\numberwithin{equation}{section}
\chardef\bslash=`\\ 
\def\verbatim{\interlinepenalty\@M \@verbatim 
  \leftskip\@totalleftmargin\advance\leftskip2pc 
  \frenchspacing\@vobeyspaces \@xverbatim} 
\newtheorem{theorem}{Theorem}[section] 
\newtheorem{corollary}[theorem]{Corollary} 
\newtheorem{lemma}[theorem]{Lemma} 
\newtheorem{proposition}[theorem]{Proposition}
\theoremstyle{definition} 
\newtheorem{definition}[theorem]{Definition} 
\newtheorem{remark}[theorem]{Remark} 
\theoremstyle{main}
\newtheorem{main}{Theorem}
\theoremstyle{maincorollary}
\newtheorem{maincorollary}{Corollary} 
\newcounter{picture} 
\DeclareMathOperator{\conv}{conv} 
\newcommand{\FF}{{\mathbb F}} 
\newcommand{\KK}{{\mathbb K}}
\newcommand{\QQ}{{\mathbb Q}} 
\newcommand{\RR}{{\mathbb R}} 
\newcommand{\ZZ}{{\mathbb Z}} 
\newcommand{\cE}{{\mathcal E}} 
\newcommand{\cF}{{\mathcal F}}
\newcommand{\cS}{{\mathcal S}} 
\newcommand{\cT}{{\mathcal T}} 
\newcommand{\G}{{\Gamma}} 
\newcommand{\Om}{{\Omega}} 
\newcommand{\s}{{\sigma}}
\newcommand{\fS}{{\mathfrak S}}
\newcommand{\fX}{{\mathfrak X}}
\newcommand{\PGL}{{\text{\rm{PGL}}}}
\newcommand{\tq}{{\text{\rm{III}}}_{1/q}}
\newcommand{\tqs}{{\text{\rm{III}}}_{1/q^2}}
\newcommand{\pgl}{{\text{\rm{PGL}}}}
\begin{document} 
 
\def\frX        {{\mathfrak X}} 
\def\by         {{\bf y}} 
\def\bone       {{\bf 1}} 
\def\Proof      {{\it Proof}} 
\def\Aut        {\hbox{Aut}} 
\def\superset   {\supset} 
\def\bb         {{\bf b}} 
\def\GL         {{\text {\rm GL}}} 
\def\fS         {\cS} 
\def\type   {{\text {\rm type }}}

\title[]{Type III actions on boundaries of $\tilde A_n$ buildings}

\date{January 22, 2001}
\author{Paul Cutting}
\author{Guyan Robertson }
\address{Mathematics Department, University of Newcastle, Callaghan, NSW 
2308, Australia}
\email{cutting@maths.newcastle.edu.au}
\email{guyan@maths.newcastle.edu.au}
\subjclass{Primary 46L80; secondary 58B34, 51E24, 20G25.}
\keywords{}
\thanks{This research was supported by The University of Newcastle} 
\thanks{ \hfill Typeset by  \AmS-\LaTeX}

\begin{abstract}
Let $\Gamma$ be a group of type rotating automorphisms of a building $\fX$ of type $\tilde A_n$ and order $q$. Suppose that $\G$ acts freely and transitively on the vertex set of $\fX$. Then the action of $\Gamma$ on the boundary of $\fX$ is ergodic, of type $\tq$ or type
$\tqs$ depending on whether $n$ is odd or even.
 
\end{abstract}      

\maketitle

\section*{Introduction} 
Let $M$ be a compact Riemannian manifold of negative sectional curvature, and let $\Gamma=\pi\sb 1(M)$. Then
$\Gamma$ acts on the sphere at infinity $S$ of the universal cover $\tilde M$ of $M$. The main result of \cite{Spa} is that the action of $\Gamma$
on $S$ is ergodic, amenable and type ${\text{\rm{III}}}_1$. This applies in particular to a cocompact Fuchsian group in $G=\pgl(2,\RR)$ acting on the circle.

A discrete analogue of this result was proved in \cite{RR1}.
Namely, let $\G$ be a free group acting simply transitively on the vertices of a locally finite homogeneous tree $\cT$ of degree $q+1$. Then $\cT$ is the universal covering space of a graph with fundamental group $\G$. It was shown in \cite{RR1} that the action of $\G$ on the boundary of the tree is ergodic, amenable and of type $\tq$.

Turning to higher rank spaces of nonpositive curvature, it is known that if $\Gamma$ is a lattice in
$G=\pgl(n+1,\RR)$ with $n\ge1$ and if $\Omega= G/B$ where $B$ is the Borel subgroup of upper
triangular matrices in $G$, then the action of $\Gamma$ on $\Omega$ is ergodic of type~${\text{\rm{III}}}_{1}$. Here $\Omega$ is the maximal boundary of Furstenberg \cite[VI.1.7]{mar}. A similar result holds more generally for a lattice $\Gamma$ in any semisimple noncompact Lie group $G$ \cite[4.3.15]{Zim}.

The discrete analogue of this construction is obtained by replacing $\RR$ by a nonarchimedean local field $\FF$ with residue field of order $q$.
The affine Bruhat-Tits building $\fX$ of $G=\PGL(n+1,\FF)$ is a building of type $\tilde A_n$ \cite{Steg}. 
The vertex set of $\fX$ may be identified with the homogeneous space $G/K$, where $K$ is a maximal compact subgroup of $G$, and $G$ acts on the boundary $\Omega= G/B$, where $B$ is a Borel subgroup of $G$. 

The precise higher rank analogue of the setup in \cite{RR1} is as follows. Let $\Gamma$ be a group of type rotating automorphisms of a building $\fX$ of type $\tilde A_n$, and suppose that $\Gamma$ acts simply transitively on the vertices of $\fX$. In view of the fact that $\tilde A_1$ buildings are trees, such groups $\Gamma$ should be regarded as higher rank analogues of free groups.
Note however that not every $\tilde A_2$ building $\fX$ is the Bruhat-Tits building of $\pgl\left(3,\KK\right)$ where $\KK$ is a local field~\cite[II \S8]{CMSZ}. Geometrically, an $\tilde A_n$ building $\fX$ is an $n$-dimensional contractible simplicial complex in which each codimension one simplex lies on $q+1$ maximal simplices ({\em chambers}). If $n\ge 2$ then the number $q$ is necessarily a prime power and is referred to as the {\em order} of the building.  The boundary $\Om$ of $\fX$ is a totally disconnected compact Hausdorff
space and is endowed with a natural family of mutually absolutely continuous Borel probability measures.  In \cite{RR} it was proved that, if $n=2$ and  $q\geq 3$, then the action of $\Gamma$ on $\Omega$ is ergodic and of type~$\tqs$. The purpose of the present article is to remove both these hypotheses and prove the following general result.

\begin{main}\label{main}
Let $n\ge 2$ and let $\fX$ be a locally finite thick $\tilde A_n$ building of order
$q$. Let
$\Gamma$ be a group of type rotating automorphisms of $\fX$ which acts simply and transitively on the vertices of $\fX$.  Then the action of $\Gamma$ on the boundary $\Omega$ of $\fX$ is amenable, ergodic and of $\type III_{\lambda}$, where
\begin{equation*}
\lambda = 
\begin{cases}
    1/q &  \text{if $n$ is odd},\\
    1/{q^2} &  \text{if $n$ is even}.
\end{cases}
\end{equation*}
\end{main}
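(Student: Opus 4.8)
The plan is to read off the type from the Krieger ratio set of the Radon--Nikodym cocycle of the $\G$-action, and to reduce the odd/even dichotomy to an elementary computation in the coweight lattice of $A_n$. \emph{Setup and the cocycle.} Fix a base vertex $o$. I would first recall that $\Om$ carries a family of mutually absolutely continuous probability measures $\{\nu_x\}$ indexed by the vertices $x$ of $\fX$, characterised by the equivariance $g_*\nu_x=\nu_{gx}$ for $g\in\aut(\fX)$ and by the cocycle formula $\tfrac{d\nu_x}{d\nu_y}(\w)=q^{\langle 2\rho,\,\beta_\w(x,y)\rangle}$, where $\beta_\w(\cdot,\cdot)$ is the combinatorial (vector-valued) Busemann cocycle with values in the coweight lattice $P^{\vee}$ of $A_n$ and $2\rho$ is the sum of the positive roots. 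With $\mu=\nu_o$ this yields $\tfrac{d g^{-1}_*\mu}{d\mu}(\w)=q^{\langle 2\rho,\,\beta_\w(go,o)\rangle}$. Because $\G$ is type rotating, the assignment $c(g)=\tau(gv)-\tau(v)\in\ZZ/(n+1)\ZZ$ is a well-defined homomorphism, surjective since $\G$ is transitive on all vertex types; hence the coweights $\beta_\w(go,o)$ meet every residue class of $P^{\vee}/Q^{\vee}$.

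\emph{Amenability and ergodicity.} These I would obtain by generalising the rank-two arguments of \cite{RR}. For amenability the point is that the $\G$-orbit equivalence relation on $(\Om,\mu)$ coincides, modulo null sets, with the tail equivalence relation of the Markovian description of $\Om$ by sectors based at $o$; this relation is hyperfinite, so the action is amenable by Connes--Feldman--Weiss. Ergodicity follows from the same identification: the tail relation is ergodic by a Kolmogorov $0$--$1$ law applied to the cylinder structure of $\Om$, and mutual absolute continuity of the $\nu_x$ transports this to the $\G$-action.

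\emph{Computation of the type.} The heart of the matter is the essential range $\Lambda\subset\RR$ of the additive cocycle $(g,\w)\mapsto(\log q)\,\langle 2\rho,\beta_\w(go,o)\rangle$: the action is of type $\mathrm{III}_{e^{-s}}$ precisely when $\Lambda=s\,\ZZ$. For the inclusion $\Lambda\subseteq(\log q)\,d_n\ZZ$ I note that $\beta_\w(go,o)\in P^{\vee}$, so every value lies in $(\log q)\langle 2\rho,P^{\vee}\rangle=(\log q)\,d_n\ZZ$, where $d_n=\gcd\{\langle 2\rho,\varpi_k\rangle:1\le k\le n\}$ and $\varpi_k$ is the $k$-th fundamental coweight. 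The standard evaluation $\langle 2\rho,\varpi_k\rangle=k(n+1-k)$ then gives $d_n=1$ for $n$ odd (already $\langle 2\rho,\varpi_1\rangle=n$ is odd, and $\gcd(n,2(n-1))=\gcd(n,2)=1$) and $d_n=2$ for $n$ even (every $k(n+1-k)$ is then even while $\gcd(n,2(n-1))=2$). For the reverse inclusion I would exhibit $(\log q)\,d_n$ as a genuine essential value: writing $d_n=\sum_k m_k\,k(n+1-k)$ and using simple transitivity to place vertices $go$ at the prescribed coweight positions in the direction of $\w$, I produce, for an arbitrary set $A$ of positive measure, elements $g\in\G$ carrying a positive-measure subset of $A$ back into $A$ with Radon--Nikodym derivative $q^{d_n}$. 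Since $\Lambda$ is a closed subgroup of $(\log q)\,d_n\ZZ$ containing $(\log q)\,d_n$, it equals $(\log q)\,d_n\ZZ$, so $\lambda=q^{-d_n}$ and the action is of type $\tq$ for $n$ odd and of type $\tqs$ for $n$ even.

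\emph{Main obstacle.} The decisive difficulty is the reverse inclusion just sketched, which fuses geometry with ergodic recurrence. One must (i) pin down the boundary measure on cylinder sets precisely enough, via retractions onto sectors, to justify the cocycle formula and the range of its values; and, more seriously, (ii) promote the arithmetic relation $d_n=\sum_k m_k\,k(n+1-k)$ to a measurable recurrence, simultaneously steering the displacement $\beta_\w(go,o)$ to have the prescribed $2\rho$-pairing and returning a chunk of $A$ into $A$. Making these two demands compatible---so that the subgroup generated by the cocycle values is actually \emph{attained} as the essential range rather than merely containing it---is the technical core; the remaining steps are bookkeeping with the Busemann cocycle and the Krieger invariant.
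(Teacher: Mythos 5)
Your framework is the right one and your arithmetic agrees exactly with the paper's: the Radon--Nikodym exponent $\langle 2\rho,\varpi_k\rangle=k(n+1-k)$ is the paper's $i(n+1-i)$, and the computation $\gcd\{k(n+1-k)\}= \gcd(n,2(n-1))=1$ or $2$ according to the parity of $n$ is precisely how Proposition~\ref{rn8} passes from the generators of the ratio set to $\lambda=q^{-1}$ or $q^{-2}$. But the proposal has a genuine gap, and it is exactly the one you flag yourself: both the ergodicity of $\Gamma$ and the \emph{attainment} of $q^{k(n+1-k)}$ as essential values are asserted, not proved. Your route to ergodicity rests on the claim that the $\Gamma$-orbit equivalence relation coincides mod null sets with the tail relation on sectors; that coincidence is not a formality --- it is the content of Lemma~\ref{E8}, where, given cylinder sets $\Omega_1^x$ and $\Omega_1^y$ with $x,y\in S_k$, an element of the full group carrying one onto the other is built by an explicit exhaustion (defining the map on a fixed proportion $1/K$ of the set at each stage), and the construction in turn needs a combinatorial fact about the projective geometry $\Pi$ (the existence of $z\in\Pi_n$ with $u_l^{-1}\vee z=\Pi$ and $z^{-1}\vee v_n=\Pi$, via a counting argument). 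Nothing in your sketch substitutes for this.

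The same omission undermines the type computation. ``Using simple transitivity to place $go$ at the prescribed coweight position in the direction of $\omega$'' produces the right Radon--Nikodym value, but for the Krieger ratio set you must, for an \emph{arbitrary} positive-measure $\mathcal E$, return a positive-measure piece of $\mathcal E$ into $\mathcal E$ while holding that value fixed; a single translation $g$ cannot be steered to do both. The paper resolves the tension by first extracting a countable \emph{measure-preserving} ergodic subgroup $K_0\le[\Gamma]_0$ (Lemmas~\ref{E1}, \ref{E2}, \ref{E8} and Proposition~\ref{E5}) and then taking $t=k_2g_i^{-1}k_1$ with $k_1,k_2\in K_0$: the $k_j$ do the recurrence at no cost to the derivative, and $g_i^{-1}$ contributes exactly $q^{i(n+1-i)}$ on $\Omega_O^{g_iO}$. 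This single construction is what makes ergodicity (via Lemma~\ref{E6}), the ratio-set computation, and hence the whole theorem go through; until you supply it (or an equivalent), your argument establishes only the inclusion $r(\Gamma)\setminus\{0\}\subseteq\{q^{d_nk}:k\in\ZZ\}$, not equality. (Amenability, which you route through Connes--Feldman--Weiss, also leans on the unproven identification of relations; the paper instead cites the direct generalisation of the rank-two argument of Ramagge--Robertson.)
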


The proof of this result will be completed in Section \ref{classification}. In Section \ref{freeness} we deal with freeness of the action, which is required in order to prove that the associated von Neumann algebra is a factor. In particular, Section \ref{freeness} removes a gap in the proof of freeness in \cite{RR}. We therefore obtain the following consequence.

\begin{maincorollary}
Let $\Gamma$ and $\Omega$ be as above. Then the crossed product von Neumann algebra
$L^\infty(\Omega)\rtimes\Gamma$ is the AFD factor of $\type III_{\lambda}$, where
$\lambda = 1/q$ if $n$ is odd, and 
$\lambda = 1/{q^2}$ if $n$ is even.
\end{maincorollary}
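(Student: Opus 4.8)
The plan is to deduce the corollary from \thmref{main} together with the standard structure theory of the group measure space construction, once the action is known to be essentially free. Freeness is exactly what Section~\ref{freeness} provides; granting it, for any measure $\mu$ in the given mutually absolutely continuous class the action of $\Gamma$ on $(\Omega,\mu)$ is free, nonsingular and ergodic, and by the Feldman--Moore theory the crossed product $L^\infty(\Omega)\rtimes\Gamma$ is canonically isomorphic to the von Neumann algebra $W^*(\mathcal{R})$ of the orbit equivalence relation $\mathcal{R}$. Since the crossed product depends only on the measure class, the statement does not depend on the choice of $\mu$.

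First I would record factoriality. Freeness places $L^\infty(\Omega)$ inside the crossed product as a maximal abelian (Cartan) subalgebra, so the centre is contained in $L^\infty(\Omega)$ and therefore equals the fixed-point algebra $L^\infty(\Omega)^\Gamma$; since $\Gamma$ acts ergodically by \thmref{main}, this is just $\CC$. This is the classical Murray--von Neumann conclusion that a free ergodic nonsingular action yields a factor.

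Next I would identify the type. For a free nonsingular ergodic action the type of the action coincides, by Krieger's theorem, with the type of $W^*(\mathcal{R})$; concretely, the asymptotic ratio set of $\mathcal{R}$ equals Connes' invariant $S\bigl(L^\infty(\Omega)\rtimes\Gamma\bigr)$. Thus the assertion that the factor is of $\type III_\lambda$ is, via the identification above, exactly the type computation carried out in \thmref{main}, with $\lambda=1/q$ for $n$ odd and $\lambda=1/q^2$ for $n$ even. Approximate finite dimensionality then follows from amenability of the action, which \thmref{main} also furnishes: by Zimmer's theorem an amenable action has injective crossed product, and by Connes' characterization injectivity is equivalent to hyperfiniteness, so $L^\infty(\Omega)\rtimes\Gamma$ is AFD.

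Finally, to justify the definite article, I would invoke the Connes--Krieger uniqueness theorem: for each $\lambda\in(0,1)$ there is, up to isomorphism, a unique AFD factor of $\type III_\lambda$. Specialising to $\lambda=1/q$ and $\lambda=1/q^2$ completes the proof. I expect no substantial obstacle within the corollary itself---every remaining step is a direct appeal to a standard structure theorem---so the only delicate ingredient is the freeness established in Section~\ref{freeness}, which is precisely the point at which the gap in \cite{RR} is repaired.
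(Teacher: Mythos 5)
Your proposal is correct and follows essentially the same route the paper intends: the corollary is derived from Theorem~\ref{main} (ergodicity, amenability, and the type computation) together with the measure-theoretic freeness established in Section~\ref{freeness}, and then the standard Krieger--Connes structure theory (factoriality from free ergodic actions, hyperfiniteness from amenability, and uniqueness of the AFD $\type III_\lambda$ factor). The paper gives no separate argument beyond citing these ingredients, so your write-up simply makes explicit the chain of standard theorems the authors leave implicit.
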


A simple variation on the arguments leading to Theorem \ref{main} proves the following result: see subsection \ref{classification_subsection}.

\begin{main}\label{main*}
Let $p\ge 2$ be a prime number, let $n\geq 1$, and let  $\Omega$ be the boundary of the affine building of $\PGL(n+1,\QQ_p)$. That is $\Omega=\PGL(n+1,\QQ_p)/B$, where $B$ is the Borel subgroup of upper triangular matrices.
Then the action of $\PGL(n+1,\QQ)$ on $\Omega$ is  ergodic and of $\type III_{\lambda}$, where
\begin{equation*}
\lambda = 
\begin{cases}
    1/p &  \text{if $n$ is odd},\\
    1/{p^2} &  \text{if $n$ is even}.
\end{cases}
\end{equation*}
\end{main}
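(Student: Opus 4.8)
The plan is to run the framework behind \thmref{main} with the simply transitive group replaced by the dense subgroup $\PGL(n+1,\QQ)$ of $G=\PGL(n+1,\QQ_p)$. Since the residue field of $\QQ_p$ is $\FF_p$, the Bruhat--Tits building $\fX$ of $G$ is a thick $\tilde A_n$ building of order $q=p$, and its boundary is $\Omega=G/B$. I would equip $\Omega$ with the quasi-invariant measure class of the $K$-invariant probability measure $\mu$, where $K=\PGL(n+1,\ZZ_p)$ stabilises the base vertex; this is the same measure family used for \thmref{main}. As there, the type is determined by the Krieger ratio set, that is, by the essential values of the Radon--Nikodym cocycle $c(g,\omega)=\frac{d\,g^{-1}\mu}{d\mu}(\omega)$, so the two points to settle are ergodicity of the $\PGL(n+1,\QQ)$-action and the precise range of $c$.

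For ergodicity I would not invoke transitivity on vertices, which now fails, but pass to the $S$-arithmetic lattice. The group $\Lambda=\PGL(n+1,\ZZ[1/p])$ embeds diagonally as an irreducible lattice in $\PGL(n+1,\RR)\times G$, and $\Omega=\bigl(\PGL(n+1,\RR)\times G\bigr)\big/\bigl(\PGL(n+1,\RR)\times B\bigr)$. The stabiliser $\PGL(n+1,\RR)\times B$ is noncompact, so Moore's ergodicity theorem \cite{Zim} shows that $\Lambda$ alone acts ergodically on $\Omega$; since $\Lambda\subseteq\PGL(n+1,\QQ)$, the larger group is ergodic a fortiori. This replacement of cocompactness by an arithmetic lattice is where the passage from the simply transitive setting really bites, and I expect it to be the main obstacle.

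For the cocycle, the key is that $c(g,\cdot)$ is cohomologous to the modular function of $B$ evaluated on the Iwasawa torus component, so its values always lie in $p^{\ZZ}$. Writing the positive roots of $A_n$ as $e_i-e_j$ with $0\le i<j\le n$, a diagonal element $t=\mathrm{diag}(p^{a_0},\dots,p^{a_n})\in\PGL(n+1,\QQ)$ contributes $p^{-\sum_{i<j}(a_i-a_j)}=p^{-\sum_k (n-2k)a_k}$. The attainable exponents thus form the subgroup of $\ZZ$ generated by the integers $n-2k$, whose gcd is $1$ for $n$ odd and $2$ for $n$ even; and \emph{every} value of $c$ has exponent in this subgroup, because $n-2k$ is even exactly when $n$ is. With ergodicity in hand this identifies the ratio set as $\{0\}\cup p^{\ZZ}$ when $n$ is odd and $\{0\}\cup p^{2\ZZ}$ when $n$ is even, giving the asserted $\type III_{1/p}$ or $\type III_{1/p^2}$. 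The parity dichotomy is therefore the same phenomenon as in \thmref{main}, here read off directly from the root sum $\sum_k(n-2k)a_k$.
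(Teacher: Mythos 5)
Your cocycle computation is equivalent to the paper's: the modular-function exponent $\sum_k(n-2k)a_k$ is, after the change of coordinates $m_i=a_i-a_{i-1}$, exactly the exponent $-\sum_i i(n+1-i)m_i$ of Lemma \ref{rn6}, and the parity/gcd analysis matches the paper's observation that $\gcd\{i(n+1-i)\}$ is $1$ for $n$ odd and $2$ for $n$ even. Your ergodicity argument, however, is a genuinely different (and heavier) route: the paper does not use Moore's theorem but shows, via \cite[Proposition 3.1]{Steg} and the density of $\ZZ$ in $\ZZ_p$, that the vertex stabiliser $\PGL(n+1,\ZZ)$ acts transitively on every sphere $S_k(O)$, whence it is ergodic by Lemma \ref{E1}. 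That elementary argument buys something your argument does not: $\PGL(n+1,\ZZ)$ \emph{preserves} the measure $\nu_O$, and this is precisely the ingredient the paper needs next.

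This is where your proposal has a real gap. The step ``with ergodicity in hand this identifies the ratio set as $\{0\}\cup p^{\ZZ}$ (resp.\ $\{0\}\cup p^{2\ZZ}$)'' is not valid: ergodicity of the action together with the fact that the Radon--Nikodym cocycle takes values in a subgroup $p^{d\ZZ}$ gives only the \emph{upper} bound $r(\Gamma)\setminus\{0\}\subseteq p^{d\ZZ}$. To show a value $\lambda=p^{d}$ is an \emph{essential} value one must, for every Borel set $\cE$ of positive measure, produce $g\in\Gamma$ and $\cF\subseteq\cE$ of positive measure with $g\cF\subseteq\cE$ and $d\nu\circ g/d\nu\approx\lambda$ on $\cF$; an ergodic action whose cocycle ranges in $p^{\ZZ}$ can perfectly well be of type ${\text{\rm{III}}}_0$. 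The mechanism of Proposition \ref{rn8} (and of \cite[Proposition 4.4]{RR}, which the paper invokes here) supplies this by sandwiching a boundary translation between two elements $k_1,k_2$ of a countable \emph{measure-preserving} ergodic subgroup, so that the return to $\cE$ costs nothing in the Radon--Nikodym derivative. Your Moore-theorem argument yields ergodicity of $\Lambda=\PGL(n+1,\ZZ[1/p])$, which does not preserve $\mu$ and so cannot play this role. The fix is cheap and is essentially the paper's proof: $\PGL(n+1,\ZZ)$ is dense in the compact group $\PGL(n+1,\ZZ_p)$, which acts transitively on $\Omega$ preserving $\mu$; hence $\PGL(n+1,\ZZ)$ is an ergodic measure-preserving subgroup of $\PGL(n+1,\QQ)$, and the essential-value argument then goes through. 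Once you add that, the Moore-theorem detour for ergodicity becomes unnecessary.
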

Similar results can be stated for linear groups over other local fields, but this is perhaps the most striking case.
Note that, in contrast to Theorem \ref{main}, $\PGL(n+1,\QQ)$ is not a lattice in $\PGL(n+1,\QQ_p)$, and its action on the boundary is not amenable.

Given an~$\tilde A_n$ building~$\fX$, there is a type map~$\tau$ defined on the
vertices of~$\fX$ such that $\tau(v)\in\ZZ/(n+1)\ZZ$ for each vertex $v\in\fX$.
Every chamber of $\fX$ has precisely one vertex of each type.
An automorphism~$\alpha$ of~$\mathfrak X$ is said to be {\em type-rotating} if
there exists $i\in\{0,1,\ldots,n\}$ such that $\tau(\alpha v)=\tau(v)+i$ for all
vertices $v\in\mathfrak X$. An $\tilde A_1$ building is a tree, with two types of vertices,
and every automorphism of the tree is type rotating. We shall refer to a group $\Gamma$ satisfying the hypotheses of Theorem 
\ref{main} as an $\tilde A_n$ group.  
In \cite{C95} it was shown
that there is a 1-1 correspondence between $\tilde A_n$ groups and  ``triangle presentations''.  The right Cayley graph of an $\tilde A_n$ group $\Gamma$ relative to a natural set of generators is the $1$-skeleton of the $\tilde A_n$ building $\fX$. We shall frequently refer  to \cite{C99}, which lays much of the groundwork for dealing with the higher rank $\tilde A_n$ buildings.  

Throughout the paper $\fX$ will denote a thick, locally finite $\tilde A_n$ building, 
and the vertices of the building will be denoted by 
$\fX^0$.  If $\fX$ is associated with the $\tilde A_n$ group $\Gamma$  then the underlying 
set of the group $\Gamma$ will be identified with $\fX^0$, and the action of $\Gamma$ on the building will be by left
multiplication.  The identity of $\Gamma$ will be denoted by $1$ throughout.
For $x$, $y\in\fX^0$, $d(x,y)$ will denote the graph distance between 
those vertices in the 1-skeleton of $\fX$, and $|x|=d(x,1)$.

Further information on buildings can be found in \cite{C97}, \cite{Steg},
\cite{Brown} and \cite{Ronan}.  The first two of these references are introductory, 
while the last two provide a fuller account of the theory of buildings.

\section{Preliminaries}

This section mainly recalls material from \cite{C99}, to which we refer for a more complete discussion.  An $\tilde A_n$ building is a union of {\em apartments}.
An apartment is isomorphic to a Coxeter 
complex  of type $\tilde A_n$.    Let
$\Sigma$ denote the Coxeter complex of type $\tilde A_n$.
The $n$-simplices of $\Sigma$ are referred to as {\em chambers} and can be regarded as forming a tessellation of $\RR^n$.
The vertex set of $\Sigma$ 
can be identified with $\ZZ^{n+1}/\ZZ(1,1,\ldots,1)$. 
Two vertices $[a], [b]\in\Sigma$, $[a]=a+\ZZ(1,1,\ldots,1)$  
and $[b]=b+\ZZ{(1,1,\ldots,1)}$, are adjacent if there exist  
representative vectors 
$(a_1,a_2,\ldots,a_{n+1})\in[a]$ and $(b_1,b_2,\ldots,b_{n+1})\in[b]$ such  
that $a_i\leq b_i\leq a_i+1$ for all $1\leq i\leq n$. 
The type $\tau(x)\in \ZZ/(n+1)\ZZ$ of a vertex $[x]=[(x_1,x_2,\ldots,x_{n+1})]\in \Sigma$ is given by 
\[ 
\tau(x)=\left(\sum_i x_i\right)\mod(n+1)\,. 
\] 
Each chamber of $\Sigma$ 
has precisely one vertex of each
type.  
 
Let $\bb_i=(0,\ldots,0,1,\ldots,1)$, where  precisely 
$i$ entries equal $1$.  Note that each 
$x\in\ZZ^{n+1}$ can be written as 
\[
x=x_1{(1,1,\ldots,1)}+\sum(x_{i+1}-x_i){\bf b}_i.
\]
Hence there is a mapping  
$\ZZ^{n+1}/\ZZ{(1,1,\ldots,1)}\to\ZZ^{n}$ defined by  
\[
[x]\mapsto (x_{2}-x_1, x_3-x_2,\ldots,x_{n+1}-x_n).  
\]
This mapping is a 
canonical group homomorphism between $\ZZ^{n+1}/\ZZ{(1,1,\ldots,1)}$ and $\ZZ^n$, and by means of it the vertices of $\Sigma$ can be coordinatized by $\ZZ^n$.
Throughout this paper, if $k\in\ZZ^n$, then $k_i$ denotes the $i^{{\text{\it th}}}$
entry of $k$.
 
\subsection{The Boundary of an $\tilde A_n$ Building} 

Given an $\tilde A_n$ building $\fX$, one can define the boundary
of $\fX$ by means of equivalence classes of sectors. 
The central concern of this paper 
is the boundary regarded as a measure space.  For a discussion of the geometric structure of the boundary, the reader is referred to \cite[Chapter 9, 10]{Ronan}. 
 
Let $\cS_0$ be the simplicial cone  in the $\tilde A_n$ Coxeter complex $\Sigma$ with vertex  set coordinatized by $\ZZ_+^n$.
A subcomplex $S$ of $\fX$ is called a sector if there is an  
apartment $A$ containing $S$ and a type-rotating isomorphism $\phi:A\to\Sigma$ 
such that $\phi(S)\mapsto \cS_0$.  (Recall that the isometry $\phi$ is said to be {\em type rotating} if there exists $j\in \ZZ / (n+1)\ZZ$ such that, for each vertex $v$ of $S$, $\tau(\phi(v))=\tau(v)+j \pmod {n+1}$.
Note that if $a$, $b$ are vertices in a sector $S$ of $\fX$, and $\phi:S\to\cS_0$ is a type 
preserving isomorphism 
 such that $\phi(a)=(0,0,\ldots,0)$ and  
$\phi(b)=(k_1,k_2,\ldots,k_n)$, where $k_i\in\ZZ_+$, 
then the $k_i$ do not depend on the particular apartment $A$ 
containing $S$ \cite[Lemma 2.3]{C99}.  Thus, for $x\in\fX^0$ 
and $k\in\ZZ_+^n$, one can
define a set $S_k(x)$ consisting of those elements $y\in\fX^0$ such
that there exists a sector $S$ containing $x$ and $y$,
and a type rotating isomorphism
$\phi:S\to\cS_0$ such that $\phi(x)=(0,0,\ldots,0)$ and  $\phi(y)=k$.
Given a sector $S$ and a type rotating isomorphism with $\phi(S)=\cS_0$, 
the basepoint of $S$ is $v=\phi^{-1}(0,0,\ldots,0)$.
If $x,y\in \fX$ and $y\in S_k(x)$, with
$k=(k_1,k_2,\ldots,k_n)\in\ZZ_+^n$, then the graph distance between $x$ and $y$ is given by $d(x,y)=\sum_i k_i$.

Two sectors $S_1$, $S_2$ are said to be {\it equivalent} if  $S_1\cap S_2$  
contains a subsector.  Let $\Omega$ be the set of all such  
equivalence classes of sectors.  Then $\Omega$ is called the 
{\it boundary} of $\fX$.  Given $\omega\in\Omega$ and $x\in\fX^0$, there 
exists an unique sector with basepoint $x$ which is contained 
in the equivalence class $\omega$ \cite[Lemma 9.7]{Ronan}.  Denote 
this sector by $[x,\omega)$.  Also, for $m\in\ZZ_+^n$, let
$s^x_m(\omega)$ be the unique element in
the intersection $S_m(x)\cap [x,\omega)$. 
What Follows Is Based On \cite{C99}.

\begin{lemma}\label{rn4} 
Let $\omega\in\Omega$, and let $x$, $y\in\fX^0$.   
Then there exists ${m}(x,y;\omega)\in\ZZ^n$
such that  
$$s^x_k(\omega)=s^y_{k'}(\omega)\qquad {\rm where} \quad k'=k+m(x,y;\omega)\,,$$ 
for all $k\in\ZZ_+$ such that $k_i+m_i(x,y;\omega)\geq 0$ for 
$1\leq i\leq n$.
\end{lemma}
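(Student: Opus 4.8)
The plan is to read $m(x,y;\omega)$ as the constant offset between the coordinatizations that $[x,\omega)$ and $[y,\omega)$ induce on the region where they overlap, and to deduce the identity from the fact that two sectors representing the same class $\omega$ eventually coincide.

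First I would invoke the existence and uniqueness of the sectors $[x,\omega)$ and $[y,\omega)$ based at $x$ and $y$ in the class $\omega$ (\cite[Lemma 9.7]{Ronan}, already quoted). Since both represent the single class $\omega$, the definition of $\Omega$ gives that $[x,\omega)\cap[y,\omega)$ contains a subsector; let $z$ be its basepoint, so that the common subsector is the unique sector $[z,\omega)$ at $z$ in the class $\omega$, with $[z,\omega)\subseteq[x,\omega)\cap[y,\omega)$. Recording the position of $z$ in each sector, I would write $z=s^x_a(\omega)$ and $z=s^y_b(\omega)$ with $a,b\in\ZZ_+^n$, and set $m(x,y;\omega)=b-a\in\ZZ^n$.

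The heart of the argument is the claim that restricting the coordinatization of a sector to a subsector merely translates the $\ZZ_+^n$-coordinates by the coordinate of the subsector's basepoint: for every vertex $p=s^z_t(\omega)$ of $[z,\omega)$, with $t\in\ZZ_+^n$, one has $p=s^x_{a+t}(\omega)=s^y_{b+t}(\omega)$. Here I would use \cite[Lemma 2.3]{C99}: the sector coordinates $k_i$ of a vertex relative to a basepoint are intrinsic, independent of the apartment and of the type-rotating isomorphism onto $\cS_0$ used to read them off. Consequently the type-rotating identification of $[z,\omega)$ with $\cS_0$ obtained by restricting the one for $[x,\omega)$ agrees, up to the translation by $a$ forced by the location of $z$, with the intrinsic coordinatization of $[z,\omega)$, and similarly on the $y$-side with translation $b$. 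Writing $k=a+t$ and $k'=b+t=k+m$ then gives $s^x_k(\omega)=p=s^y_{k'}(\omega)$, the asserted identity for all $k$ whose corresponding vertices lie in the common subsector, i.e. precisely the range in which both $s^x_k(\omega)$ and $s^y_{k+m}(\omega)$ are defined and point into the overlap.

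The step I expect to be the main obstacle is pinning down that this offset is a single vector independent of $p$, given that $x$ and $y$ may sit in different apartments and the two sectors agree only far out towards $\omega$. I would handle this by carrying out the entire comparison inside one apartment $A$ containing $[z,\omega)$, coordinatizing $A$ by $\ZZ^n$ as in the preliminaries, and then using the apartment-independence of the sector coordinates to transport the resulting equalities back to $[x,\omega)$ and $[y,\omega)$. Finally I would verify that $m(x,y;\omega)$ is well defined, independent of the chosen common subsector: any two common subsectors contain a third, on which both prescriptions return the same $b-a$, so the value is unambiguous.
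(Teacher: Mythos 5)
Your proposal is correct and follows essentially the same route as the paper: pick a point (your $z$, the paper's $u$) in the common subsector guaranteed by the equivalence of $[x,\omega)$ and $[y,\omega)$, record its two coordinate vectors $a,b$, use the uniqueness of the sector based at that point in the class $\omega$ (\cite[Lemma 9.7]{Ronan}) to identify the two subsectors, and conclude that coordinates differ by the fixed translation $m=b-a$. Your explicit justification of the translation step via the intrinsic coordinates of \cite[Lemma 2.3]{C99}, and your check that $m$ is independent of the chosen subsector, are exactly the points the paper treats more tersely.
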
 
 
\Proof. (c.f. \cite[Lemma 2.1]{CMS}.) Since $[x,\omega)$ is in the same equivalence class as $[y,\omega)$,  
$[x,\omega)\cap [y,\omega)$ contains a subsector.  Choose  
$$u=s_k^x(\omega)=s_{k'}^y(\omega)\in [x,\omega)\cap [y,\omega)\,.$$ 
Let $T=\{s^x_{k+l}(\omega);l\in\ZZ_+^n\}$, and 
$T'=\{s^y_{k'+l}(\omega);l\in\ZZ_+^n\}$. 
Then $T$, $T'$ are sectors in the equivalence 
class $\omega$ with a common base point, 
and so by \cite[Lemma 9.7]{Ronan}, $T=T'$.  It 
follows that $s^x_{k+l}(\omega)$, $s^y_{k'+l}(\omega)$ are both 
in $S_l^u(\omega)\cap T$, and hence are equal.  Thus 
${m}(x,y;\omega)=k'-k$, and $m(x,y;\omega)$ is clearly independent of 
the choice of $u\in[x,\omega)\cap[y,\omega)$. \qed 

\begin{lemma}\label{P3} 
Let $x$ be a vertex of $\fX$, and let $C$ be a chamber containing 
$x$.  Then for $\omega_0\in\Omega$, there exists an apartment  
$A$ which contains $C$ and the sector $S=[x,\omega_0)$. 
\end{lemma}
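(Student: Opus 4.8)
The plan is to prove the statement in two stages: first produce an apartment $A$ containing the chamber $C$ together with \emph{some} subsector of $S=[x,\omega_0)$, and then use the convexity of apartments to promote this to an apartment containing the whole of $S$. The point of this splitting is that the existence of an apartment containing a chamber and a subsector of a given sector is a standard fact of building theory, whereas the behaviour near the basepoint $x$ (where $C$ and $S$ meet) is exactly what needs the extra convexity argument. Recall that $S$ itself lies in an apartment, say $A_1$, by the very definition of a sector, and that under a type rotating isomorphism $A_1\cong\Sigma$ we may take $S\leftrightarrow\cS_0$ and $x\leftrightarrow 0$.

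\textbf{Stage 1.} I claim there is a vertex $y$ of $S$ and an apartment $A$ with $C\subseteq A$ and $[y,\omega_0)\subseteq A$. For each $N\in\ZZ_+$ let $v_N=s^x_{(N,\dots,N)}(\omega_0)$ be the vertex of $S$ deep in the direction of $\omega_0$, let $D_N$ be a chamber of $S$ containing $v_N$, and use the building axiom that any two chambers lie in a common apartment to choose an apartment $A_N\supseteq C\cup D_N$. Since $\fX$ is locally finite, only finitely many chambers lie within any bounded distance of $x$, and letting $N\to\infty$ one extracts, along a subsequence, a stable initial segment of $S$ common to the $A_N$; this yields an apartment $A$ containing $C$ and a genuine subsector $[y,\omega_0)$ of $S$. (Equivalently, this is the standard statement that a chamber and a sector lie in a common apartment after passing to a subsector, and may simply be cited.)

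\textbf{Stage 2.} Apartments are convex subcomplexes of $\fX$, so $A$ contains the combinatorial convex hull of any set of its vertices. Now $x$ is a vertex of $C\subseteq A$ and the subsector $[y,\omega_0)\subseteq A$. Working in $A_1$ with $x\leftrightarrow 0$, $y\leftrightarrow k^{0}\in\cS_0$ and $[y,\omega_0)\leftrightarrow k^{0}+\cS_0$, one verifies that
\[
\conv\bigl(\{x\}\cup[y,\omega_0)\bigr)=[x,\omega_0)=S ,
\]
since for any vertex $k\in\cS_0$ one has $k'=k+k^{0}\in k^{0}+\cS_0$ with $d(0,k)+d(k,k')=\sum k_i+\sum k^{0}_i=\sum k'_i=d(0,k')$, so that $k$ lies on a minimal gallery from $x$ to a vertex of the subsector. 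Hence $S\subseteq A$, and as $C\subseteq A$ the apartment $A$ contains both $C$ and $S=[x,\omega_0)$, as required.

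The main obstacle is Stage 1: one must ensure that the apartments $A_N$ can be chosen so as to share a genuine subsector pointing toward the \emph{same} boundary point $\omega_0$, rather than merely containing the far vertices $v_N$. A single far chamber $D_N$ does not determine the direction $\omega_0$ inside $A_N$ --- its combinatorial convex hull with $x$ is only a bounded region, not a truncated cone --- so it is essential to pass to the limit $N\to\infty$ and extract a stable subsector, and this is precisely where the local finiteness of $\fX$ enters. Once the correct subsector is secured, the convexity upgrade of Stage 2 is routine.
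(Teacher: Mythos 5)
Your Stage 1 is the same step the paper takes: it simply cites Ronan, Lemma 9.4, for the existence of an apartment $A$ containing $C$ and a subsector $S'\subseteq[x,\omega_0)$. Your from-scratch version of this step is the weakest part of the write-up --- ``extracting a stable initial segment along a subsequence'' of the apartments $A_N$ and producing a limiting apartment is itself a nontrivial compactness argument (essentially the content of the cited lemma), and a single far chamber $D_N$ really does not pin down the direction $\omega_0$ inside $A_N$, as you yourself observe. Since you offer the citation as an alternative, this is not a gap, but the citation is the proof. Where you genuinely diverge from the paper is in the upgrade step. The paper notes that $x\in C\subseteq A$, takes the sector $S''$ in $A$ based at $x$ and parallel to $S'$ (parallel sectors exist at every vertex of a Euclidean apartment and share a common subsector, so $S''$ is equivalent to $[x,\omega_0)$), and concludes $S''=[x,\omega_0)\subseteq A$ from the uniqueness of the sector based at $x$ representing $\omega_0$ (Ronan, Lemma 9.7). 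You instead argue via convexity of apartments and the identity $\conv\bigl(\{x\}\cup[y,\omega_0)\bigr)=[x,\omega_0)$. Both routes work. The paper's is shorter and stays entirely at the level of sectors, needing only the two quoted lemmas; yours needs the apartment-independence of the convex hull (so that a hull computed in $A_1$ is known to lie in $A$) and, strictly speaking, should be run at the level of chambers and minimal galleries rather than vertices --- knowing that every vertex of a chamber of $S$ lies in $A$ is not literally the same as knowing the chamber itself lies in $A$, whereas gallery-convexity of apartments gives the chamber statement directly. These are standard repairs, so I would count your argument as a correct alternative rather than as containing a genuine gap.
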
 
 
\Proof.  By \cite[Lemma 9.4]{Ronan}, given the chamber $C$ and
sector $[x,\omega_0)$, there exists an apartment $A$ containing 
a subsector $S'\subset [x,\omega_0)$
and the chamber $C$.  Note that as $x\in C$, one has $x\in A$.  

Choose a sector $S''$ in $A$ with base vertex $x$ and parallel to $S'$.  Then $S''$ is equivalent to 
$[x,\omega_0)$ and so $S''=[x,\omega_0)$, by uniqueness of the sector with base vertex $x$ 
representing the boundary point $\omega_0$.
\qed

\medskip

The next lemma is a generalisation of \cite[Corollary 2.3]{CMS}.

\begin{proposition}\label{P1}
For $x$, $y\in\frX^0$, and $\omega\in\Omega$, one has
$$s^x_k(\omega)\in [x,\omega)\cap [y,\omega)$$
if $k_i\geq d(x,y)$ for $1\le i\le n$.
\end{proposition}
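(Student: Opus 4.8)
The plan is to deduce the statement from \lemref{rn4} by controlling the offset vector $m=m(x,y;\omega)$ coordinate by coordinate. Observe first that $s^x_k(\omega)\in[x,\omega)$ holds automatically whenever $k\in\ZZ_+^n$, directly from the definition of $s^x_k(\omega)$; and since $k_i\ge d(x,y)\ge 0$ is assumed, this applies here. On the other hand, \lemref{rn4} gives $s^x_k(\omega)=s^y_{k+m}(\omega)$, and the right-hand side lies in $[y,\omega)$ as soon as $k+m\in\ZZ_+^n$. Thus the desired inclusion $s^x_k(\omega)\in[x,\omega)\cap[y,\omega)$ will follow once I establish the single estimate
\[
m_i(x,y;\omega)\ge -d(x,y)\qquad(1\le i\le n),
\]
because then $k_i\ge d(x,y)$ forces $k_i+m_i\ge 0$. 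Note that comparing the graph distances $d(x,u)$ and $d(y,u)$ at a common point $u=s^x_k(\omega)=s^y_{k+m}(\omega)$ only yields $\bigl|\sum_i m_i\bigr|\le d(x,y)$ via the scalar triangle inequality; the genuine difficulty is the \emph{per-coordinate} bound, and this is the step I expect to be the main obstacle, as it is what forces the geometric argument below.

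To bring the individual coordinates under control I would reduce to a single apartment. Choose an apartment $A$ containing the whole sector $[y,\omega)$ (such $A$ exists since every sector lies in an apartment), and let $\rho$ be the retraction of $\fX$ onto $A$ centred at the boundary point $\omega$, which exists by standard building theory (cf.\ \cite{Ronan}). The two properties I need are that $\rho$ fixes $A$ pointwise and is distance non-increasing, and that $\rho$ preserves the coordinates towards $\omega$, i.e.\ $\rho(s^z_k(\omega))=s^{\rho(z)}_k(\omega)$ for every vertex $z$. Applying $\rho$ to $s^x_k(\omega)=s^y_{k+m}(\omega)$ and using that the right-hand side already lies in $A$, I obtain $s^{\rho(x)}_k(\omega)=s^y_{k+m}(\omega)$, so $m(\rho(x),y;\omega)=m(x,y;\omega)$. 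Writing $x'=\rho(x)\in A$ we also have $d(x',y)\le d(x,y)$. Hence it suffices to prove the estimate for the two vertices $x',y$ lying in the common apartment $A$.

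Finally I would compute directly in $A$. Identify $A$ with $\Sigma$ through a type-rotating isomorphism $\phi$ carrying $[y,\omega)$ onto the cone $\cS_0$; then $[x',\omega)$, being asymptotic to $[y,\omega)$ inside $A$, is carried to the parallel cone based at $x'$, and in the $\ZZ^n$-coordinates of $\Sigma$ one has $s^{v}_k(\omega)=v+k$ for any basepoint $v$. Consequently $m=x'-y$ as an element of $\ZZ^n$. Lifting $x'-y$ to a representative $c\in\ZZ^{n+1}$, the coordinatizing homomorphism gives $m_i=c_{i+1}-c_i$, while the graph distance in an $\tilde A_n$ apartment is $d(x',y)=\max_j c_j-\min_j c_j$ (independent of the lift, as one checks from the distance formula $d=\sum_i k_i$ recalled above by sorting the entries of $c$). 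Therefore
\[
m_i=c_{i+1}-c_i\ \ge\ \min_j c_j-\max_j c_j\ =\ -d(x',y)\ \ge\ -d(x,y),
\]
which is exactly the required inequality and completes the proof.
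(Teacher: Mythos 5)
The estimate $m_i(x,y;\omega)\ge -d(x,y)$ is true, and your retraction argument for it is essentially sound, but the reduction in your first paragraph is where the actual content of Proposition~\ref{P1} lives, and it does not go through. You read \lemref{rn4} as asserting that $s^x_k(\omega)=s^y_{k+m}(\omega)$ for \emph{every} $k\in\ZZ_+^n$ with $k+m\in\ZZ_+^n$. The proof of \lemref{rn4} only establishes that identity for $k$ ranging over $k_0+\ZZ_+^n$, where $k_0$ is the coordinate vector of some (possibly very deep) common point of $[x,\omega)$ and $[y,\omega)$; taken literally, the stronger quantifier is false. For instance, whenever $x\ne y$ but $m(x,y;\omega)=0$ (such pairs exist in abundance --- already in a tree, take $x$ and $y$ equidistant from the confluence point of their two rays to $\omega$), the choice $k=0$ satisfies $k,\ k+m\in\ZZ_+^n$, yet $s^x_0(\omega)=x\ne y=s^y_0(\omega)$. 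Your bound on $m$ only guarantees that $k+m$ is a legitimate coordinate vector once $k_i\ge d(x,y)$; it says nothing about whether the sectors $[x,\omega)$ and $[y,\omega)$ have actually coalesced by depth $d(x,y)$, and that coalescence is precisely what the proposition asserts. The retraction cannot supply it either: $\rho$ gives $\rho(s^x_k(\omega))=s^{\rho(x)}_k(\omega)=s^y_{k+m}(\omega)$ for all admissible $k$, but $\rho$ is far from injective, so this locates only the image of $s^x_k(\omega)$ in $A$, not the point itself.

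What is missing is an argument inside the building rather than inside a single apartment. The paper supplies it by induction on $d(x,y)$: \lemref{P3} places a chamber containing $x$ together with the entire sector $[x,\omega)$ in one apartment, which settles the case $d(x,y)=1$ by an explicit coordinate translation showing $\phi^{-1}(\cS_0)=[y,\omega)$, and the general case follows by stepping through intermediate vertices of $\conv(x,y)$. Your per-coordinate estimate on $m$ is then recovered as a consequence (it is in effect the ``moreover'' clause of Lemma~\ref{rn5}), so in the paper's logical order it is an output of Proposition~\ref{P1}, not an input to it.
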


\Proof.  Set $r=d(x,y)$, and let $k=(r,r,\ldots,r)$.  An easy consequence
of Lemma \ref{rn4} is that
$z\in [x,\omega)$ implies $[z,\omega)\subset [x,\omega)$,
and so it is sufficient to show that 
$s^x_{k}\in [x,\omega)\cap [y,\omega)$.  

To proceed inductively, the case $d(x,y)=1$ is established first.
By Lemma \ref{P3}, there exists an apartment $A$ containing both
$y$ and $S=[x,\omega)$.  As $S$ is a sector, there exists 
a type rotating isomorphism $\varphi:A\to\Sigma$ such that 
$\varphi(S)=\cS_0$ with $\varphi(x)=0$.   
 
Since $x$, $y$ are adjacent in $A$, 
$\varphi(y)=(y_1,y_2,\ldots,y_n)$, where $y_i\in\{-1,1,0\}$,
 $0\leq i\leq n$.  Next, define the 
type rotating isomorphism $\phi:A\to\Sigma$ by
$$\phi(z)=\varphi(z)-\varphi(y)\,.$$
This map takes $y$ to the origin in $\Sigma$, and 
$(\phi)^{-1}(\cS_0)$ is a sector.  Moreover, for $z\in s_k^x(\omega)$,  
$\phi(z)=\varphi(z)-\varphi(y)=((k_1-y_1),(k_2-y_2),\ldots,(k_n-y_n))$. 
Thus 
$\phi(z)\in\cS_0$ if and only if $k_i\geq y_i$ for all $1\leq i\leq n$. 
 
It follows that 
$\phi^{-1}(\cS_0)=[y,\omega)$.  Moreover, as  
$(1,1,\ldots,1)\geq (y_1,\ldots,y_n)$, one has that  
$s^x_{(1,1,\ldots,1)}(\omega)\in [x,\omega)\cap [y,\omega)$.
This proves the case for $d(x,y)=1$.  

In general, given $s\in \ZZ_+, s>1$, suppose that the statement of the 
lemma is true for all $y'$ such that
$d(x,y')\leq s-1$, and let $y\in S_k(x)$ with $d(x,y)=s$.   
Without loss of generality, suppose
that $k_1\geq 1$ and set $k'=(k_1-1,k_2,\ldots,k_n)$.  
Let $z$ be the unique element in $\conv(x,y)\cap S_{k'}(x)$ and 
note that $d(z,y)=1$ and $d(z,x)=d(y,x)-1=s-1$.  Hence by the 
inductive hypothesis
$$a=s^x_{(s-1,s-1,\ldots,s-1)}(\omega)\in [x,\omega)\cap [z,\omega)\,.$$
Then for some $t=(t_1,t_2,\ldots,t_n)\in\ZZ_+^n$,   
one has $a=s^z_{t}(\omega)$ and $m_i(x,z;\omega)=(t_i-(s-1))$.  
As $t_i\in\ZZ_+$, it follows that 
$t_i+1\geq d(y,z)=1$.  By the inductive hypothesis, this implies that
$$s^z_{(t_1+1,\ldots,t_n+1)}(\omega)\in [z,\omega)\cap [y,\omega)\,.$$
Writing $t'=t-m(x,z;\omega)$,
$$s^z_{(t_1+1,t_2+1,\ldots,t_n+1)}(\omega)=
    s^x_{(t'_1+1,t'_2+1,\ldots,t'_n+1)}=s^x_{(s,\ldots,s)}(\omega)
    \in [x,\omega)\cap [y,\omega)\,.$$
The result follows. 
\qed

\begin{definition}\label{rn2}
Given $y\in\fX^0$, the topology on $\Omega$ based at $y$ is given
by the basis of open sets $\{\Omega^x_y\}_{x\in\fX^0}$, where
$$\Omega_y^x=\{\omega\in\Omega;x\in[y,\omega)\}\,.$$
\end{definition}
The topology so defined is independent 
of the choice of $y$. See below for details.
Note that for $y\in\fX^0$, and $k\in\ZZ_+^n$, the boundary $\Omega$ can
be expressed as the disjoint union
$$\Omega=\bigcup_{x\in S_k(y)}\Omega_y^x\,.$$

There is  a natural class of Borel measures on $\Omega$.  
Namely, for a fixed $y\in\fX^0$ and a basic open set $\Omega^x_y$ with
$x\in S_k(y)$, let
$$\nu_y(\Omega_y^x)=\frac{1}{|S_k(y)|}\,.$$
$|S_k|=|S_k(y)|$ is independent of $y$ and its actual value was determined 
in \cite[Corollary 2.7]{C99}.  
Specifically, let $q$ be the order of the $\tilde A_n$ building.  Also, 
for $k=(k_1,k_2,\ldots,k_n)\in\ZZ_+^n$, index the non-zero entries of
$k$ by
$\{i:k_i\geq 1\}=\{j_1,\ldots,j_t\}$, and set $j_0=0$ and $j_{t+1}=n+1$.  
Then
\begin{equation}\label{nastyformula}
|S_k|=q^{-\sum_{v=1}^t j_v(j_{v+1}-j_v)}\left[\begin{matrix}
                    n+1\\
                j_1-j_0,\ldots,j_{t+1}-j_t\\
                \end{matrix}\right]_q
    \cdot q^{\sum_{i=1}^ni(n+1-i)k_i}\,,
\end{equation}
where $[\cdots]_q=[n+1]_q/([j_1-j_0]_q\cdots [j_{t+1}-j_t]_q)$, and
$[k]_q=(q^k-1)\cdots(q-1)$.

Unlike the topology on $\Omega$, the value of the measure $\nu_y$ is
dependent on the choice of $y\in\fX^0$.  However, as shown
by the following lemmas, the set of measures $\{\nu_y\}_{y\in\fX^0}$
is absolutely continuous.
 
The next lemma is generalized from \cite[Lemma 2.4]{CMS}.

\begin{lemma}\label{rn5} 
Let $y\in S_k(x)$.  Suppose that $z\in S_l(x)\cap S_{l'}(y)$, 
where $l_i\geq d(x,y)$ for all $i$.  Then 
$\Omega_x^z\subset \Omega_y^z$. Moreover, if  
${m}(x,y;\omega)=(m_1,\ldots,m_n)$, as in Lemma \ref{rn4}, then  
$m_i(x,y;\omega)=l'_i-l_i$ for all $\omega\in\Omega_x^z$ 
\end{lemma}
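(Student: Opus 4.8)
The plan is to prove both assertions by identifying the single vertex $z$ with the appropriate point on each of the rays $[x,\omega)$ and $[y,\omega)$, and then reading off $m(x,y;\omega)$ directly from the construction in Lemma \ref{rn4}. First I would fix $\omega\in\Omega_x^z$, so that by definition $z\in[x,\omega)$. Since we are also given $z\in S_l(x)$, the vertex $z$ lies in $S_l(x)\cap[x,\omega)$, and hence $z=s^x_l(\omega)$ by the uniqueness built into the definition of $s^x_l(\omega)$ as the unique element of this intersection.

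The containment $\Omega_x^z\subset\Omega_y^z$ then follows immediately from Proposition \ref{P1}. Because $l_i\ge d(x,y)$ for all $i$, that proposition gives $s^x_l(\omega)\in[x,\omega)\cap[y,\omega)$, so $z\in[y,\omega)$, i.e. $\omega\in\Omega_y^z$. As $\omega\in\Omega_x^z$ was arbitrary, this disposes of the first statement.

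For the formula for $m(x,y;\omega)$ I would now bring in the hypothesis $z\in S_{l'}(y)$. Since $z\in[y,\omega)$ by the previous step and $z\in S_{l'}(y)$, the same uniqueness argument gives $z=s^y_{l'}(\omega)$. Thus $z$ is a single vertex that is simultaneously $s^x_l(\omega)$ and $s^y_{l'}(\omega)$ and lies in $[x,\omega)\cap[y,\omega)$. This is precisely a point of the kind used to define $m(x,y;\omega)$ in the proof of Lemma \ref{rn4}, where one sets $m(x,y;\omega)=k'-k$ for any common point $u=s^x_k(\omega)=s^y_{k'}(\omega)$; since that construction is shown there to be independent of the chosen common point, I may take $u=z$, $k=l$, $k'=l'$ and conclude $m(x,y;\omega)=l'-l$, that is $m_i(x,y;\omega)=l'_i-l_i$ for every $i$.

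There is no serious obstacle here; the argument is a direct assembly of Proposition \ref{P1} with the construction underlying Lemma \ref{rn4}. The only point requiring care is the repeated appeal to uniqueness: one must check that $z\in S_l(x)\cap[x,\omega)$ forces $z=s^x_l(\omega)$, and likewise that $z\in S_{l'}(y)\cap[y,\omega)$ forces $z=s^y_{l'}(\omega)$, both of which are immediate from the defining property of $s^x_l(\omega)$. Given this, identifying $z$ as a legitimate choice of common base point for Lemma \ref{rn4} yields the displayed formula at once; note in particular that the index $l'\in\ZZ_+^n$ exists precisely because $z\in S_{l'}(y)$, so the nonnegativity condition of Lemma \ref{rn4} is automatically met.
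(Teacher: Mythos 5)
Your argument is correct and is essentially the paper's own proof, only written out in more detail: the paper likewise identifies $z=s^x_l(\omega)$, invokes Proposition \ref{P1} via the hypothesis $l_i\geq d(x,y)$ to get $z\in[x,\omega)\cap[y,\omega)$, and then reads off $m(x,y;\omega)=l'-l$ from the construction in the proof of Lemma \ref{rn4}. Your explicit attention to the uniqueness of $s^x_l(\omega)$ and $s^y_{l'}(\omega)$, and to the independence of $m(x,y;\omega)$ from the chosen common point, is a welcome elaboration of steps the paper leaves implicit.
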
 
 
\Proof.  Let $\omega\in\Omega_x^z$. Then $z = s_l^x(\omega)$, 
and so $z$ is  an element of $[x,\omega)\cap [y,\omega)$ by Lemma \ref{P1}  
and choice of $l\in\ZZ_+^n$.  Thus $\omega\in\Omega_y^z$.  Moreover, by the 
proof of Lemma \ref{rn4}, $m_i(x,y;\omega)=l'_i-l_i$.   
\qed  

\begin{lemma}\label{rn6} 
The topology on $\Omega$ does not depend on the 
vertex $y\in\fX^0$ chosen in Definition~\ref{rn2}. 
For any $x$, $y\in\fX^0$, the measures $\nu_x$, 
$\nu_y$ are mutually absolutely continuous, and the 
Radon Nikodym derivative of $\nu_y$ with respect to $\nu_x$ is given by  
\begin{equation}\label{RND}
\frac{d\nu_y}{d\nu_x}(\omega)=q^{-\sum_{i=1}^n i(n+1-i) m_i}\,,
\end{equation} 
for $\omega\in \Omega$, where $m_i= {m}_i(x,y;\omega)$. 
\end{lemma}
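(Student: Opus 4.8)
The plan is to reduce both assertions to an analysis of the ``deep'' basic sets $\Omega_x^z$ in which $z$ lies far out in the common part of the two sectors $[x,\omega)$ and $[y,\omega)$. The pivotal observation is that \lemref{rn5}, applied once and then again with the roles of $x$ and $y$ interchanged, forces the \emph{equality} $\Omega_x^z=\Omega_y^z$ whenever $z\in S_l(x)\cap S_{l'}(y)$ with $l_i\ge d(x,y)$ and $l'_i\ge d(x,y)$ for all $i$; on any such set $m(x,y;\cdot)$ is the constant vector $l'-l$, by the second assertion of \lemref{rn5}. By Proposition~\ref{P1}, for a fixed $\omega$ the points $z=s^x_{(M,\ldots,M)}(\omega)$ with $M$ large lie in $[x,\omega)\cap[y,\omega)$ and satisfy $l=(M,\ldots,M)$ and $l'=(M+m_1,\ldots,M+m_n)$, so both coordinate conditions hold once $M$ exceeds $d(x,y)$ and every $|m_i|$. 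Hence these deep sets form a neighbourhood basis at each point of $\Omega$ and generate its Borel structure.

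For the independence of the topology it suffices, by symmetry, to show that each basic set $\Omega_x^z$ of Definition~\ref{rn2} is open in the topology based at $y$. Given $\omega\in\Omega_x^z$, I would choose $w=s^x_{(M,\ldots,M)}(\omega)$ with $M$ so large that $z\in\conv(x,w)$ and that $w$ is deep relative to both $x$ and $y$ in the sense above. Since each sector $[x,\omega')$ is a convex subcomplex, $w\in[x,\omega')$ implies $\conv(x,w)\subset[x,\omega')$, hence $z\in[x,\omega')$; thus $\Omega_x^w\subset\Omega_x^z$. By the previous paragraph $\Omega_x^w=\Omega_y^w$, which is a $y$-basic set containing $\omega$, so $\omega\in\Omega_y^w\subset\Omega_x^z$ and $\Omega_x^z$ is $y$-open.

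For the Radon--Nikodym derivative I would fix a deep set $E=\Omega_x^z=\Omega_y^z$ as above, with $z\in S_l(x)\cap S_{l'}(y)$ and $l_i,l'_i\ge d(x,y)$. By definition of the measures, $\nu_x(E)=1/|S_l|$ and $\nu_y(E)=1/|S_{l'}|$. Because every coordinate of both $l$ and $l'$ is strictly positive, the index set $\{i:k_i\ge 1\}$ equals $\{1,\ldots,n\}$ for $k=l$ and for $k=l'$, so the factor in \eqref{nastyformula} preceding $q^{\sum_i i(n+1-i)k_i}$ depends only on that support pattern and is therefore identical for $|S_l|$ and $|S_{l'}|$; it cancels in the ratio. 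Since $m\equiv l'-l$ is constant on $E$, the derivative is constant there and equals
\[
\frac{\nu_y(E)}{\nu_x(E)}=\frac{|S_l|}{|S_{l'}|}=q^{\sum_i i(n+1-i)(l_i-l'_i)}=q^{-\sum_i i(n+1-i)m_i},
\]
which is exactly \eqref{RND}. As $m(x,y;\cdot)$ is locally constant, hence Borel, and the sets $E$ generate the Borel $\sigma$-algebra, this identifies $q^{-\sum_i i(n+1-i)m_i}$ as $d\nu_y/d\nu_x$; its strict positivity and finiteness then yield mutual absolute continuity.

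The main obstacle is the bookkeeping needed to set up the equality $\Omega_x^z=\Omega_y^z$ rather than the one-sided inclusion of \lemref{rn5}: one must choose $z$ deep enough that \emph{both} inclusions hold simultaneously, since only then is $\nu_y(E)$ computed directly as $1/|S_{l'}|$ and the awkward comparison with the possibly proper difference $\Omega_y^z\setminus\Omega_x^z$ avoided. Once this is arranged, the delicate-looking derivative collapses to the elementary cancellation of the support-dependent prefactor in \eqref{nastyformula}, and the remaining work—local constancy of $m$ and the generation of the Borel structure by deep sets—is routine.
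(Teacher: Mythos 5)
Your proof is correct and follows essentially the same route as the paper's: fix a deep vertex $z=s^x_k(\omega)$ with $k_i\ge d(x,y)$ and $k_i+m_i\ge d(x,y)$, apply Lemma~\ref{rn5} in both directions to obtain $\Omega_x^z=\Omega_y^z$ with $m(x,y;\cdot)$ constant there, and read off the Radon--Nikodym value from the cancellation of the support-dependent prefactor in equation~(\ref{nastyformula}). The only difference is that you spell out the topology-independence step explicitly via convexity of sectors, whereas the paper defers that part to the $n=2$ argument in \cite[Lemma 2.5]{CMS}.
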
 
 
\Proof.  Let $x$, $y\in\fX^0$. In view of the preceding results, the proof that topology is independent of the base vertex $y$ proceeds exactly as in the case $n=2$ \cite[Lemma 2.5]{CMS}. 

Now fix $\omega\in \Omega$. Choose $k\in\ZZ_+^n$ such that $k_i\geq d(x,y)$ and
$k_i+m_i(x,y;\omega)\geq d(x,y)$.  Set $z=s_k^x(\omega)=s^y_{k'}(\omega)$,
where $k'={k+m(x,y;\omega)}$.
Lemma \ref{rn5} implies that $\Omega_x^z=\Omega_y^z$. Moreover 
it follows from (\ref{nastyformula}) that
\begin{equation*}
\nu_y(\Omega_x^z)=(|S_{k'}|)^{-1}
    =(q^{\sum_{i=1}^n i(n+1-i) m_i}|S_{k}|)^{-1} 
    = q^{-\sum_{i=1}^n i(n+1-i) m_i}\nu_x(\Omega_x^z)\,.
\end{equation*} 
Since $\{ \Omega_x^z ;\ z\in [x,\omega), d(x,z)\ge d(x,y)\}$ is a basic family of neighbourhoods of $\omega$, the result follows.
\qed 

\begin{remark}
Equation (\ref{RND}) is precisely \cite[Equation (1.6)]{C99}, and its proof is outlined in 
\cite[Section 4]{C99}.
\end{remark}

\subsection{$\tilde A_n$ Groups}\label{sec132}

Let  $\Pi$ be a finite projective geometry of dimension $n$ and order $q$.
If $n>2$ then $\Pi$ is the Desarguesian projective geometry $\Pi(V)$, where $V$ is a
vector space of dimension $n+1$ over a finite field of order $q$. Let $\dim(u)$ denote the dimension of the subspace $u$ of $V$. In the Desarguesian case the points and lines of $\Pi$ are the one- and two- dimensional subspaces of $V$ respectively. We shall extend this notation to the non Desarguesian case, so that an element $u$ of a projective plane $\Pi$ satisfies $\dim u =1$ if it is a point and $\dim u =2$ if it is a line.
Let $\lambda$ be an 
involution of $\Pi$ such that $\dim(\lambda(u))=n+1-\dim(u)\mod(n+1)$.   An $\tilde A_n$ triangle presentation $T$ compatible with
$\lambda$ is defined as follows.  Let $T$ be a set of triples $\{(u,v,w):u,v,w\in\Pi\}$ which
satisfy the following properties.

\begin{enumerate}
\item Given $u$, $v\in\Pi$, then $(u,v,w)\in T$ for some $w\in\Pi$
    if and only if $\lambda(u)$ and $v$ are distinct and incident.
\item If $(u,v,w)\in T$, then $(v,w,u)\in T$.
\item If $(u,v,w_1)\in T$ and $(u,v,w_2)\in T$, then $w_1=w_2$.
\item If $(u,v,w)\in T$, then 
    $(\lambda(w),\lambda(v),\lambda(u))\in T$.
\item If $(u,v,w)\in T$, then $\dim(u)+\dim(v)+\dim(w)\equiv0\mod n+1$.
\end{enumerate}

The group associated with this triangle presentation is given by
\[
\Gamma_T=\left\langle\{a_v\}_{v\in\Pi(x)}\left|
    \begin{matrix}(1) a_{\lambda(v)}=a_v^{-1}
                &\hbox{ for all }v\in\Pi \\
            (2) a_u a_v a_w=1 &\hbox{ for all }
                (u,v,w)\in T\end{matrix}\right.
            \right\rangle\,.
\]

The Cayley graph of $\Gamma_T$, with respect to the generators 
$\{a_u\}_{u\in\Pi}$ 
is the 1-skeleton of an $\tilde A_n$
building $\fX$ and $\Gamma_T$ acts on the vertices of the building in a type
rotating manner.  Conversely any group $\Gamma$ acting on an $\tilde A_n$
building in this way arises as $\Gamma=\Gamma_T$ for some triangle
presentation $T$  \cite[pp 45--46]{C95}. 
Unless otherwise specified,
a generator $a_u$ of $\Gamma$ will be identified with the corresponding element $u\in\Pi$.

\begin{remark}
The type rotating hypothesis in the definition of an $\tilde A_n$ group has been removed  and the appropriate notion of triangle presentation studied in the Ph.D. thesis of T. Svenson \cite{sven}, thereby generalising the results of \cite{C95}.
\end{remark}

For the rest of this article, the $\tilde A_n$ group $\Gamma$ will be assumed to act on $\fX$  by left
translation with $\Gamma$ being identified with the vertex set $\fX^0$.  The identity element $1$ of $\Gamma$ is a preferred vertex of $\fX$ of type $0$, and we write $S_k=S_k(1)$ for $k=(k_1,k_2,\ldots,k_n)\in\ZZ^n_+$. The group $\Gamma$ acts naturally on the boundary $\Omega$.

If $u_1,u_2$ are elements of $\Pi$ we denote by $u_1\vee u_2$ their {\em join}; that is their least upper bound in the lattice of subspaces of $\Pi$. If $\Pi=\Pi(V)$ is Desarguesian then $u_1\vee u_2=\Pi$ means simply that $u_1+u_2=V$. On the other hand, if $\Pi$ is a non Desarguesian plane and $u_1$ is a point and $u_2$ is a line of $\Pi$, then $u_1\vee u_2=\Pi$ means that $u_1$ and $u_2$ are not incident.

By \cite[Lemma 2.2]{C95}, every word in $\Gamma$ can be expressed 
uniquely in {\it normal form}
$$x=u_1u_2\ldots u_l\,,$$
where $\dim (u_i)\leq\dim(u_{i+1})$ and $u_i^{-1}\vee u_{i+1}=\Pi$.  Moreover,
$x\in S_k$, where $k_j=|\{u_i:\dim(u_i)=j\}|$.

Recall from \cite[Proof of Theorem 2.5]{C95} that if $x\in\fX^0$ then the 
projective geometry of neighbours of $x$ is $\{xu:u\in\Pi\}$ and  
$\tau(xu)=\tau(x)+\dim u\mod(n+1)$.  Moreover, $xu$ and $xu'$ are  
adjacent vertices if and only if $u$ and $u'$ are incident in $\Pi$ (that is, $u\subset u'$ or $u'\subset u$). In particular a chamber of $\fX$ 
containing the vertex $x$ has the form 
$$\{x,xu_1,xu_2,\ldots,xu_n\}$$ 
where $\dim u_i=i$ and  
$u_1\subset u_2\subset\cdots\subset u_n$ is a complete flag in $\Pi$. 

For more information on $\tilde A_n$ groups, the reader is referred to 
\cite{C95}.

\bigskip

\section {An Ergodic measure preserving subgroup of the full group.} 
 
The action of an
$\tilde A_n$ group $\Gamma$ on the boundary $\Omega$ of the corresponding
$\tilde A_n$ building,
is measure-theoretically ergodic with respect to each of the  measures
$\nu_y$, ${y\in\fX}$. For the classification of the action it will be necessary to show that the full group $[\Gamma]$ (defined below) contains a countable measure preserving subgroup $K_0\subset[\Gamma]$ which acts ergodically on $\Omega$.

The following two lemmas are straightforward generalisations of 
\cite[Lemma 4.6]{RR} and \cite[Lemma 4.7]{RR} respectively.

\begin{lemma}\label{E1} 
Let $K$ be a group which acts on $\Omega$.  If $K$ acts transitively 
on the collection of sets  
$\{\Omega^x_1:x\in S_k\}$ 
for every $k\in\ZZ_+^n$, then $K$ acts ergodically on $\Omega$. 
\end{lemma}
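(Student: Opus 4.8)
The plan is to prove that a group $K$ acting transitively on each family $\{\Omega^x_1 : x\in S_k\}$ must act ergodically, by showing that any $K$-invariant measurable set $E\subset\Omega$ with $\nu_1(E)>0$ has full measure. The key structural input is the decomposition $\Omega=\bigcup_{x\in S_k}\Omega^x_1$ into the disjoint basic open sets recorded just after Definition~\ref{rn2}, together with the fact that the $\Omega^x_1$ for $x$ ranging over $S_k$ and $k$ ranging over $\ZZ_+^n$ form a basis for the topology and generate the Borel $\sigma$-algebra.

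First I would invoke the Lebesgue density theorem in the form appropriate to this totally disconnected setting: since the sets $\{\Omega^x_1 : x\in S_k, k\in\ZZ_+^n\}$ form a countable basis of clopen sets that is nested and refines as $k$ increases, for $\nu_1$-almost every point $\omega\in E$ one can find basic sets $\Omega^x_1$ shrinking to $\omega$ on which the relative measure of $E$ tends to $1$. Concretely, fixing $\varepsilon>0$, I would choose $k$ large and a particular $x_0\in S_k$ with
\[
\nu_1(E\cap\Omega^{x_0}_1) > (1-\varepsilon)\,\nu_1(\Omega^{x_0}_1).
\]
This is the step where the density argument does the real work, and it is the main obstacle: one must verify that the basis $\{\Omega^x_1\}$ genuinely behaves like a differentiation basis for $\nu_1$, i.e.\ that almost every point of $E$ is a density point relative to these nested clopen neighbourhoods. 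This follows from the martingale convergence theorem applied to the conditional expectations of $\Ind_E$ onto the increasing sequence of finite $\sigma$-algebras generated by the partitions $\{\Omega^x_1 : x\in S_k\}$, but it must be set up carefully so that the relevant $\sigma$-algebras increase to the full Borel $\sigma$-algebra.

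Next I would exploit the transitivity hypothesis. Because $K$ acts transitively on $\{\Omega^x_1 : x\in S_k\}$, for every other $x\in S_k$ there is some $g\in K$ with $g\,\Omega^{x_0}_1=\Omega^x_1$. Since $K$ preserves $E$ and the sets in $S_k$ all have the same measure $\nu_1(\Omega^x_1)=|S_k|^{-1}$ (by the definition of $\nu_1$ following Definition~\ref{rn2}), applying $g$ to the density inequality gives
\[
\nu_1(E\cap\Omega^x_1) > (1-\varepsilon)\,\nu_1(\Omega^x_1)
\]
for \emph{every} $x\in S_k$ simultaneously. Here I should note that the measure-preserving property need only hold up to the fact that $g$ maps one basic set bijectively onto another of equal measure, so the pushed-forward density estimate is exact; if $K$ is not assumed measure preserving one would instead use the Radon--Nikodym derivative from Lemma~\ref{rn6}, but transitivity on equal-measure sets is what makes the estimate uniform.

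Finally I would sum over $x\in S_k$ using the disjoint decomposition $\Omega=\bigsqcup_{x\in S_k}\Omega^x_1$:
\[
\nu_1(E)=\sum_{x\in S_k}\nu_1(E\cap\Omega^x_1) > (1-\varepsilon)\sum_{x\in S_k}\nu_1(\Omega^x_1)=(1-\varepsilon)\,\nu_1(\Omega)=1-\varepsilon.
\]
Since $\varepsilon>0$ was arbitrary, $\nu_1(E)=1$, so $E$ is co-null and the action is ergodic. The whole argument is the standard ``density point plus transitivity spreads the density everywhere'' scheme; the only genuinely delicate point is establishing the density statement in the first paragraph, after which transitivity and countable additivity finish the proof routinely.
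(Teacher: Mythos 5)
Your argument reaches the right conclusion by a genuinely different route from the paper's. The paper avoids density points entirely: for a $K$-invariant set $X_0$ of positive measure it introduces the auxiliary measure $\mu(X)=\nu_1(X\cap X_0)$, checks that $\mu$ is $K$-invariant, uses transitivity to see that $\mu$ is constant on the cells of each partition $\{\Omega^x_1:x\in S_k\}$ and hence equals $c\,\nu_1$ with $c=\nu_1(X_0)>0$, and then reads off $\nu_1(\Omega\setminus X_0)=c^{-1}\nu_1((\Omega\setminus X_0)\cap X_0)=0$. That argument needs nothing beyond uniqueness of measures agreeing on a generating family, so it sidesteps the differentiation step you rightly flag as the delicate part of your approach. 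Your density step does work, though: the partitions $\{\Omega^x_1:x\in S_{(m,\ldots,m)}\}$ refine as $m$ grows (each cell of the finer partition sits inside the cell of the coarser one determined by the convex hull of $1$ and the indexing vertex) and they generate the Borel $\sigma$-algebra, so martingale convergence supplies the density points. The trade-off is that your scheme is the standard and portable one, while the paper's is more elementary and self-contained.

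One justification in your write-up must be replaced. You transfer the estimate $\nu_1(E\cap\Omega^{x_0}_1)>(1-\varepsilon)\nu_1(\Omega^{x_0}_1)$ to every $\Omega^x_1$ on the grounds that ``$g$ maps one basic set bijectively onto another of equal measure, so the pushed-forward density estimate is exact.'' That principle is false in general: a measurable bijection between two sets of equal total measure can distort the measures of subsets arbitrarily. What saves the step is that $K$ genuinely preserves $\nu_1$ --- the paper's opening observation --- because each $g\in K$ permutes every one of the finite partitions $\{\Omega^x_1\}_{x\in S_k}$, all of whose cells have measure $1/|S_k|$; since these partitions generate the Borel $\sigma$-algebra, $g_*\nu_1=\nu_1$. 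With that in hand, $\nu_1(E\cap\Omega^x_1)=\nu_1\bigl(g(E\cap\Omega^{x_0}_1)\bigr)=\nu_1(E\cap\Omega^{x_0}_1)$ and your summation over $x\in S_k$ goes through. The aside invoking the Radon--Nikodym formula of Lemma \ref{rn6} is a red herring: that lemma compares $\nu_y$ with $\nu_x$ for different base vertices of the building and says nothing about an abstract group $K$ acting on $\Omega$.
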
 
 
\Proof.
Observe first that $K$ preserves $\nu_1$ since $\nu_1(\Om_1^x)$ is independent of $x$.
Suppose that $X_0\subseteq\Om$ is a Borel set which is invariant under $K$
and such that $\nu_1(X_0)>0$. It will be shown that 
$\nu_1(\Om\setminus X_0)=0$, thus establishing
the ergodicity of the action.

Define a new measure $\mu$ by $\mu(X)=\nu_1(X\cap X_0)$ for each Borel set
$X\subseteq\Om$. Now, for each $g\in K$,
\begin{eqnarray*}
\mu(gX)&=&\nu_1(gX\cap X_0)\\ 
&=& \nu_1(X\cap g^{-1}X_0) \\
&\leq & \nu_1(X\cap X_0) + \nu_1(X\cap (g^{-1}X_0\setminus X_0)) \\
&=& \nu_1(X\cap X_0) \\
&=& \mu(X).
\end{eqnarray*}
Similarly, $\mu(gX)\leq\mu(g^{-1}gX)=\mu(X)$. 
Therefore $\mu$ is $K$-invariant.

For each $x$, $y\in S_k$ there exists a $g\in K$ such that $g\Om_1^x=\Om_1^y$
by transitivity. Thus $\mu(\Om_1^x) = \mu(\Om_1^y)$. Since
$\Om$ is the union of $|S_k|$ disjoint sets $\Om_1^x$, $y\in S_k$,
each of
which has equal measure, one has that
\[
\mu(\Om_1^x)=\frac{c}{|S_k|}, \text{ for each } x\in S_k,
\]
where $c=\mu(X_0)=\nu_1(X_0)>0$. Thus $\mu(\Om_1^x)=c\nu_1(\Om_1^x)$ for every
vertex $x\in \fX$.

Since the sets $\Om_1^x$ generate the Borel $\s$-algebra, it follows that
$\mu(X)=c\nu_1(X)$ for each Borel set $X$. Therefore
\begin{eqnarray*}
\nu_1(\Om\setminus X_0) &=& c^{-1}\mu(\Om\setminus X_0) \\
&=& c^{-1}\nu_1((\Om\setminus X_0) \cap X_0) = 0 ,
\end{eqnarray*}
thus proving ergodicity.
\qed

\begin{lemma}\label{E2} 
Assume that $K\leq {\rm{Aut}}(\Omega)$ acts transitively on the collection 
of sets  
$\{\Omega^x_1:x\in S_m\}$ 
for every $m\in\ZZ_+^n$.  
Then there is a countable subgroup $K_0$ of $K$ which also acts 
transitively on the collection of sets  
$\{\Omega^x_1:x\in S_m\}$ 
for every $S_m$, $m\in\ZZ_+^n$. 
\end{lemma}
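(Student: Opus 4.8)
The plan is to exploit the fact that for each fixed $m\in\ZZ_+^n$ the collection $\{\Omega_1^x:x\in S_m\}$ is \emph{finite}, so that transitivity on it is witnessed by only finitely many elements of $K$; since $\ZZ_+^n$ is countable, gathering these finite witnessing sets over all $m$ produces a countable subset of $K$ whose generated subgroup is the desired $K_0$.

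First I would record that each $S_m=S_m(1)$ is a finite set of vertices: by the local finiteness of $\fX$ (equivalently, by the explicit formula \eqref{nastyformula} for $|S_m|$, which is finite since $q<\infty$), the set $S_m$ has finitely many elements, and hence so does the collection $\mathcal{C}_m:=\{\Omega_1^x:x\in S_m\}$. Next, for each $m$ I would use the hypothesised transitivity of $K$ on $\mathcal{C}_m$ to choose, for every ordered pair $(x,y)\in S_m\times S_m$, an element $g^{(m)}_{x,y}\in K$ with $g^{(m)}_{x,y}\,\Omega_1^x=\Omega_1^y$, and set $F_m=\{g^{(m)}_{x,y}:x,y\in S_m\}$. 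Since $S_m$ is finite, $F_m$ is a finite subset of $K$.

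Then I would form $F=\bigcup_{m\in\ZZ_+^n}F_m$. As $\ZZ_+^n$ is countable and each $F_m$ is finite, $F$ is countable, and I would take $K_0=\langle F\rangle$, the subgroup of $K$ generated by $F$. The subgroup generated by a countable subset of any group is itself countable, since every element is a finite word in the generators and their inverses and there are only countably many such words; hence $K_0$ is countable. Finally I would check that $K_0$ acts transitively on each $\mathcal{C}_m$: given $x,y\in S_m$, the element $g^{(m)}_{x,y}\in F_m\subseteq K_0$ already carries $\Omega_1^x$ to $\Omega_1^y$, so the transitivity of $K$ descends to $K_0$.

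I do not expect a genuine obstacle here; the argument is the standard ``diagonal'' countability reduction. The only points that require care are confirming the finiteness of $S_m$ (which is immediate from local finiteness, or from \eqref{nastyformula}) and invoking the elementary fact that a group generated by countably many elements is countable. Everything else is bookkeeping.
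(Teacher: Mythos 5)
Your proposal is correct and is essentially identical to the paper's own proof: both select a witnessing element $k_{x,y}\in K$ for each pair $x,y$ in the finite set $S_m$, take the countable union over all $m\in\ZZ_+^n$, and let $K_0$ be the (countable) subgroup generated by these elements. No further comment is needed.
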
 
 
\begin{proof}
For each pair $x$, $y\in S_m$, there exists an element $k\in K$ such that
$k\Om_1^y=\Om_1^x$. Choose one such element $k\in K$ and label it $k_{x,y}$.
Since $S_m$ is finite, there are a finite number of elements
$k_{x,y}\in K$ for
each $S_m$. There are countably many sets $S_m$, so the set
$\{ k_{x,y} : x,y\in S_m , m\in\ZZ_+^n\}$  is countable. Hence the group
\[
K_0=\left< k_{x,y} ; x,y\in S_m , m\in\ZZ_+^n \right>\leq K
\]
is countable and satisfies the required condition.
\end{proof} 

\begin{definition}
Given a group $\Gamma$ acting on a measure space $\Omega$, define the
{\it full group}, $[\Gamma]$, of $\Gamma$ by
$$[\Gamma]=\{T\in\Aut(\Omega);T\omega\in\Gamma\omega\text{ for almost every
            $\omega\in\Omega$}\}\,.$$
The set $[\Gamma]_0$ of measure preserving maps in $[\Gamma]$ is then
given by
$$[\Gamma]_0=\{T\in[\Gamma];\nu_y\circ T=\nu_y,\, y\in\fX^0\}\,.$$
\end{definition}

It will be shown that there is a countable group $K_0$ of  
{\it measure-preserving} automorphisms of $\Omega$ such that 
\begin{enumerate} 
 
\item $K_0$ acts ergodically on $\Omega$. 
 
\item $K_0\leq[\Gamma]$. 
\end{enumerate} 
 
By the Lemmas above and the definition of $[\Gamma]$, it is enough to  
find for each $k\in\ZZ_+^n$ and $x,y\in S_k$, an automorphism  
$g\in\Aut(\Omega)$ such that $g(\Omega_1^x)=\Omega^y_1$ and  
$g\omega\in\Gamma\omega$ for almost  
all $\omega\in\Omega$. 
 
Identify a simplex in $\fX$ with its vertex set, and recall 
from section \ref{sec132} that 
a chamber of $\fX$ containing the vertex $x$ is of the form
$$\{x,xu_1,xu_2,\ldots,xu_n\}\,.$$ 
where $\dim u_i=i$ and $u_1\subset u_2\subset\cdots\subset u_n$ is
a complete flag in $\Pi$. 

\begin{lemma}\label{E3} 
Let $C=\{1,p_1,p_2,\ldots,p_n\}$ be a chamber in $\fX$ with base vertex 
the identity element $1$ of $\Gamma$, where $p_i$ are generators of  
$\Gamma$ and $\dim p_i=i$.  There are $q$ 
chambers $C'=\{x,p_1\ldots,p_n\}$ in $\fX$ meeting $C$ in the face 
$\{p_1,\ldots,p_n\}$. The vertex $x$ opposite $1$ in $C\cup C'$ has 
the normal form $x=p_1u_n$, where $\dim u_n=n$ and $p_1^{-1}\vee u_n=\Pi$. 
Thus $x\in S_{(1,0,\ldots,0,1)}$. 
 
Equivalently, $x=p_nu'_1$, where $\dim u'_1=1$ and $p_n\vee (u'_1)^{-1}=\Pi$. 
\end{lemma}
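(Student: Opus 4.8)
The plan is to separate the counting assertion, which comes from thickness of the building, from the normal-form assertion, which is forced by the type map together with the description of the neighbours of a vertex recalled in Section~\ref{sec132}.

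First I would establish the count. The set $F=\{p_1,\ldots,p_n\}$ is a codimension-one face (a panel) of $C$, and since $\fX$ is thick of order $q$, every panel lies on exactly $q+1$ chambers. One of these is $C$ itself, so there are precisely $q$ remaining chambers $C'$, each meeting $C$ in $F$. Writing $C'=\{x,p_1,\ldots,p_n\}$, the vertex $x$ is the one not on $F$; it is the vertex opposite $1$ across $F$ in $C\cup C'$, and since $x\notin C$ we have $x\ne 1$. Distinct chambers $C'$ yield distinct $x$, so we obtain $q$ such vertices.

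Next I would pin down $x$ by its type. In $C'$ the vertices $p_1,\ldots,p_n$ have types $1,\ldots,n$ (recall $\tau(p_i)=\dim p_i=i$), and every chamber carries exactly one vertex of each type in $\ZZ/(n+1)\ZZ$; hence $\tau(x)=0$. As $x$ is a vertex of the simplex $C'$ it is adjacent to $p_1$, and the neighbours of $p_1$ are exactly $\{p_1u:u\in\Pi\}$ with $\tau(p_1u)=\tau(p_1)+\dim u$. Thus $x=p_1u$ for some $u\in\Pi$ with $1+\dim u\equiv 0\pmod{n+1}$, forcing $\dim u=n$; write $u=u_n$. To see that $p_1u_n$ is in normal form, note that $p_1^{-1}$ corresponds to $\lambda(p_1)$, a hyperplane, so both $p_1^{-1}$ and $u_n$ are hyperplanes; two hyperplanes join to all of $\Pi$ unless they coincide, and $u_n=p_1^{-1}$ would give $x=1$, contrary to $x\ne 1$. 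Hence $p_1^{-1}\vee u_n=\Pi$, and since $\dim p_1=1\le n=\dim u_n$ the word $p_1u_n$ is indeed the normal form. Reading it off, one factor has dimension $1$ and one has dimension $n$, so $k_1=k_n=1$ and all other $k_j=0$; that is, $x\in S_{(1,0,\ldots,0,1)}$.

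Finally I would obtain the alternative expression by the symmetric argument, using adjacency of $x$ to $p_n$ instead of $p_1$. Since $x$ is also a neighbour of $p_n$, we have $x=p_nu'$ for some $u'\in\Pi$, and $\tau(x)=\tau(p_n)+\dim u'=n+\dim u'\equiv 0\pmod{n+1}$ forces $\dim u'=1$; write $u'=u_1'$. Again $x\ne 1$ rules out $u_1'=p_n^{-1}=\lambda(p_n)$, so $p_n$ and $(u_1')^{-1}=\lambda(u_1')$ are distinct hyperplanes, whence $p_n\vee(u_1')^{-1}=\Pi$, giving the stated equivalent form $x=p_nu_1'$. The routine checks (types, inverses via $\lambda$, and the distinct-hyperplanes-span fact) are all immediate, so there is no single hard computation. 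The one point deserving care is conceptual: one should not try to single out which $q$ of the many hyperplanes $u_n$ in $\Pi$ actually arise. The count of $q$ is supplied entirely by thickness, while the content of the lemma is only the \emph{form} $x=p_1u_n$ (equivalently $x=p_nu_1'$) of the opposite vertex, forced by the type map and the adjacency description; reconciling these two viewpoints is the main thing to get right.
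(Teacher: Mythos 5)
Your proof is correct, and it reaches the same conclusions as the paper's, but the logical route is genuinely different in one respect. The paper works constructively inside the projective geometry of neighbours of $p_1$: it writes $p_i=p_1u_{i-1}$ for a flag $u_1\subset\cdots\subset u_{n-1}$, observes that there are $q+1$ hyperplanes $u_n\supset u_{n-1}$ of which exactly one equals $p_1^{-1}$ (that choice returning the vertex $1$), and so \emph{builds} the $q$ chambers $C'=\{p_1,\ldots,p_n,p_1u_n\}$ directly, obtaining the count, the identification $x=p_1u_n$, and the normal form all at once; it even pins down exactly which hyperplanes $u_n$ occur, namely those containing $u_{n-1}$ and distinct from $p_1^{-1}$. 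You instead import the count of $q$ wholesale from the thickness hypothesis (each panel lies on $q+1$ chambers) and then \emph{deduce} the form of $x$ from the type map and the adjacency description $\{p_1u:u\in\Pi\}$, using $x\ne1$ to force $u_n\ne p_1^{-1}$ and hence $p_1^{-1}\vee u_n=\Pi$ (two distinct hyperplanes join to $\Pi$). Your route is shorter and cleanly separates the two assertions, at the cost of not exhibiting the $q$ admissible hyperplanes explicitly; the paper's route essentially re-derives the thickness count at this panel from the flag structure, which is slightly more information than the lemma states but is in the same spirit as its use elsewhere (e.g.\ in the proof of Lemma~\ref{E8}, where the explicit flag construction is reused). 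Your treatment of the second expression $x=p_nu_1'$ by the symmetric type argument matches the paper's one-line remark that ``a similar argument proves the final statement.''
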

 
\refstepcounter{picture} 
\begin{figure}[htbp]\label{Ef1} 
{}\hfil 
\font\thinlinefont=cmr5 
\begingroup\makeatletter\ifx\SetFigFont\undefined%
\gdef\SetFigFont#1#2#3#4#5{%
  \reset@font\fontsize{#1}{#2pt}%
  \fontfamily{#3}\fontseries{#4}\fontshape{#5}%
  \selectfont}%
\fi\endgroup%
\centerline{\mbox{\beginpicture 
\setcoordinatesystem units <.25000cm,.250000cm> 
\unitlength=.250000cm 
\linethickness=1pt 
\setplotsymbol ({\makebox(0,0)[l]{\tencirc\symbol{'160}}}) 
\setshadesymbol ({\thinlinefont .}) 
\setlinear 
%
%
\linethickness= 0.500pt 
\setplotsymbol ({\thinlinefont .}) 
\putrule from  3.190 19.689 to  8.270 19.689 
\plot  8.270 19.689  5.730 15.289 / 
\plot  5.730 15.289  3.190 19.689 / 
%
%
\linethickness= 0.500pt 
\setplotsymbol ({\thinlinefont .}) 
\putrule from  3.175 19.685 to  8.255 19.685 
\plot  8.255 19.685  5.715 24.086 / 
\plot  5.715 24.086  3.175 19.685 / 
%
%
\put{\SetFigFont{10}{14.4}{\rmdefault}{\mddefault}{\updefault}{${C}$}} [lB] at  5.25 17.66 
%
%
\put{\SetFigFont{10}{14.4}{\rmdefault}{\mddefault}{\updefault}$C'$} [lB] at  5.25 20.55
%
%
\put{\SetFigFont{10}{14.4}{\rmdefault}{\mddefault}{\updefault}$x$} [lB] at  6.2 23.8 
%
%
\put{\SetFigFont{10}{14.4}{\rmdefault}{\mddefault}{\updefault}$p_1$} [lB] at  8.7 19.685 
%
%
\put{\SetFigFont{10}{14.4}{\rmdefault}{\mddefault}{\updefault}$p_j$} [lB] at  1.7 19.685 
%
%
\put{\SetFigFont{10}{14.4}{\rmdefault}{\mddefault}{\updefault}1} [lB] at  6 14.4 
\linethickness=0pt 
\putrectangle corners at  2.064 24.282 and  8.572 14.707 
\endpicture}} 
\hfill{} 
\caption{} 
\end{figure} 
 
\Proof.  Consider the projective geometry of the neighbours of 
$p_1$. For $2\leq i\leq n$ there exists $u_{i-1}\in\Pi_{i-1}$ such that  
$$p_{i}=p_1u_{i-1}\qquad\hbox{and}\qquad  
        u_{i-1}\subset u_{j-1}\hbox{ for }i\leq j\,.$$ 
Now choose $u_n\in\Pi_n$ such that $u_{n-1}\subset u_n$ and  
$u_n\neq p_1^{-1}$.  There exist $q$ such 
choices for $u_n$.  One then has that for all 
$2\leq i\leq n$, $u_{i-1}\subset u_n$,  
and hence $p_i=p_1u_{i-1}$ is adjacent to $p_1u_n$.   
Thus $C'=\{p_1,p_2,\ldots,p_n,p_1u_n\}$ is a chamber of $\fX$ and 
$p_1u_n$ is the vertex $x$ opposite 1 in $C\cup C'$.   
Clearly $p_1^{-1}\vee u_n=\Pi$, so $x=p_1u_n$ is the  
normal form expressing $x$ as a word of minimal length.   
 
It now follows that $x\in S_{(1,0,\ldots,0,1)}$, and a similar argument 
proves the final statement. 
\qed 
 
\begin{lemma}\label{E8} 
Let $x\in S_k$ and $y\in S_{k'}$, $k$, $k'\in\ZZ_+^n$, where $k=(k_1,\ldots,k_n)$ and
$k'=(k'_1,\ldots,k'_n)$.  Then there 
exists an automorphism $\varphi$ of $\Omega$ such that 
\begin{enumerate} 
\item $\varphi\in[\Gamma]$, the full group of $\Gamma$; 
\item $\varphi$ is almost everywhere a bijection from $\Omega^x_1$ 
onto $\Omega^y_1$; 
\item $\varphi$ is the identity on  
        $\Omega\backslash(\Omega^x_1\cup\Omega^y_1)$. 
\end{enumerate}
Moreover, if $k=k'$ then $\varphi$ is measure preserving. 
\end{lemma}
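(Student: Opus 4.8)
The plan is to realise $\varphi$ on $\Omega_1^x$ as the boundary action of the single group element $g=yx^{-1}$, corrected in a bounded region near $y$ so that the image becomes $\Omega_1^y$ rather than $\Omega_g^y$. Since $\Gamma$ is identified with $\fX^0$ and acts by left translation, $g=yx^{-1}$ satisfies $gx=y$, and for $\omega\in\Omega_1^x$ one has $x=s^1_k(\omega)$. Because $g$ carries the sector $[1,\omega)$ to $[g,g\omega)$, a direct check gives $g\Omega_1^x=\{\,\omega':y\in[g,\omega')\,\}=\Omega_g^y$. Thus $\omega\mapsto g\omega$ is already an a.e.\ bijection of $\Omega_1^x$ onto $\Omega_g^y$ with $g\omega\in\Gamma\omega$; the task is to reconcile $\Omega_g^y$ with the desired target $\Omega_1^y$, and then to extend by the inverse map and the identity.

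The main obstacle is precisely this base-point discrepancy $\Omega_g^y\neq\Omega_1^y$. To control it I would invoke Proposition~\ref{P1}: for any $\omega$ the sectors $[1,\omega)$ and $[g,\omega)$ coincide beyond depth $d(1,g)$, so $\Omega_1^y$ and $\Omega_g^y$ can differ only inside a bounded neighbourhood of $y$ of radius comparable to $|g|$. Concretely, refine to the uniform level $(D,\dots,D)$ with $D$ large: by Lemma~\ref{rn5} (applied with the two base points $1$ and $g$, in both directions) the deep cylinders $\Omega_1^c$ and $\Omega_g^c$ agree, so each of $\Omega_1^y$ and $\Omega_g^y$ is a finite disjoint union of cylinders $\Omega_1^c$, and their symmetric difference is a finite union of such cylinders whose defining vertices $c$ differ only within distance $\approx|g|$ of $y$. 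One then builds a correction $\psi\in[\Gamma]$, supported on $\Omega_g^y\cup\Omega_1^y$, pairing the finitely many cylinders of $\Omega_g^y\setminus\Omega_1^y$ with those of $\Omega_1^y\setminus\Omega_g^y$ and realising each pairing $\Omega_1^c\to\Omega_1^{c'}$ by the group element $c'c^{-1}$. The paired vertices satisfy $d(c,c')\le 2|g|$ while their level $(D,\dots,D)$ may be taken arbitrarily large compared with $|g|$, so Lemma~\ref{rn5} guarantees $c'c^{-1}\,\Omega_1^c=\Omega_1^{c'}$; hence $\psi$ is a genuine piecewise-translation bijection lying in $[\Gamma]$.

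Set $\Phi=\psi\circ g$ on $\Omega_1^x$; by construction $\Phi$ is an a.e.\ bijection of $\Omega_1^x$ onto $\Omega_1^y$ with $\Phi\omega\in\Gamma\omega$. Assuming, as in the intended application $x,y\in S_k$, that $\Omega_1^x$ and $\Omega_1^y$ are disjoint, define $\varphi$ to equal $\Phi$ on $\Omega_1^x$, to equal $\Phi^{-1}$ on $\Omega_1^y$, and to be the identity on $\Omega\setminus(\Omega_1^x\cup\Omega_1^y)$. This is a bijection of $\Omega$ satisfying conditions (2) and (3), and it lies in $[\Gamma]$ because on each cylinder of its defining partition it agrees with a single element of $\Gamma$, which gives (1).

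Finally, when $k=k'$ the map $\Phi$ is the shape-preserving bijection that carries the continuation $[x,\omega)$ to $[y,\Phi\omega)$ via $g$; at the level of cylinders it sends $\Omega_1^{xw}$ to $\Omega_1^{yw}$ for every word $w$ in normal form beyond the base vertex (on the deep part this is $g\Omega_1^{xw}=\Omega_g^{yw}=\Omega_1^{yw}$, and the finite correction $\psi$ only permutes cylinders of equal profile). Since $x,y\in S_k$ lie at the same level, $xw$ and $yw$ have the same $\ZZ_+^n$-profile, whence $\nu_1(\Omega_1^{xw})=\nu_1(\Omega_1^{yw})$ by the formula for $|S_k|$. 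As this identity holds simultaneously at every level and $\Phi$ respects the nesting of cylinders, $\Phi$ preserves $\nu_1$ on a generating algebra, hence on all Borel sets, so $\varphi$ is measure preserving. The same computation, via the Radon--Nikodym factor of Lemma~\ref{rn6}, shows that the measure is genuinely distorted when $k\ne k'$, consistent with the restriction of the last assertion to the case $k=k'$.
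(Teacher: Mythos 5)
Your opening reduction is fine: $g=yx^{-1}$ does carry $\Omega_1^x$ onto $\Omega_g^y$, and both $\Omega_1^y$ and $\Omega_g^y$ are finite disjoint unions of sufficiently deep cylinders $\Omega_1^c=\Omega_g^c$. The gap is in the correction $\psi$. The element $h=c'c^{-1}$ sends $\Omega_1^c$ to $\Omega_h^{c'}$, \emph{not} to $\Omega_1^{c'}$; to identify $\Omega_h^{c'}$ with $\Omega_1^{c'}$ via Lemma \ref{rn5} you would need every coordinate of $c'$ (viewed from the base vertex $1$) to be at least $d(1,h)=|c'c^{-1}|$. The distance you control, $d(c,c')=\ell(c^{-1}c')$, is not $\ell(c'c^{-1})$ (these are lengths of conjugate elements and can differ wildly), and in fact $|c'c^{-1}|$ grows like twice the level $D$ of the cylinders, so taking $D$ large makes the hypothesis of Lemma \ref{rn5} \emph{harder} to satisfy, not easier. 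In other words, the base-point discrepancy you set out to fix reappears verbatim at the level of the paired cylinders: ``map $\Omega_1^c$ onto $\Omega_1^{c'}$ by an element of $[\Gamma]$'' is an instance of the very lemma being proved, so the argument is circular. A secondary problem is that $\Omega_g^y\setminus\Omega_1^y$ and $\Omega_1^y\setminus\Omega_g^y$ generally have different $\nu_1$-measures (the Radon--Nikodym derivative of $g$ is not $1$ on $\Omega_1^x$ even when $k=k'$), so they need not even contain the same number of level-$D$ cylinders and the proposed pairing need not exist combinatorially. The final measure-preservation argument inherits the same false identity $\Omega_g^{yw}=\Omega_1^{yw}$ for shallow $w$.

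This obstruction is precisely why the paper's proof does not use one global translation plus a finite correction. Instead it decomposes $\Omega_1^x$ into pieces $\Omega_x^{x_1}$, and for each piece constructs (via the chambers $C_x=x_2C$, $C_y=y_2C$ and Lemma \ref{E3}) a \emph{sub}-cylinder $\Omega_{x_2}^{x_3}$, occupying only a proportion $1/K$ of $\Omega_x^{x_1}$, which a single group element $y_2x_2^{-1}$ genuinely carries onto a corresponding sub-cylinder of the target; it then iterates on the remaining $K-1$ pieces, so that $\varphi$ is defined only up to a leftover of proportion $(1-1/K)^n\to 0$. The ``almost everywhere'' in the statement is not a removable blemish but the price of this infinite exhaustion, and no finitely-supported correction of a single translation can replace it.
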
 
 
\Proof.  Let $\delta=e_1+e_n=(1,0,\ldots,0,1)$ and consider the set of all 
vertices $x_1\in S_{k+\delta}$ such that 
 $x\in\conv\{1,x_1\}$.  For such a vertex $x_1$, one has that $x_1\in S_\delta(x)$  
and $\Omega_1^{x_1}=\Omega_x^{x_1}$.  Thus  
$\Omega^x_1$ is a (disjoint) union of sets of the form 
$\Omega^{x_1}_1=\Omega^{x_1}_x$, where  
$x_1\in S_\delta(x)\cap S_{k+\delta}$ is constructed using the procedure  
of Lemma \ref{E3}. 
 
Similarly, $\Omega^y_1$ is a disjoint union of sets of the form 
$\Omega^{y_1}_1=\Omega^{y_1}_y$, where  
$y_1\in S_\delta(y)\cap S_{k'+\delta}$.  Refer to Figure \ref{Ef3}  
below. 

\refstepcounter{picture} 
\begin{figure}[htbp]\label{Ef3} 
\hfil{} 
\font\thinlinefont=cmr5 
\begingroup\makeatletter\ifx\SetFigFont\undefined%
\gdef\SetFigFont#1#2#3#4#5{%
  \reset@font\fontsize{#1}{#2pt}%
  \fontfamily{#3}\fontseries{#4}\fontshape{#5}%
  \selectfont}%
\fi\endgroup%
\centerline{\mbox{\beginpicture 
\setcoordinatesystem units <0.25000cm,0.25000cm> 
\unitlength=0.25000cm 
\linethickness=1pt 
\setplotsymbol ({\makebox(0,0)[l]{\tencirc\symbol{'160}}}) 
\setshadesymbol ({\thinlinefont .}) 
\setlinear 
%
%
\linethickness= 0.500pt 
\setplotsymbol ({\thinlinefont .}) 
\plot  5.080 -3.810  7.279  0.000 / 
\putrule from  7.279  0.000 to  2.881  0.000 
\plot  2.881  0.000  5.080 -3.810 / 
%
%
\linethickness= 0.500pt 
\setplotsymbol ({\thinlinefont .}) 
\plot  5.080 11.430  7.279 15.240 / 
\plot  7.279 15.240  9.478 11.430 / 
\putrule from  9.478 11.430 to  5.080 11.430 
%
%
\linethickness= 0.500pt 
\setplotsymbol ({\thinlinefont .}) 
\plot  5.080  3.810  7.279  0.000 / 
\putrule from  7.279  0.000 to  2.881  0.000 
\plot  2.881  0.000  5.080  3.810 / 
%
%
\linethickness= 0.500pt 
\setplotsymbol ({\thinlinefont .}) 
\plot  5.080 11.430  7.279  7.620 / 
\plot  7.279  7.620  9.478 11.430 / 
\putrule from  9.478 11.430 to  5.080 11.430 
%
%
\linethickness= 0.500pt 
\setplotsymbol ({\thinlinefont .}) 
\plot  5.086  3.797  7.277  7.626 / 
%
%
\linethickness= 0.500pt 
\setplotsymbol ({\thinlinefont .}) 
\plot 22.273 -3.810 24.472  0.000 / 
\putrule from 24.472  0.000 to 20.074  0.000 
\plot 20.074  0.000 22.273 -3.810 / 
%
%
\linethickness= 0.500pt 
\setplotsymbol ({\thinlinefont .}) 
\plot 22.273 11.430 24.472 15.240 / 
\plot 24.472 15.240 26.671 11.430 / 
\putrule from 26.671 11.430 to 22.273 11.430 
%
%
\linethickness= 0.500pt 
\setplotsymbol ({\thinlinefont .}) 
\plot 22.273  3.810 24.472  0.000 / 
\putrule from 24.472  0.000 to 20.074  0.000 
\plot 20.074  0.000 22.273  3.810 / 
%
%
\linethickness= 0.500pt 
\setplotsymbol ({\thinlinefont .}) 
\plot 22.273 11.430 24.472  7.620 / 
\plot 24.472  7.620 26.671 11.430 / 
\putrule from 26.671 11.430 to 22.273 11.430 
%
%
\linethickness= 0.500pt 
\setplotsymbol ({\thinlinefont .}) 
\plot 22.279  3.797 24.470  7.626 / 
%
%
\put{\SetFigFont{8}{14.4}{\rmdefault}{\mddefault}{\updefault}$y_3$} [lB] at 25.082 15 
%
%
\put{\SetFigFont{8}{14.4}{\rmdefault}{\mddefault}{\updefault}$C'_y$} [lB] at 23.7 12.5 
%
%
\put{\SetFigFont{8}{14.4}{\rmdefault}{\mddefault}{\updefault}$C_y$} [lB] at 23.7  9.7 
%
%
\put{\SetFigFont{8}{14.4}{\rmdefault}{\mddefault}{\updefault}$y_1zv_n$} [lB] at 27.2 11.35 
%
%
\put{\SetFigFont{8}{14.4}{\rmdefault}{\mddefault}{\updefault}$y_1zv_np_i$} [lB] at 17.8 11.35 
%
%
\put{\SetFigFont{8}{14.4}{\rmdefault}{\mddefault}{\updefault}$y_1z=y_2$} [lB] at 25.082  7.5
%
%
\put{\SetFigFont{8}{14.4}{\rmdefault}{\mddefault}{\updefault}$y_1$} [lB] at 20.793  3.75 
%
%
\put{\SetFigFont{8}{14.4}{\rmdefault}{\mddefault}{\updefault}$y$} [lB] at 22.893 -4.2 
%
%
\put{\SetFigFont{8}{14.4}{\rmdefault}{\mddefault}{\updefault}$x$} [lB] at  5.7 -4.2 
%
%
\put{\SetFigFont{8}{14.4}{\rmdefault}{\mddefault}{\updefault}$x_1$} [lB] at  3.6  3.75 
%
%
\put{\SetFigFont{8}{14.4}{\rmdefault}{\mddefault}{\updefault}$x_2$} [lB] at  7.9  7.5 
%
%
\put{\SetFigFont{8}{14.4}{\rmdefault}{\mddefault}{\updefault}$x_2v_n$} [lB] at  10 11.35 
%
%
\put{\SetFigFont{8}{14.4}{\rmdefault}{\mddefault}{\updefault}$x_2v_np_i$} [lB] at  1.1 11.35 
%
%
\put{\SetFigFont{8}{14.4}{\rmdefault}{\mddefault}{\updefault}$x_3$} [lB] at  7.9 15 
%
%
\put{\SetFigFont{8}{14.4}{\rmdefault}{\mddefault}{\updefault}$C_x$} [lB] at  6.6  9.7 
%
%
\put{\SetFigFont{8}{14.4}{\rmdefault}{\mddefault}{\updefault}$C'_x$} [lB] at  6.6 12.5 
\linethickness=0pt 
\putrectangle corners at  2.855 15.513 and 27.116 -4.248 
\endpicture}} 
\hfill{} 
\caption{} 
\end{figure}
 
It is therefore enough to show that for every such $x_1$, $y_1$,  
there is a measure preserving bijection  
$\varphi:\Omega^{x_1}_x\to\Omega^{y_1}_y$ which coincides pointwise 
with the action of $\Gamma$  almost  
everywhere on $\Omega^{x_1}_x$.  That is, for almost 
each $\omega\in\Omega^{x_1}_x$, there exists $g\in\Gamma$ such that 
$\varphi\omega=g\omega$. 
 
Fix such $x_1$, $y_1$.  Choose $x_2\in S_{e_n}(x_1)\cap S_{k+\delta+e_n}$.  Also choose  
$v_n\in S_{e_n}$ such that  
$$x_2 v_n\in S_{2e_n}(x_1)\cap S_{k+\delta+2e_n}\,.$$ 
Since $y_1\in S_{k'+\delta}$, it has normal form 
$$y_1=u_1\ldots u_l\qquad\hbox{where }u_l\in S_{e_n}\,.$$ 
 
We now show that there exists $z\in S_{e_n}$ such that  
\begin{equation}\label{Eeq1} 
u^{-1}_l\vee z=\Pi\qquad\hbox{and}\qquad z^{-1}\vee v_n=\Pi\,. 
\end{equation} 
 
To prove the claim, it is necessary to make use of the identification of the  
generators of $\Gamma$ with elements of the finite projective  
space $\Pi$.  Set $\Pi_r=\{x\in\Pi:\dim x=r\}=S_{e_r}$. 
 
Now, $u_l^{-1}\in\Pi_1$, and $v_n\in\Pi_n$.  Therefore 
\begin{description} 
\item[(a)] $|\{z\in\Pi_n:u_l^{-1}\vee z\neq \Pi\}| 
=|\{z\in\Pi_n:u_l^{-1}\subset z\}|=1+q+q^2+\cdots+q^{n-1}$. 
\item[(b)] 
$|\{z\in\Pi_n:z^{-1}\vee v_n\neq \Pi\}| 
=|\{z\in\Pi_n:z^{-1}\subset v_n\}|=1+q+q^2+\cdots+q^{n-1}$. 
\end{description} 
 
Also,  
$$|\Pi_n|=1+q+q^2+\cdots+q^n>2(1+q+\cdots+q^{n-1})\,.$$ 
Hence there exists $z\in\Pi_n$ such that (\ref{Eeq1}) holds. 
 
It follows that the word $y_1zv_n=u_1\ldots u_lzv_n$ is in 
normal form and hence that 
$$y_1zv_n\in S_{k'+\delta+2e_n}\,.$$ 
Moreover, $y_2=y_1z\in S_{k'+\delta+e_n}$. 
 
It will now be shown that the chambers $C_x$, $C_y$ can be constructed 
which lie as indicated in (the two dimensional) Figure \ref{Ef3}. 
By this it is meant, for example, that if $\omega\in\Omega$ and 
$C_x\subset S_{x_2}(\omega)$ then  
$S_{x_2}(\omega)\subset S_{x_1}(\omega)\subset S_x(\omega)$. 
In fact, $C_x=x_2C$ and $C_y=y_1zC$, where $C$ is the chamber 
based at 1, as illustrated in Figure \ref{Ef4} (in two dimensions).

\refstepcounter{picture} 
\begin{figure}[htbp]\label{Ef4} 
\hfill{} 
\font\thinlinefont=cmr5 
\begingroup\makeatletter\ifx\SetFigFont\undefined%
\gdef\SetFigFont#1#2#3#4#5{%
  \reset@font\fontsize{#1}{#2pt}%
  \fontfamily{#3}\fontseries{#4}\fontshape{#5}%
  \selectfont}%
\fi\endgroup%
\centerline{\mbox{\beginpicture 
\setcoordinatesystem units <0.40000cm,0.40000cm> 
\unitlength=0.40000cm 
\linethickness=1pt 
\setplotsymbol ({\makebox(0,0)[l]{\tencirc\symbol{'160}}}) 
\setshadesymbol ({\thinlinefont .}) 
\setlinear 
%
%
\linethickness= 0.500pt 
\setplotsymbol ({\thinlinefont .}) 
\putrule from  3.190 19.689 to  8.270 19.689 
\plot  8.270 19.689  5.730 15.289 / 
\plot  5.730 15.289  3.190 19.689 / 
%
%
\linethickness= 0.500pt 
\setplotsymbol ({\thinlinefont .}) 
\putrule from  3.175 19.685 to  8.255 19.685 
\plot  8.255 19.685  5.715 24.086 / 
\plot  5.715 24.086  3.175 19.685 / 
%
%
\put{\SetFigFont{12}{14.4}{\rmdefault}{\mddefault}{\updefault}$C$} [lB] at  5.321 17.66 
%
%
\put{\SetFigFont{12}{14.4}{\rmdefault}{\mddefault}{\updefault}1} [lB] at  6.1 14.899 
%
%
\put{\SetFigFont{12}{14.4}{\rmdefault}{\mddefault}{\updefault}$v_n$} [lB] at  8.7 19.5 
%
%
\put{\SetFigFont{12}{14.4}{\rmdefault}{\mddefault}{\updefault}$v_np_i$} [lB] at  1.2 19.5 
%
%
\put{\SetFigFont{12}{14.4}{\rmdefault}{\mddefault}{\updefault}$w$} [lB] at  6.151 23.846 
\linethickness=0pt 
\putrectangle corners at  1.064 24.282 and  8.572 14.707 
\endpicture}} 
\hfill{} 
\caption{} 
\end{figure} 

The vertices $x_2v_np_i$, $2\leq i\leq n$, will be constructed from a flag  
$v_n^{-1}\subset p_2\subset p_3\subset\cdots\subset p_n$, where 
$p_i\in\Pi_i$. 
For the following argument, note that if $b\in\Pi_{r-1}$, where $r\geq 2$, then  
$$|\{a\in\Pi_r:a\superset b\}|=1+q+\cdots+q^{n-r+1}\geq 1+q\,.$$ 
 
There are at least $1+q$ elements $p_2\in\Pi_2$ such that  
$p_2\superset v_n^{-1}$.  By reference to both Lemma 2.4 and the proof of  
Proposition 2.7 in \cite{C99}, there is precisely one such $p_2$  
such that $|x_2v_np_2|<|x_2v_n|$. 
(In fact, in that case $x_2v_np_2\in S_{k+\delta+2e_n-e_{n-1}}$.) 
 
Similarly, there is precisely one $p_2\in\Pi_2$, $p_2\superset v_n^{-1}$ 
such that $|y_1zv_np_2|<|y_1zv_n|$. 
 
One can therefore choose $p_2\in\Pi_2$ with $p_2\superset v_n^{-1}$ 
such that $|x_2v_np_2|\geq|x_2v_n|$ and $|y_1zv_np_2|\geq|y_1zv_n|$. 
Moreover, since $v_np_2$ is then adjacent to 1, these inequalities are in 
fact equalities. 
  
This process is now continued. 
There are at least $1+q$ elements $p_3\in\Pi_3$ such that 
$p_3\superset p_2$ and at most two of them satisfy either 
$|x_2v_np_3|<|x_2v_n|$ or $|y_1zv_np_2|<|y_1zv_n|$. 
Thus we may choose $p_3\superset p_2$ such that 
$|x_2v_np_3|=|x_2v_n|$ and $|y_1zv_np_3|=|y_1zv_n|$. 
 
Continue in this way to obtain a flag 
$$v_n^{-1}\subset p_2\subset p_3\subset\cdots\subset p_n$$ 
such that the vertex set of the chamber $C$ is 
$$\{1,v_n,v_np_2,v_np_3,\ldots,v_np_n\}\,.$$ 
Then $C_x=x_2C$ and $C_y=y_2 C$. 
 
Now choose, by Lemma \ref{E3}, a vertex $w$ (one of $q$ possible) 
of a chamber $C_x'$ which meets $C_x$ in the face $C_x\backslash\{1\}$. 
Thus $w\in S_\delta$, and $x_3=x_2w\in S_{k+2\delta+e_n}$. 
Also $y_3=y_2w\in S_{k'+2\delta+e_n}$.  (Recall that, by definition, 
$y_2=y_1z$.)   Moreover, $y_2x_2^{-1}C_x=C_y$. 
 
It has now been shown that  
$$\Omega^{x_3}_{x_2}\subset\Omega^{x_1}_x,\qquad 
    \Omega^{y_3}_{y_2}\subset\Omega^{y_1}_y\qquad\hbox{and}\qquad 
    y_2x_2^{-1}\Omega^{x_3}_{x_2}=\Omega^{y_3}_{y_2}\,.$$ 
Therefore one can define the map $\varphi$ on $\Omega^{x_3}_{x_2}$ by 
$$\varphi \omega=y_2x_2^{-1}\omega\,.$$ 
 
Now recall that $x\in S_k$, $y\in S_{k'}$ and $x_1\in S_{k+\delta}$, 
$y_1\in S_{k'+\delta}$ 
were fixed, and that   
$x_2\in S_{e_n}(x_2)\cap S_{k+\delta+e_n}$ was chosen.  
The set $\Omega^{x_1}_x$  
is a disjoint union of sets of the form $\Omega^{x_3}_{x_2}$ where 
$x_3\in S_\delta(x_2)$.  Let $K$ denote the number of such sets.   
This number is independent of the choice of $x$, $x_1$ and $k$ by  
\cite[Lemma 2.4]{C99}, (or by the fact that $\Gamma$ acts 
simply transitively on $\fX^0$). 
 
The definition $\varphi \omega=y_2x_2^{-1}\omega$ in the above choice of  
$\Omega^{x_3}_{x_2}$ therefore leaves $\varphi$ undefined on a proportion  
$(1-\frac{1}{K})$ of $\Omega_x^{x_1}$.  However, where $\varphi$ is defined  
it coincides with the action of an element of $\Gamma$, namely  
$y_2x_2^{-1}$. 
 
Now repeat the process on each of the $K-1$ sets of the form  
$\Omega^{x_3}_{x_2}$ where $\varphi$ has not been defined.  As before, 
$\varphi$ can be defined except on a proportion $(1-\frac{1}{K})$ of each such set, 
and $\varphi$ can therefore be defined everywhere except on a proportion 
$(1-\frac{1}{K})^2$ of the original set $\Omega_x^{x_1}$. 
 
Continuing in this manner, one sees that at the $n^{th}$ step, $\varphi$  
has been defined everywhere except on a proportion $(1-\frac{1}{K})^n$  
of $\Omega^{x_1}_x$. 
 
Since $(1-\frac{1}{K})^n\to0$ as $n\to\infty$, $\varphi$ is defined  
almost everywhere on $\Omega^{x_1}_x$ and satisfies the required  
properties. If $k=k'$ then it is clear from the construction that $\varphi$ is measure preserving.
\qed

\begin{remark} 
This result extends \cite[Proposition 4.9]{RR}.  Moreover,   
for $n=2$ this proof deals with the case $q=2$, which was 
left open in \cite{RR}.  Thus the hypothesis $q\geq3$ in the main result  
Theorem 4.10 of \cite{RR} is not in fact necessary. 
\end{remark}
 
We can now prove the following. 
 
\begin{proposition}\label{E5} 
There exists a countable subgroup $K_0$ of $[\Gamma]$ such that 
\begin{enumerate} 
\item $K_0$ is measure preserving; 
\item $K_0$ acts ergodically on $\Omega$. 
\end{enumerate} 
\end{proposition}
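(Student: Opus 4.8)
The plan is to assemble Proposition~\ref{E5} directly from the machinery already developed, treating it essentially as a packaging result. The target is a countable, measure-preserving, ergodic subgroup $K_0\le[\Gamma]$. The strategy is to first produce a (possibly uncountable) subgroup $K\le\Aut(\Omega)$ with the right transitivity and measure-preserving properties, then invoke the countability reduction of Lemma~\ref{E2}, and finally read off ergodicity from Lemma~\ref{E1}.

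First I would let $K$ be the subgroup of $[\Gamma]$ generated by all the automorphisms $\varphi$ produced by Lemma~\ref{E8} in the special case $k=k'$. By the final clause of Lemma~\ref{E8}, each such $\varphi$ is measure preserving, and by properties (1)--(3) it lies in $[\Gamma]$, is a bijection $\Omega_1^x\to\Omega_1^y$ off a null set, and is the identity elsewhere. Since the measure-preserving elements $[\Gamma]_0$ form a subgroup, the whole group $K$ generated by these $\varphi$ is measure preserving, so $K\le[\Gamma]_0$. Applying Lemma~\ref{E8} with $x,y$ ranging over a common $S_m$ gives, for every $m\in\ZZ_+^n$ and every pair $x,y\in S_m$, an element of $K$ carrying $\Omega_1^x$ onto $\Omega_1^y$; hence $K$ acts transitively on each collection $\{\Omega_1^x:x\in S_m\}$.

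Next I would apply Lemma~\ref{E2} to this $K$. That lemma yields a countable subgroup $K_0\le K$ which still acts transitively on each family $\{\Omega_1^x:x\in S_m\}$ for every $m\in\ZZ_+^n$. Being a subgroup of $K\le[\Gamma]_0$, the group $K_0$ is automatically countable, measure preserving, and contained in $[\Gamma]$, which secures conclusion~(1). Finally, since $K_0$ acts transitively on $\{\Omega_1^x:x\in S_k\}$ for every $k$, Lemma~\ref{E1} applies verbatim and gives ergodicity of the $K_0$-action on $\Omega$, which is conclusion~(2).

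There is no serious obstacle here: all the analytic and combinatorial work was absorbed into Lemma~\ref{E8}, whose proof handles the delicate construction of the measure-preserving bijections (including the flag-counting that guarantees the auxiliary vertex $z$ and the chambers $C_x,C_y$ exist). The only point that requires a moment's care is verifying that $K$ is genuinely measure preserving as a \emph{group}: one must note that if $\varphi,\psi$ preserve each $\nu_y$ then so do $\varphi\psi$ and $\varphi^{-1}$, i.e.\ that $[\Gamma]_0$ is closed under the group operations, which is immediate from the definition $\nu_y\circ T=\nu_y$. With that observation the three lemmas chain together cleanly to prove the proposition.
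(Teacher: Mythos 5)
Your proposal is correct and follows essentially the same route as the paper's own proof: take the group generated by the measure-preserving automorphisms $\varphi$ from Lemma~\ref{E8} with $k=k'$, extract a countable subgroup via Lemma~\ref{E2}, and conclude ergodicity from Lemma~\ref{E1}. The only difference is that you spell out the (easy) verification that $[\Gamma]_0$ is closed under the group operations, which the paper leaves implicit.
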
 
 
\Proof. It suffices to take the group generated by the  
automorphisms of the form $\varphi$ defined in Lemma \ref{E8}, with $k=k'$ 
then use Lemma \ref{E2} to extract a countable subgroup 
$K_0$.  Finally, Lemma \ref{E1} shows that the action of $K_0$ 
is ergodic.\qed 
 
\begin{lemma}\label{E6} 
Let  
$\Gamma$ be a countable group acting on a measure space  
$(\Omega,\mu)$.  Suppose that the action of the full group $[\Gamma]$ 
is ergodic.  Then so is the action of $\Gamma$. 
\end{lemma}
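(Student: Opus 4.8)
The plan is to show that the $\Gamma$-invariant measurable subsets of $\Omega$ coincide, modulo null sets, with the $[\Gamma]$-invariant ones, so that ergodicity of $[\Gamma]$ immediately forces ergodicity of $\Gamma$. The essential point is that an element $T\in[\Gamma]$ moves almost every point within its own $\Gamma$-orbit, and hence any set which is a union of full $\Gamma$-orbits is automatically preserved by $T$.

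First I would reduce to strictly invariant sets. Let $X\subseteq\Omega$ be a Borel set with $\mu(\gamma X\bigtriangleup X)=0$ for every $\gamma\in\Gamma$. Because $\Gamma$ is countable, the set $X_0=\bigcap_{\gamma\in\Gamma}\gamma X$ satisfies $\mu(X\setminus X_0)\le\sum_{\gamma\in\Gamma}\mu(X\setminus\gamma X)=0$, so $\mu(X\bigtriangleup X_0)=0$, and a reindexing of the intersection shows $\gamma' X_0=X_0$ for every $\gamma'\in\Gamma$. Thus $X_0$ is genuinely $\Gamma$-invariant, is a union of full $\Gamma$-orbits, and agrees with $X$ up to a null set.

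Next I would transfer the invariance to $[\Gamma]$. Fix $T\in[\Gamma]$. By definition there is a conull set on which $T\omega\in\Gamma\omega$. For almost every $\omega\in X_0$ we then have $T\omega\in\Gamma\omega\subseteq X_0$, since $X_0$ contains the entire orbit of each of its points; hence $\mu(TX_0\setminus X_0)=0$. Applying the same reasoning to $T^{-1}\in[\Gamma]$ gives $\mu(T^{-1}X_0\setminus X_0)=0$, and combining the two yields $\mu(TX_0\bigtriangleup X_0)=0$. Therefore $X_0$ is $[\Gamma]$-invariant.

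Finally, ergodicity of $[\Gamma]$ gives $\mu(X_0)=0$ or $\mu(\Omega\setminus X_0)=0$, and since $\mu(X\bigtriangleup X_0)=0$ the same dichotomy holds for $X$. As $X$ was an arbitrary $\Gamma$-invariant set, the action of $\Gamma$ is ergodic. The only delicate point is the bookkeeping with null sets: one must invoke countability of $\Gamma$ both to pass from invariance modulo null sets to strict invariance and to control the exceptional sets arising from the ``almost every $\omega$'' clause in the definition of $[\Gamma]$. Beyond that the argument is entirely formal.
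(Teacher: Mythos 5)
Your argument is correct and rests on the same key observation as the paper's proof, namely that any $T\in[\Gamma]$ acts pointwise almost everywhere as an element of $\Gamma$ and therefore preserves (mod null) any $\Gamma$-invariant set; the paper implements this by decomposing $S=S_0\cup\bigcup_{g\in\Gamma}S_g$ with $S_g=\{\omega\in S: T\omega=g\omega\}$ and summing the null error terms, whereas you first regularize to the strictly invariant set $X_0=\bigcap_{\gamma}\gamma X$, which is only a cosmetic difference in the null-set bookkeeping. Both versions use countability of $\Gamma$ in exactly the same way, so your proposal is essentially the paper's proof.
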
 
 
\Proof.  Let $S$ be a measurable subset of $\Omega$ such that  
$\mu(gS\backslash S)=0$ for all $g\in\Gamma$. 
Let $k\in [\Gamma]$.  It will be shown that $\mu(kS\backslash S)=0$.   
For each $g\in\Gamma$,  
let  
$$S_g=\{\omega\in S:k\omega=g\omega\}\,,$$ 
which is a measurable subset of $S$.  Since $k\in[\Gamma]$ it follows that 
$$S=S_0\cup\bigcup_{g\in\Gamma}S_g\,,$$ 
where $S_0$ has measure zero. 
 
Then  
\begin{eqnarray*} 
\mu(kS\backslash S)&\leq&\sum_{g\in \Gamma}\mu(kS_g\backslash S)\\ 
&=&\sum_{g\in\Gamma}\mu(gS_g\backslash S)\\ 
&\leq&\sum_{g\in\Gamma}\mu(gS\backslash S)=0\,. 
\end{eqnarray*} 
Thus $\mu(kS\backslash S)=0$ for all $k\in[\Gamma]$.  Since the 
action of $[\Gamma]$ is ergodic it follows that $S$ is 
either null or conull with respect to the measure $\mu$. 
Therefore the action of $\Gamma$ is ergodic.\qed

\begin{corollary}\label{E7} 
The action of $\Gamma$ on $\Omega$ is ergodic. 
\end{corollary}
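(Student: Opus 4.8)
The plan is to chain together the two results immediately preceding the corollary. By Proposition~\ref{E5} there is a countable subgroup $K_0\le[\Gamma]$ which is measure preserving for $\nu_1$ and acts ergodically on $\Omega$. The whole substance of the argument has already been carried out in constructing this $K_0$ (via Lemma~\ref{E8} and Proposition~\ref{E5}) and in establishing the descent Lemma~\ref{E6}; here one only has to assemble these pieces.

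First I would promote ergodicity of $K_0$ to ergodicity of the full group $[\Gamma]$. The key observation is that ergodicity is monotone in the acting group: any Borel set $X\subseteq\Omega$ that is invariant under the larger group $[\Gamma]$ is, since $K_0\le[\Gamma]$, in particular invariant under $K_0$, and hence null or conull by ergodicity of $K_0$. Therefore $[\Gamma]$ itself acts ergodically on $(\Omega,\nu_1)$. Next I would apply Lemma~\ref{E6} with $\mu=\nu_1$: the group $\Gamma$ is countable, being identified with the vertex set $\fX^0$, and we have just shown that the action of the full group $[\Gamma]$ is ergodic, so the lemma yields that the action of $\Gamma$ is ergodic with respect to $\nu_1$.

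Finally, because the measures $\{\nu_y\}_{y\in\fX^0}$ are mutually absolutely continuous by Lemma~\ref{rn6}, a Borel set is null (equivalently conull) for one $\nu_y$ if and only if it is so for every $\nu_y$. Ergodicity therefore holds for the entire measure class, and the action of $\Gamma$ on $\Omega$ is ergodic as claimed.

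There is no real obstacle in this step. The only point that deserves a word of care is the direction of the implication ``ergodic for a subgroup $\Rightarrow$ ergodic for the overgroup,'' which runs opposite to the naive intuition and is precisely what makes the detour through the intermediate group $[\Gamma]$ worthwhile: one builds a small, tractable, \emph{measure preserving} ergodic group $K_0$, inflates its ergodicity up to $[\Gamma]$ for free, and then descends to $\Gamma$ through Lemma~\ref{E6}.
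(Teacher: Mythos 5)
Your argument is exactly the paper's: Proposition~\ref{E5} supplies the countable measure preserving ergodic subgroup $K_0\le[\Gamma]$, ergodicity passes up to $[\Gamma]$ by the trivial monotonicity observation, and Lemma~\ref{E6} descends from $[\Gamma]$ to $\Gamma$. The proposal is correct and matches the paper's proof.
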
 
 
\Proof. This follows from Proposition \ref{E5} and 
Lemma \ref{E6}.\qed

\bigskip
\section{Classification of the action of $\Gamma$ on $\Omega$}\label{classification}

Having shown that the action of an $\tilde A_n$ group on its boundary
$\Omega$ is ergodic, we now show that it is type $III_{\lambda}$, where 
$0\leq\lambda\leq 1$, and  the value of $\lambda$ depends 
on the {\em ratio set}.

\begin{definition}\label{rn1} 
Let $\Gamma$ be a countable group of automorphisms of 
the measure space $(\Omega,\nu)$. 
Following Krieger, define the {\it ratio set} $r(\Gamma)$ 
to be the set of $\lambda\in[0,\infty)$ such that 
for every $\epsilon>0$  
and Borel set $\cE$ 
with $\nu(\cE)>0$, there exists a $g\in\Gamma$ and a  Borel 
set $\cF$ such that $\nu(\cF)>0$, 
$\cF\cup g\cF\subset\cE$ and  
$$\left|\frac{d\nu\circ g}{d\nu}(\omega)-\lambda\right|<\epsilon\,$$ 
for all $\omega\in\cF$. 
\end{definition} 
 
\begin{remark}\label{rn7} 
The ratio set $r(\Gamma)$ depends only on the quasi-equivalence 
class of the measure $\nu$ \cite[section I-3, Lemma 14]{HO}. It also depends only 
on the full group in the sense that 
$$[H]=[G]\implies r(H)=r(G)\,.$$ 
\end{remark} 
 
\begin{proposition}\label{rn8} 
Let $\fX$ be a locally finite, thick $\tilde A_n$ building of order
$q$, and let $\Gamma$ be a countable group of type rotating automorphisms of $\fX$. 
Fix a vertex $O\in\fX^0$ of type $0$, and suppose  
that for each $0\leq i\leq n$ there exists an element 
$g_i\in\Gamma$ such that  
$d(g_iO,O)=1$ and $g_iO$ is a vertex of type $i$.  Also, 
suppose that there exists a countable 
subgroup $K$ of $[\Gamma]_0$ whose action on $\Omega$ is ergodic.  
Then  
$$r(\Gamma)=\left\{\begin{matrix} 
        \{q^{n}:n\in\ZZ\}\cup\{0\}&\hbox{for $n$ odd}\\ 
        \{q^{2n}:n\in\ZZ\}\cup\{0\}&\hbox{for $n$ even} 
        \end{matrix}\right.\,.$$ 
\end{proposition}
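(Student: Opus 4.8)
The plan is to compute $r(\G)$ directly from the Radon--Nikodym formula of Lemma \ref{rn6}. Write $c_i=i(n+1-i)$ for $1\le i\le n$, so that for any $g\in\G$ the cocycle of the action on $(\Om,\nu_1)$ is $\frac{d\nu_1\circ g}{d\nu_1}(\omega)=q^{-\sum_{i=1}^n c_i m_i}$, where $m=m(1,g^{-1};\omega)\in\ZZ^n$; the same formula holds for every element of $[\G]$, since such an element agrees locally almost everywhere with elements of $\G$. Hence every value of the cocycle, and therefore every nonzero element of $r(\G)$, lies in $\{q^{dj}:j\in\ZZ\}$, where $d=\gcd(c_1,\dots,c_n)$. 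I would then record the elementary fact that $d=1$ for $n$ odd and $d=2$ for $n$ even: indeed $\gcd(c_1,c_2)=\gcd\bigl(n,2(n-1)\bigr)$ divides $2$ and equals $2$ precisely when $n$ is even, while for $n$ even every $c_i$ is even (one of $i,\,n+1-i$ is even because $n+1$ is odd). This already yields the inclusion $r(\G)\subseteq\{q^{dj}:j\in\ZZ\}\cup\{0\}$.

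For the reverse inclusion I would first realise each generating ratio $q^{c_i}$. Taking $O=1$, the hypothesis supplies $g_i\in\G$ with $g_iO$ a neighbour of type $i$, so $g_i\in S_{e_i}$. On the basic set $\Om_1^{g_i}$ one has $g_i=s^1_{e_i}(\omega)$, whence (exactly as in Lemma \ref{rn5}) $m(1,g_i;\omega)=-e_i$, and Lemma \ref{rn6} gives $\frac{d\nu_{g_i}}{d\nu_1}=q^{c_i}$ there. Since $\nu_1\circ g_i^{-1}=\nu_{g_i}$, the transformation $g_i^{-1}$ has cocycle exactly $q^{c_i}$ on the positive-measure set $U_i=\Om_1^{g_i}$.

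To upgrade this to $q^{c_i}\in r(\G)$ I would exploit Remark \ref{rn7}, that $r(\G)$ depends only on $[\G]$, together with the ergodic measure-preserving group $K\le[\G]_0$ produced in Proposition \ref{E5}. Given $\epsilon>0$ and $\cE$ with $\nu_1(\cE)>0$: by ergodicity of $K$ there is $k_1\in K$ with $\nu_1(\cE\cap k_1^{-1}U_i)>0$; applying ergodicity again to the positive-measure set $g_i^{-1}k_1(\cE\cap k_1^{-1}U_i)$ and to $\cE$ yields $k_2\in K$ carrying a positive-measure piece of it into $\cE$. Pulling back, I obtain $\cF\subseteq\cE$ of positive measure with $g\cF\subseteq\cE$, where $g=k_2g_i^{-1}k_1\in[\G]$. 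As $k_1,k_2$ are measure preserving, the cocycle of $g$ equals that of $g_i^{-1}$ evaluated at $k_1\omega\in U_i$, namely $q^{c_i}$, on all of $\cF$; hence $q^{c_i}\in r(\G)$. Since $r(\G)\cap(0,\infty)$ is a closed multiplicative subgroup for an ergodic action \cite{HO}, it contains the subgroup generated by the $q^{c_i}$, which is exactly $\{q^{dj}:j\in\ZZ\}$; combined with the first paragraph this forces equality on the positive part. Running the identical transport argument with $g_i^{-N}$ (cocycle $q^{-Nc_i}\to0$) in place of $g_i^{-1}$ deposits arbitrarily small ratios inside any $\cE$, so $0\in r(\G)$. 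Thus $r(\G)=\{q^{dj}:j\in\ZZ\}\cup\{0\}$ with $d$ as above, which is the asserted statement.

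The main obstacle I anticipate is the transport step: arranging simultaneously that $\cF$ and its image $g\cF$ lie in $\cE$, that $\cF$ has positive measure, and that the cocycle is \emph{exactly} (not merely approximately) $q^{c_i}$ on $\cF$. This rests entirely on the ergodicity of the measure-preserving group $K$, used to connect arbitrary positive-measure sets without disturbing the Radon--Nikodym derivative; obtaining the clean value $q^{c_i}$ rather than an approximation is precisely what makes the closed-subgroup argument applicable. A secondary point requiring care is the verification that the single-step transition vector on $\Om_1^{g_i}$ is exactly $-e_i$, which is what pins down the exponent $c_i$ and hence the arithmetic of $d$.
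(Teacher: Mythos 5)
Your proposal is correct and follows essentially the same route as the paper: realise the ratio $q^{i(n+1-i)}$ via the Radon--Nikodym derivative on $\Omega_O^{g_iO}$ (where $m(O,g_iO;\omega)=-e_i$), transport it into an arbitrary positive-measure set by conjugating with elements of the ergodic measure-preserving group $K\le[\Gamma]_0$, and then use the group structure of $r(\Gamma)\setminus\{0\}$ together with $\gcd\bigl(n,2(n-1)\bigr)$. Your explicit handling of the upper bound via $d=\gcd(c_1,\dots,c_n)$ and of $0\in r(\Gamma)$ merely makes precise points the paper leaves implicit.
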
 
 
\Proof.
By Remark \ref{rn7}, it is sufficient to prove the 
statement for some group $H$ 
such that $[H]=[\Gamma]$.  In particular, since  
$[\Gamma]=[\langle \Gamma,K\rangle]$ for any subgroup $K$ of  
$[\Gamma]_0$, we may assume without loss of generality that 
$K\leq \Gamma$. 

Let $\nu=\nu_O$.  
For  $g_i\in\Gamma$ as in the statement of the lemma,
let  $x=g_iO$, and note that
$\nu_{x}=\nu\circ g_i^{-1}$. 
If ${m}(O,x;\omega)=(m_1,m_2,\ldots,m_n)$ then by Lemma \ref{rn6},  
$$\frac{d\nu\circ g^{-1}_i}{d\nu}(\omega)
    =\frac{d\nu_{x}}{d\nu}(\omega)
    =q^{-\sum_{i=1}^n i(n+1-i) m_i}\,.$$ 

Then for $\omega\in\Omega_O^x$, one has that 
${m}(O,x;\omega)=(0,\ldots,0,-1,0,\ldots,0)$, where the $-1$ is in the
$i^{th}$ place.  Thus 
\begin{equation}\label{rne1}
\frac{d\nu_x}{d\nu}(\omega)=q^{i(n+1-i)}\qquad
                \hbox{for $\omega\in\Omega_O^x$.}
\end{equation}
Let $\cE\subset\Omega$ be a Borel set with $\nu(\cE)>0$.  Then by the
ergodicity of $K$, there exist $k_1$, $k_2\in K$ such that the set
$$\cF=\{\omega\in\cE:k_1\omega\in\Omega_O^x\hbox{ and }
            k_2g_i^{-1}k_1\omega\in\cE\}$$
has positive measure.

Next, let $t=k_2g_i^{-1}k_1\in \G$.  By construction, $\cF\cup t\cF\subset\cE$.
Moreover, since $K$ is measure preserving,
$$\frac{d\nu\circ t}{d\nu}(\omega)
    =\frac{d\nu\circ g^{-1}_i}{d\nu}(k_1\omega)
    =\frac{d\nu_{x_i}}{d\nu}(k_1\omega)=q^{i(n+1-i)}\qquad\hbox{for all
    $\omega\in\cF$}$$
by (\ref{rne1}), and since $k_i\omega\in\Omega_O^x$.  Hence 
$q^{i(n+1-i)}\in r(\Gamma)$ for $1\leq i\leq n$.

Since the action of $\Gamma$ on $\Omega$ is ergodic, $r(\Gamma)-\{0\}$ forms  
a group.  It is now possible to determine the generator
of $r(\Gamma)-\{0\}$.

Suppose that $n$ is odd.  Then for $i\in\{1,2\}$, one has that $q^n$, 
$q^{2(n-1)}\in r(\Gamma)$.  As $n$, $2(n-1)$ are coprime for $n$ odd,
and as $r(\Gamma)-\{0\}$ forms a group, it follows that $q\in r(\Gamma)$,
and hence 
$$r(\Gamma)=\{q^n:n\in\ZZ\}\qquad\hbox{for $n$ odd.}$$

Suppose that $n$ is even.  As before, $q^n$, $q^{2(n-1)}\in r(\Gamma)$.  Moreover,
as highest common factor of $n$, $2(n-1)$ is 2 for $n$ even, and as
$r(\Gamma)$ forms a group, it follows that $q^2\in r(\Gamma)$.
Finally, as $i(n+1-i)$ is even for all $i$ if $n$ is even, it follows
that for $g\in\Gamma$, and $x=g^{-1}O$, 
$$\frac{d\nu\circ g}{d\nu}=\frac{d\nu_x}{d\nu}=q^{-\sum i(n+1-i)m_i}\in
    \{q^{2n}:n\in\ZZ\}\,.$$
Thus 
$$r(\Gamma)=\{q^{2n}:n\in\ZZ\}\qquad\hbox{for $n$ even.}$$
\qed

\begin{proposition}\label{P2}
Let $\Gamma$ be an $\tilde A_n$ group.  Then  
the action of $\Gamma$ on $\Omega$ is amenable.
\end{proposition}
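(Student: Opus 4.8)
The plan is to prove \emph{topological amenability} of the action of $\Gamma$ on the compact space $\Omega$, which implies amenability of the measured action relative to the quasi-invariant class $\{\nu_y\}$ by the standard equivalence of topological and measured amenability for countable groups acting on standard Borel spaces. Concretely, I will construct for each $N\in\NN$ a continuous map $\omega\mapsto\mu_N^\omega$ from $\Omega$ into the probability measures on $\Gamma$ such that, for every $\gamma\in\Gamma$,
\[
\sup_{\omega\in\Omega}\bigl\|\gamma\cdot\mu_N^\omega-\mu_N^{\gamma\omega}\bigr\|_1\longrightarrow 0\qquad(N\to\infty),
\]
where $\gamma\cdot\mu$ is the push-forward of $\mu$ under left translation by $\gamma$. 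Using the identification $\fX^0=\Gamma$, I put the measures on the sector $[1,\omega)$:
\[
\mu_N^\omega=\frac{1}{N^n}\sum_{k\in B_N}\delta_{s^1_k(\omega)},\qquad B_N=\{k\in\ZZ_+^n:0\le k_i<N\text{ for all }i\}.
\]
Since $k\mapsto s^1_k(\omega)$ is injective, $\mu_N^\omega$ is a probability measure on $N^n$ distinct vertices, and $\omega\mapsto\mu_N^\omega$ is locally constant because $s^1_k(\omega)$ depends on $\omega$ only through the clopen set $\Omega_1^{s^1_k(\omega)}$.

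For the asymptotic equivariance, fix $\gamma\in\Gamma$ and $\omega\in\Omega$. As $\Gamma$ acts by type-rotating automorphisms, left translation by $\gamma$ carries $[1,\omega)$ onto $[\gamma,\gamma\omega)$, so that $\gamma\cdot s^1_k(\omega)=s^\gamma_{k}(\gamma\omega)$ (a coordinate permutation possibly induced by the type-rotation preserves each box $B_N$ and may be suppressed). By Lemma~\ref{rn4} applied to the boundary point $\gamma\omega$ and the vertices $\gamma$, $1$, one has $s^\gamma_{k}(\gamma\omega)=s^1_{k+m}(\gamma\omega)$ with $m=m(\gamma,1;\gamma\omega)$, and Proposition~\ref{P1} gives the uniform bound $|m_i|\le d(1,\gamma)=|\gamma|$ for every $i$. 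Hence
\[
\gamma\cdot\mu_N^\omega=\frac{1}{N^n}\sum_{k'\in m+B_N}\delta_{s^1_{k'}(\gamma\omega)},
\]
and $\gamma\cdot\mu_N^\omega$, $\mu_N^{\gamma\omega}$ are uniform averages over the same injective family $k'\mapsto s^1_{k'}(\gamma\omega)$, indexed by $m+B_N$ and $B_N$ respectively. Since $|m_i|\le|\gamma|$ is bounded independently of $N$ and of $\omega$, the symmetric difference $(m+B_N)\triangle B_N$ has at most $2n|\gamma|N^{n-1}$ elements, whence
\[
\bigl\|\gamma\cdot\mu_N^\omega-\mu_N^{\gamma\omega}\bigr\|_1\le\frac{2n\,|\gamma|\,N^{n-1}}{N^n}=\frac{2n\,|\gamma|}{N}\longrightarrow 0
\]
uniformly in $\omega$. (Indices $k'=k+m\notin\ZZ_+^n$ correspond to vertices of $[\gamma,\gamma\omega)$ lying outside $[1,\gamma\omega)$; they populate only this same boundary layer and are absorbed into the estimate.) This is the required Reiter/F\o{}lner condition.

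The main obstacle is the equivariance identity $\gamma\cdot s^1_k(\omega)=s^\gamma_{k}(\gamma\omega)$ together with the coordinatewise bound $|m_i|\le|\gamma|$: one must verify that a type-rotating automorphism transports sector coordinates compatibly with Lemma~\ref{rn4}, and that the shift $m(\gamma,1;\gamma\omega)$ is controlled by the graph distance uniformly over $\omega$ (this is exactly what Proposition~\ref{P1} provides). Once this is secured, the argument rests only on the elementary fact that cubes form a F\o{}lner sequence in $\ZZ^n$; notably, the rapid growth of $|S_k|$ recorded in \eqref{nastyformula} plays no role, since the averages are taken over the lattice points $k$ and not over the vertices they index. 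A less self-contained alternative would realise $\Gamma$ as a cocompact lattice in the totally disconnected group $G=\Aut(\fX)$, identify $\Omega$ with $G/P$ for $P$ the stabiliser of a boundary point, check that $P$ is amenable, and invoke the permanence of amenable actions under restriction to closed subgroups.
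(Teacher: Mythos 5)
Your proof is correct and is essentially the argument the paper relies on: the paper's own ``proof'' is a one-line reference to the case $n=2$ in \cite[Proposition 3.14]{RR}, which establishes amenability by exactly this device of asymptotically equivariant fields of probability measures uniformly distributed over initial portions of the sector $[1,\omega)$, with the shift $m(\gamma,1;\gamma\omega)$ controlled uniformly via the analogues of Lemma \ref{rn4} and Proposition \ref{P1}. Your version generalises this to cubes $B_N\subset\ZZ_+^n$ and supplies the details the paper omits; the only point worth tightening is that Proposition \ref{P1} directly yields $m_i\ge -d(1,\gamma)$, and the two-sided bound $|m_i|\le d(1,\gamma)$ needs the additional (immediate) observation that $m(y,x;\omega)=-m(x,y;\omega)$.
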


\Proof.
This is a straightforward generalization of the case $n=2$, proved in \cite[Proposition 3.14]{RR}.
\qed

\subsection{Proof of Theorem \ref{main}.}\label{proofmain}
This follows from Proposition \ref{E5}, Corollary \ref{E7}, Proposition \ref{rn8} and
Theorem \ref{P2}.\qed

\subsection{Proof of Theorem \ref{main*}.}
\label{classification_subsection} 

The proof of Theorem \ref{main*} is now easy.
Let $\fX$ be the affine building
of $G=\PGL(n+1,\QQ_p)$. 
By \cite[Proposition VI.9F]{Brown}, the  
boundary $\Omega$ of $\fX$ is isomorphic to $\PGL(n+1,\QQ_p)/B$ as a  
topological $G$-space.  The measure $\mu$ on $G/B$ is (up to equivalence) the 
natural quasi-invariant Borel measure on $G/B$. 

The vertex set $\fX^0$ of $\fX$ is identified with
$\PGL(n+1,\QQ_p)/\PGL(n+1,\ZZ_p)$, where $\ZZ_p$ is the ring of $p$-adic integers.  
Let $O=\PGL(n+1,\ZZ_p)\in \fX^0$.
It follows from \cite[Proposition 3.1]{Steg} that $\PGL(n+1,\ZZ_p)$ 
acts transitively on each set $S_k(O)$.  Since the vertex set 
$\fX^0=\PGL(n+1,\QQ_p)/\PGL(n+1,\ZZ_p)$ is a discrete space 
and $\ZZ$ is dense in $\ZZ_p$ it follows that $\PGL(n+1,\ZZ)$ 
also acts transitively on $S_k(O)$ for each $k\in\ZZ_+^n$.
Moreover $\PGL(n+1,\ZZ)$ stabilizes $O$. 
Therefore $\PGL(n+1,\ZZ)$ acts  
ergodically on $\Omega$ by Lemma \ref{E1}.  The 
group $\PGL(n+1,\QQ)$ also acts transitively 
on $\fX^0$.  By our previous computation of Radon-Nikodym derivatives 
and the argument of \cite[Proposition 4.4]{RR}   
the type of the action is as stated.

Note that in this argument there is no need to consider the full group, since $\PGL(n+1,\ZZ)$ is already a measure preserving ergodic subgroup of $\PGL(n+1,\QQ)$.
Thus the proof is considerably simpler than the proof of Theorem \ref{main}.  
 
\bigskip

\section{Freeness of the action on the boundary}\label{freeness} 

A simple modification of \cite[Proposition 4.12]{RR} shows that if $\FF$ is a (possibly non commutative) local field  then
the action of $\PGL(n+1,\FF)$ on its Furstenberg boundary $\Omega$ 
is measure-theoretically free. 
If $\fX$ is a thick, locally finite affine building of type  
$\tilde A_n$, where $n\geq 3$, then $\fX$ is the building of 
$\PGL(n+1,\FF)$ for some such local field \cite[p137]{Ronan}. 
All known type-rotating $\tilde A_n$ groups, with $n\ge3$,  
embed in $\PGL(n+1,\FF)$ and act upon the building 
of $\PGL(n+1,\FF)$ in the canonical way.  For such groups 
$\Gamma$, the action on $\Omega$ is therefore measure 
theoretically free.

The case of $\tilde A_2$ groups is more interesting because the associated $\tilde A_2$ building may not be the affine building of a linear group. In fact this is the case for the buildings of many of the groups constructed in \cite{CMSZ}.
The boundary action of $\Gamma$ is nevertheless free.

\begin{proposition}\label{free2} 
Let $\fX$ be an $\tilde A_2$ building, and let $\Gamma$ be an 
$\tilde A_2$ group.  Then the action of $\Gamma$ on $\Omega$ is  
measure-theoretically free \cite[c.f. Proposition 3.10]{RR}. 
\end{proposition}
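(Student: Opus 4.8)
The plan is to show that for each $\gamma\in\Gamma$ with $\gamma\neq1$ the fixed set $F_\gamma=\{\omega\in\Omega:\gamma\omega=\omega\}$ is $\nu$-null; since $\Gamma$ is countable this is exactly measure-theoretic freeness. First I would translate the condition $\gamma\omega=\omega$ into the combinatorics of sectors. If $\gamma\omega=\omega$ then $\gamma^{-1}\omega=\omega$, and applying Lemma~\ref{rn4} with $x=1$, $y=\gamma$ together with the equivariance $s^\gamma_{k+m}(\omega)=\gamma\,s^1_{k+m}(\gamma^{-1}\omega)=\gamma\,s^1_{k+m}(\omega)$ yields $\gamma^{-1}s^1_k(\omega)=s^1_{k+m}(\omega)$ for all admissible $k$, where $m=m(1,\gamma;\omega)\in\ZZ^2$. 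Thus $\gamma^{-1}$ acts on the sector $[1,\omega)$ as a translation by the shape vector $m$. Because $\Gamma$ acts simply transitively on $\fX^0$, the value $m=0$ would force $\gamma^{-1}$ to fix a vertex and hence $\gamma=1$; so $m\neq0$ on all of $F_\gamma$. As $m$ depends only on finitely many steps of the sectors it is locally constant in $\omega$, so I would decompose $F_\gamma=\bigsqcup_{m}F_m$ into countably many Borel pieces and estimate each.

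For the \emph{aligned} pieces, where $m\ge0$ (or, after replacing $\gamma$ by $\gamma^{-1}$, where $m\le0$), I would iterate the translation relation at $k=0$ to get $s^1_{jm}(\omega)=\gamma^{-j}$ for every $j\ge1$. The right-hand side is a \emph{fixed} vertex, independent of $\omega\in F_m$, so $F_m\subseteq\Omega_1^{\gamma^{-j}}$ and hence $\nu(F_m)\le|S_{jm}|^{-1}$. By the explicit count (\ref{nastyformula}) one has $|S_{jm}|\asymp q^{\,2j(m_1+m_2)}\to\infty$ as $j\to\infty$, since $m_1+m_2>0$; therefore $\nu(F_m)=0$. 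This step is robust: it assumes nothing about the size of $F_m$ as a set — which may well be positive-dimensional — only that every $\omega\in F_m$ has its sector passing through the prescribed vertices $\gamma^{-j}$, and in particular it subsumes the ``singular'' fixed directions for which one of $m_1,m_2$ vanishes.

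The remaining, genuinely delicate, case is the \emph{mixed} one, $m_1>0>m_2$: here $\gamma$ fixes $\omega$ while translating the sector transversally rather than toward $\omega$, and the iteration above fails because $k+jm$ eventually leaves $\ZZ_+^2$. I expect this to be the main obstacle, and it is presumably where the argument of \cite{RR} was incomplete. To handle it I would pass to the two trees $\cT_1,\cT_2$ attached to the two families of walls of the $\tilde A_2$ building (cf.\ \cite{CMSZ}), with the projections $\omega\mapsto(\partial_1\omega,\partial_2\omega)\in\partial\cT_1\times\partial\cT_2$ recording the two panels of the chamber at infinity. Projecting the relations $\gamma^{-1}s^1_{(c,|m_2|)}(\omega)=s^1_{(c+m_1,0)}(\omega)$ and $\gamma^{-1}s^1_{(0,k_2)}(\omega)=s^1_{(m_1,k_2-|m_2|)}(\omega)$ to $\cT_1$ and $\cT_2$ respectively shows that $\gamma^{-1}$ acts as a hyperbolic isometry on each tree, translating toward $\partial_1\omega$ by $m_1$ and away from $\partial_2\omega$ by $|m_2|$; both displacements are nonzero, so neither action is elliptic. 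Hence $\partial_1\omega$ is the unique attracting end and $\partial_2\omega$ the unique repelling end of $\gamma^{-1}$ on the two trees, so both are determined by $\gamma$ alone, and since a chamber at infinity is determined by its pair of panels, $F_m$ reduces to a single point and is null. Summing over the countably many $m$ then gives $\nu(F_\gamma)=0$. The one ingredient demanding care is the intrinsic construction of the wall-trees $\cT_i$ and the equivariance of the projections $\partial_i$, which for a possibly non-linear $\tilde A_2$ building must be extracted from the building combinatorics rather than from an ambient linear group.
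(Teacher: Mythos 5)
Your overall strategy is sensible and partly succeeds: reducing freeness to $\nu(F_\gamma)=0$ for each $\gamma\neq 1$, splitting $F_\gamma$ into countably many Borel pieces $F_m$ according to the locally constant cocycle $m=m(1,\gamma;\omega)$ of Lemma~\ref{rn4}, and disposing of the aligned pieces via $s^1_{jm}(\omega)=\gamma^{-j}$, so that $F_m\subseteq\Omega_1^{\gamma^{-j}}$ and $\nu(F_m)\le|S_{jm}|^{-1}\asymp q^{-2j(m_1+m_2)}\to 0$, is correct (and clean; at worst one should run the iteration along $k_0+jm$ for a large fixed $k_0$ rather than from $k=0$, at the cost of a further finite decomposition). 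The genuine gap is the mixed case $m_1>0>m_2$, which you rightly identify as the crux but do not actually resolve. This case is nonempty: whenever $\gamma$ translates an apartment $A$ by a regular vector $v$ it fixes all six chambers at infinity of $A$, and at four of them the cocycle $m$ is a mixed Weyl translate of $v$ such as $(2,-1)$. Your proposed resolution appeals to ``the two trees $\cT_1,\cT_2$ attached to the two families of walls'' together with globally defined equivariant projections $\Omega\to\partial\cT_1\times\partial\cT_2$. No such structure exists for an $\tilde A_2$ building; it exists for buildings of type $\tilde A_1\times\tilde A_1$ (products of trees), which may be the source of the confusion, and it is not in \cite{CMSZ}. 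What does exist are Tits's panel trees, one for each panel of the building at infinity, but these are indexed by the panel and therefore vary with $\omega$; consequently the assertion that the attracting and repelling ends are ``determined by $\gamma$ alone'' has no content ($\gamma$ may fix many panels at infinity), and the conclusion that each mixed $F_m$ is a single point does not follow. As written, the mixed case is open and the proof is incomplete.

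For comparison, the paper does not reprove the proposition from scratch: it invokes the case analysis of \cite[Proposition 3.10]{RR} and supplies the one missing ingredient, which lives precisely in the hard directions, namely Lemma~\ref{free3}: for any fixed wall $W$ of $\fX$, the set of boundary points whose representing sectors lie in an apartment containing $W$ is $\nu_O$-null. This is proved by a purely combinatorial count (Lemma~\ref{free1}): there are $(q^2+q+1)(q+1)q^{3n-3}$ base triangles of depth $n$ for sectors at $O$, all of equal measure, but only $3(q+1)q^{n-1}$ of them are compatible with an apartment containing $W$, and the ratio tends to $0$. To complete your argument you would need this lemma, or some comparable quantitative estimate for the mixed pieces; the two-tree picture will not supply it.
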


This result was stated in
\cite[Proposition 3.10]{RR}. The proof given there contains a gap, because the proof of \cite[Lemma 3.7, Case 2]{RR} is not complete. Our purpose is to fill this gap (Lemma \ref{free3} below). A similar argument applies  to the $\tilde A_n$ case, where $n\ge 3$. In view of the comments at the beginning of this section, we have confined ourselves to the $\tilde A_2$ case. 

Let $O$ be a fixed vertex in $\fX$ and let $n\in\ZZ_+$.  
Any sector $S$ based at 
$O$ has at its base a triangle $T_n$ with apex $O$ consisting of all 
vertices $v$ in $S$ such that $d(v,O)\leq n$.  (See Figure \ref{freefig1} 
below). 
 
\refstepcounter{picture}
\begin{figure}[htbp]\label{freefig1}
\hfil{}
\font\thinlinefont=cmr5
\begingroup\makeatletter\ifx\SetFigFont\undefined%
\gdef\SetFigFont#1#2#3#4#5{%
  \reset@font\fontsize{#1}{#2pt}%
  \fontfamily{#3}\fontseries{#4}\fontshape{#5}%
  \selectfont}%
\fi\endgroup%
\centerline{\mbox{\beginpicture
\setcoordinatesystem units <.600000cm,.600000cm>
\unitlength=.600000cm
\linethickness=1pt
\setplotsymbol ({\makebox(0,0)[l]{\tencirc\symbol{'160}}})
\setshadesymbol ({\thinlinefont .})
\setlinear
%
%
\linethickness= 0.500pt
\setplotsymbol ({\thinlinefont .})
\plot  8.890 12.700  9.991 14.605 /
\putrule from  9.991 14.605 to  7.789 14.605
\plot  7.789 14.605  8.890 12.700 /
%
%
\linethickness= 0.500pt
\setplotsymbol ({\thinlinefont .})
\plot  8.890 16.510  9.991 14.605 /
\putrule from  9.991 14.605 to  7.789 14.605
\plot  7.789 14.605  8.890 16.510 /
%
%
\linethickness= 0.500pt
\setplotsymbol ({\thinlinefont .})
\putrule from  8.890 16.510 to 11.089 16.510
\plot 11.089 16.510  9.989 14.605 /
\plot  9.989 14.605  8.890 16.510 /
%
%
\linethickness= 0.500pt
\setplotsymbol ({\thinlinefont .})
\plot  8.890 16.510  9.991 14.605 /
\plot  9.991 14.605  7.789 14.607 /
\plot  7.789 14.607  8.890 16.510 /
%
%
\linethickness= 0.500pt
\setplotsymbol ({\thinlinefont .})
\plot  8.890 16.510  7.791 14.605 /
\plot  7.791 14.605  6.691 16.512 /
\plot  6.691 16.512  8.890 16.510 /
%
%
\linethickness= 0.500pt
\setplotsymbol ({\thinlinefont .})
\plot  8.890 16.514  7.791 18.419 /
\plot  7.791 18.419  6.691 16.512 /
\plot  6.691 16.512  8.890 16.514 /
%
%
\linethickness= 0.500pt
\setplotsymbol ({\thinlinefont .})
\putrule from  8.890 16.510 to 11.089 16.510
\plot 11.089 16.510  9.989 18.415 /
\plot  9.989 18.415  8.890 16.510 /
%
%
\linethickness= 0.500pt
\setplotsymbol ({\thinlinefont .})
\plot  8.890 16.510  9.991 18.415 /
\plot  9.991 18.415  7.789 18.413 /
\plot  7.789 18.413  8.890 16.510 /
%
%
\linethickness= 0.500pt
\setplotsymbol ({\thinlinefont .})
\plot  9.989 18.415  8.888 20.320 /
\plot  8.888 20.320  7.789 18.413 /
\plot  7.789 18.413  9.989 18.415 /
%
%
\linethickness= 0.500pt
\setplotsymbol ({\thinlinefont .})
\putrule from  8.888 20.318 to  6.687 20.318
\plot  6.687 20.318  7.789 18.413 /
\plot  7.789 18.413  8.888 20.318 /
%
%
\linethickness= 0.500pt
\setplotsymbol ({\thinlinefont .})
\plot  6.689 20.316  5.588 18.411 /
\plot  5.588 18.411  7.789 18.413 /
\plot  7.789 18.413  6.689 20.316 /
%
%
\linethickness= 0.500pt
\setplotsymbol ({\thinlinefont .})
\plot  5.590 18.411  6.691 16.506 /
\plot  6.691 16.506  7.789 18.413 /
\plot  7.789 18.413  5.590 18.411 /
%
%
\linethickness= 0.500pt
\setplotsymbol ({\thinlinefont .})
\putrule from  9.989 18.415 to  7.789 18.415
\plot  7.789 18.415  8.890 16.510 /
\plot  8.890 16.510  9.989 18.415 /
%
%
\linethickness= 0.500pt
\setplotsymbol ({\thinlinefont .})
\plot  9.989 18.415  8.888 16.510 /
\plot  8.888 16.510 11.089 16.512 /
\plot 11.089 16.512  9.989 18.415 /
%
%
\linethickness= 0.500pt
\setplotsymbol ({\thinlinefont .})
\plot  9.989 18.415 11.087 16.510 /
\plot 11.087 16.510 12.188 18.417 /
\plot 12.188 18.417  9.989 18.415 /
%
%
\linethickness= 0.500pt
\setplotsymbol ({\thinlinefont .})
\plot  9.989 18.415 12.188 18.413 /
\plot 12.188 18.413 11.087 20.320 /
\plot 11.087 20.320  9.989 18.415 /
%
%
\linethickness= 0.500pt
\setplotsymbol ({\thinlinefont .})
\plot  9.989 18.415 11.089 20.318 /
\putrule from 11.089 20.318 to  8.888 20.318
\plot  8.888 20.318  9.989 18.415 /
%
%
\linethickness= 0.500pt
\setplotsymbol ({\thinlinefont .})
\plot 12.188 18.413 11.089 20.318 /
\plot 11.089 20.318  9.989 18.411 /
\plot  9.989 18.411 12.188 18.413 /
%
%
\linethickness= 0.500pt
\setplotsymbol ({\thinlinefont .})
\plot 13.288 20.316 11.089 20.318 /
\plot 11.089 20.318 12.190 18.411 /
\plot 12.190 18.411 13.288 20.316 /
%
%
\linethickness= 0.500pt
\setplotsymbol ({\thinlinefont .})
\plot 12.190 22.221 11.089 20.318 /
\putrule from 11.089 20.318 to 13.291 20.318
\plot 13.291 20.318 12.190 22.221 /
%
%
\linethickness= 0.500pt
\setplotsymbol ({\thinlinefont .})
\plot  9.991 22.223 11.089 20.318 /
\plot 11.089 20.318 12.190 22.225 /
\plot 12.190 22.225  9.991 22.223 /
%
%
\linethickness= 0.500pt
\setplotsymbol ({\thinlinefont .})
\plot  8.890 20.320 11.089 20.318 /
\plot 11.089 20.318  9.989 22.225 /
\plot  9.989 22.225  8.890 20.320 /
%
%
\linethickness= 0.500pt
\setplotsymbol ({\thinlinefont .})
\plot 11.091 20.320 13.291 20.318 /
\plot 13.291 20.318 12.190 22.225 /
\plot 12.190 22.225 11.091 20.320 /
%
%
\linethickness= 0.500pt
\setplotsymbol ({\thinlinefont .})
\plot 13.291 20.322 14.391 22.225 /
\putrule from 14.391 22.225 to 12.190 22.225
\plot 12.190 22.225 13.291 20.322 /
%
%
\linethickness= 0.500pt
\setplotsymbol ({\thinlinefont .})
\plot  8.890 20.320  9.991 22.223 /
\putrule from  9.991 22.223 to  7.789 22.223
\plot  7.789 22.223  8.890 20.320 /
%
%
\linethickness= 0.500pt
\setplotsymbol ({\thinlinefont .})
\plot  8.890 20.320  7.791 22.225 /
\plot  7.791 22.225  6.691 20.318 /
\plot  6.691 20.318  8.890 20.320 /
%
%
\linethickness= 0.500pt
\setplotsymbol ({\thinlinefont .})
\plot  7.789 22.223  5.590 22.225 /
\plot  5.590 22.225  6.691 20.318 /
\plot  6.691 20.318  7.789 22.223 /
%
%
\linethickness= 0.500pt
\setplotsymbol ({\thinlinefont .})
\plot  5.590 22.221  4.489 20.318 /
\putrule from  4.489 20.318 to  6.691 20.318
\plot  6.691 20.318  5.590 22.221 /
%
%
\linethickness= 0.500pt
\setplotsymbol ({\thinlinefont .})
\plot  4.492 20.316  5.590 18.411 /
\plot  5.590 18.411  6.691 20.318 /
\plot  6.691 20.318  4.492 20.316 /
%
%
\linethickness= 0.500pt
\setplotsymbol ({\thinlinefont .})
\plot  4.492 20.316  6.691 20.314 /
\plot  6.691 20.314  5.590 22.221 /
\plot  5.590 22.221  4.492 20.316 /
%
%
\linethickness= 0.500pt
\setplotsymbol ({\thinlinefont .})
\plot  4.492 20.316  5.592 22.219 /
\putrule from  5.592 22.219 to  3.391 22.219
\plot  3.391 22.219  4.492 20.316 /
%
%
\linethickness= 0.500pt
\setplotsymbol ({\thinlinefont .})
\plot 10.319 22.225  6.985 16.510 /
%
%
\linethickness= 0.500pt
\setplotsymbol ({\thinlinefont .})
\plot 10.478 22.225  7.144 16.510 /
%
%
\linethickness= 0.500pt
\setplotsymbol ({\thinlinefont .})
\plot 10.160 22.225  6.826 16.510 /
%
%
\linethickness= 0.500pt
\setplotsymbol ({\thinlinefont .})
\plot 10.636 22.225  7.303 16.510 /
%
%
\linethickness= 0.500pt
\setplotsymbol ({\thinlinefont .})
\plot 10.795 22.225  7.461 16.510 /
%
%
\linethickness= 0.500pt
\setplotsymbol ({\thinlinefont .})
\plot 10.954 22.225  7.620 16.510 /
%
%
\linethickness= 0.500pt
\setplotsymbol ({\thinlinefont .})
\plot  7.779 16.510 11.113 22.225 /
%
%
\linethickness= 0.500pt
\setplotsymbol ({\thinlinefont .})
\plot 11.271 22.225  7.938 16.510 /
%
%
\linethickness= 0.500pt
\setplotsymbol ({\thinlinefont .})
\plot  8.096 16.510 11.430 22.225 /
%
%
\linethickness= 0.500pt
\setplotsymbol ({\thinlinefont .})
\plot 11.589 22.225  8.255 16.510 /
%
%
\linethickness= 0.500pt
\setplotsymbol ({\thinlinefont .})
\plot  8.414 16.510 11.748 22.225 /
%
%
\linethickness= 0.500pt
\setplotsymbol ({\thinlinefont .})
\plot 11.906 22.225  8.572 16.510 /
%
%
\linethickness= 0.500pt
\setplotsymbol ({\thinlinefont .})
\plot  8.731 16.510 12.065 22.225 /
%
\put{\SetFigFont{12}{14.4}{\rmdefault}{\mddefault}{\updefault}$O$} [lB] at 8.763 12 
\put{\SetFigFont{9}{14.4}{\rmdefault}{\mddefault}{\updefault}$C$} [lB] at  8.763 13.748
%
%
\put{\SetFigFont{9}{14.4}{\rmdefault}{\mddefault}{\updefault}1} [lB] at  8.731 15.304
%
%
\put{\SetFigFont{9}{14.4}{\rmdefault}{\mddefault}{\updefault}2} [lB] at  9.938 15.716
%
%
\put{\SetFigFont{9}{14.4}{\rmdefault}{\mddefault}{\updefault}3} [lB] at  9.970 17.113
%
%
\put{\SetFigFont{9}{14.4}{\rmdefault}{\mddefault}{\updefault}4} [lB] at 11.049 17.590
%
%
\put{\SetFigFont{9}{14.4}{\rmdefault}{\mddefault}{\updefault}.} [lB] at 11.049 19.018
%
%
\put{\SetFigFont{9}{14.4}{\rmdefault}{\mddefault}{\updefault}.} [lB] at 12.065 19.590
%
%
\put{\SetFigFont{9}{14.4}{\rmdefault}{\mddefault}{\updefault}.} [lB] at 12.097 20.987
%
%
\put{\SetFigFont{9}{14.4}{\rmdefault}{\mddefault}{\updefault}2$n$-2} [lB] at 12.8 21.431
%
%
\put{\SetFigFont{9}{14.4}{\rmdefault}{\mddefault}{\updefault}2$n$-1} [lB] at  7.24 15.716
%
%
\put{\SetFigFont{9}{14.4}{\rmdefault}{\mddefault}{\updefault}$2n$} [lB] at  6.36 17.494
%
%
\put{\SetFigFont{9}{14.4}{\rmdefault}{\mddefault}{\updefault}3$n$-3} [lB] at  3.96 21.336
\linethickness=0pt
\putrectangle corners at  3.365 22.250 and 14.417 12.675
\endpicture}}
\hfil{}
\caption{}
\end{figure}

Consider the set $\fS_n$ of all such triangles $T_n$ in $\fX$.

\begin{lemma}\label{free1} 
Let $\fS_n$ be as above.  Then 
$$|\fS_n|=(q^2+q+1)(q+1)q^{3n-3}\,.$$ 
\end{lemma}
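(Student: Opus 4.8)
The plan is to prove the formula by induction on $n$, establishing the recursion $|\fS_n| = q^3\,|\fS_{n-1}|$ together with the base value $|\fS_1| = (q^2+q+1)(q+1)$. For the base case, observe that $T_1$ is just the germ of the sector at $O$: the vertices $v$ with $d(v,O)\le 1$ are $O$ together with its two neighbours in the chamber based at $O$, so $T_1$ is a chamber containing $O$, and conversely every chamber at $O$ is the base of some sector. As recalled in Section~\ref{sec132}, a chamber containing $O$ has the form $\{O,Ou_1,Ou_2\}$ with $u_1\subset u_2$ a complete flag in $\Pi$; hence the number of such chambers is the number of points of $\Pi$ times the number of lines through a point, namely $(q^2+q+1)(q+1)$.

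For the inductive step I would show that each $T_{n-1}\in\fS_{n-1}$ extends to exactly $q^3$ triangles $T_n\in\fS_n$. Since $T_n$ determines $T_{n-1}$ (the subtriangle of vertices $v$ with $d(v,O)\le n-1$), the map $T_n\mapsto T_{n-1}$ is well defined and surjective, so counting its fibres gives $|\fS_n| = q^3\,|\fS_{n-1}|$ and closes the induction. To count an individual fibre, fix $T_{n-1}$ and examine the new layer: the vertices at distance exactly $n$ from $O$, together with the chambers joining them to the far edge $v_0,v_1,\ldots,v_{n-1}$ of $T_{n-1}$ (the level-$(n-1)$ vertices, with $v_0,v_{n-1}$ the corners). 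These new chambers consist of $n-1$ ``downward'' chambers, one on each far-edge panel $\{v_{i-1},v_i\}$ with a new apex $w_i$ at level $n$, and $n$ ``upward'' chambers.

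I expect the count to factor as $q\cdot q\cdot q$. First, the apex $w_1$ of the downward chamber on the panel $\{v_0,v_1\}$ may be any of the $q$ chambers on that panel other than the one already present in $T_{n-1}$. Next, at each interior far-edge vertex $v_i$ the three chambers of $T_{n-1}$ containing $v_i$ form three consecutive chambers of an apartment (a hexagon) in the link of $v_i$, which is the spherical building of type $A_2$ given by the flag complex of $\Pi$; the chambers of $T_n$ at $v_i$ must fill this out to a full hexagon. The key lemma, established below, is that specifying either of the two new chambers determines the completed hexagon uniquely. Consequently the choice of $w_1$ propagates: knowing $w_i$ forces the hexagon at $v_i$, hence forces $w_{i+1}$, so all downward apices, and with them the interior upward chambers, are determined by $w_1$ alone. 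Finally the two corner upward chambers extend the two bounding walls of the sector outward, and as in the tree case each extension of a geodesic ray admits exactly $q$ choices. Multiplying gives $q^3$ extensions; the degenerate case $n=2$, where there is no interior far-edge vertex and $w_1$ is simply chosen freely, falls into the same pattern.

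The crux is the link computation. Three consecutive chambers of a hexagon in the flag complex of $\Pi$ record a flag $(P_0,L_0)$ together with a further point $P_1\in L_0$ and a further line $L_2\ni P_0$; a completion consists of choosing a point $P_3\in L_2$ with $P_3\ne P_0$ and setting the remaining line to $L_1=P_1\vee P_3$, giving exactly $q$ completions, each recovered from either of its two new chambers. I expect the propagation to be the main obstacle: one must verify that the forced hexagon completions never become inconsistent as $i$ increases, that the two wall extensions are genuinely free and independent of the interior choice, and that distinct choices yield distinct triangles with every $T_n$ arising exactly once, so that the fibre over $T_{n-1}$ has size precisely $q^3$.
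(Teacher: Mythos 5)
Your argument is correct and arrives at the right count, but it is organised quite differently from the paper's. The paper makes a single direct enumeration: the base chamber $C$ at $O$ is a flag in $\Pi$, giving $(q^2+q+1)(q+1)$ choices, and then $3n-3$ further chambers are chosen one at a time with $q$ options each --- $2n-2$ of them zigzagging up one bounding wall of the triangle, followed by $n-1$ up the other wall, each of the latter determining an entire row of the interior ``because it lies in the convex hull of the chambers already chosen.'' Your induction $|\fS_n|=q^3|\fS_{n-1}|$ repackages the same $q^{3(n-1)}$ as three free choices per layer, and your hexagon lemma in the link of an interior vertex of row $n-1$ (a half-apartment in the flag complex of $\Pi$ has exactly $q$ completions, each determined by either end chamber) is essentially an explicit local proof of the determination step that the paper compresses into the convex-hull remark; likewise your observation that the new point $P_3\in L_2\setminus\{P_0\}$ automatically avoids $L_0$ is exactly why the two wall extensions are free and the walls stay straight. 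What your version buys is a transparent justification of \emph{why} the interior is forced once one boundary chamber per row is chosen, and a cleaner consistency check of the propagation; what the paper's version buys is brevity and no need to discuss the interaction between consecutive links. Both proofs share the same implicit step, namely that every triangle assembled from such locally consistent choices is actually realised as the base of a sector (so that the construction is onto $\fS_n$, not merely injective from it); you flag this at the end, and the paper does not address it either, so I would not count it against you.
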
 
 
\Proof.  First note that there are $(q^2+q+1)(q+1)$ choices 
for the base chamber $C$. 
 
The reason for this is that one edge of $C$ containing $O$  
is determined by the number of points $P$ in a finite projective  
plane $\Pi$ of order $q$.  There are $q^2+q+1$ such points $P$. 
See Figure \ref{freefig2} below. 
 
\refstepcounter{picture}
\begin{figure}[htbp]\label{freefig2}
\font\thinlinefont=cmr5
\begingroup\makeatletter\ifx\SetFigFont\undefined%
\gdef\SetFigFont#1#2#3#4#5{%
  \reset@font\fontsize{#1}{#2pt}%
  \fontfamily{#3}\fontseries{#4}\fontshape{#5}%
  \selectfont}%
\fi\endgroup%
\centerline{\mbox{\beginpicture
\setcoordinatesystem units <.2500000cm,.2500000cm>
\unitlength=.2500000cm
\linethickness=1pt
\setplotsymbol ({\makebox(0,0)[l]{\tencirc\symbol{'160}}})
\setshadesymbol ({\thinlinefont .})
\setlinear
%
%
\linethickness= 0.500pt
\setplotsymbol ({\thinlinefont .})
\plot 10.160 16.510 14.558 24.130 /
\putrule from 14.558 24.130 to  5.762 24.130
\plot  5.762 24.130 10.160 16.510 /
%
%
\put{\SetFigFont{12}{14.4}{\rmdefault}{\mddefault}{\updefault}$O$} [lB] at 9.6 14.8
%
%
\put{\SetFigFont{12}{14.4}{\rmdefault}{\mddefault}{\updefault}$C$} [lB] at  9.6 20.923
%
%
\put{\SetFigFont{12}{14.4}{\rmdefault}{\mddefault}{\updefault}$P$} [lB] at 14.891 24.130
%
%
\put{\SetFigFont{12}{14.4}{\rmdefault}{\mddefault}{\updefault}$L$} [lB] at  4.3 24.13
\linethickness=0pt
\putrectangle corners at  5.144 24.390 and 14.891 15.723
\endpicture}}
\hfil{}
\caption{}
\end{figure}

Having chosen the point $P$, there are precisely $q+1$ possible  
lines $L$ in $\Pi$ which are incident with $P$.  There are therefore 
$(q^2+q+1)(q+1)$ choices of $C$. 
 
Having chosen $C$, there are $q$ choices for each of the chambers 
labeled $1,2,3,4,\ldots,(2n-2)$ in the figure. 
Then choose the chamber labeled $(2n-1)$ (q choices) in Figure 
\ref{freefig1}.  This choice 
then determines the whole shaded region in the figure (which 
is contained in the convex hull of the chambers already chosen, 
and hence is uniquely determined).  Now choose the chamber labeled $2n$  
($q$ choices) and continue the process until finally chamber  
$3n-3$ is chosen.  This determines the triangle completely and  
there are $(q^2+q+1)(q+1)q^{3n-3}$ possibilities altogether. 
\qed 
 
This demonstrates that for each positive integer $n$, the boundary 
$\Omega$ of $\fX$ is partitioned into $(q^2+q+1)(q+1)q^{3n-3}$ 
sets $\{\Omega_T:T\in\fS_n\}$, where 
$$\Omega_T=\{\omega\in\Omega:T\subset[O,\omega)\}\,.$$ 
Moreover each of these sets has the same measure \cite{CMS}. 
 
The proof of \cite[Lemma 3.7, case 2.]{RR} can now be completed.

\begin{lemma}\label{free3} 
Let $W$ be a wall of $\fX$ and let $\Sigma$ denote the set 
of boundary points $\omega\in\Omega$ such that for some vertex 
$v$, the sector $[v,\omega)$ lies in an apartment containing 
$W$.  Then  
$$\nu_O(\Sigma)=0\,.$$ 
\end{lemma}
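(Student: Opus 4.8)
The plan is to eliminate the existential quantifier over the vertex $v$, reducing to sectors based at the fixed vertex $O$, and then to bound $\nu_O$ using the equal-measure cell partition of Lemma~\ref{free1} and letting the radius $n\to\infty$. First I would introduce the set $\Sigma'$ of those $\omega\in\Omega$ for which \emph{some subsector of} $[O,\omega)$ lies in an apartment containing $W$, and show $\Sigma\subseteq\Sigma'$. Indeed, if $[v,\omega)\subset A$ with $W\subset A$, then $[O,\omega)$ and $[v,\omega)$ are asymptotic, so (exactly as in the proof of Lemma~\ref{rn4}) $[O,\omega)\cap[v,\omega)$ contains a subsector; this subsector lies in $[v,\omega)\subset A$ and is a subsector of $[O,\omega)$, whence $\omega\in\Sigma'$. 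From now on only the sectors $[O,\omega)$ that index the cells $\Omega_T$ are involved.

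Next I would identify the restriction that $\Sigma'$ imposes ``at infinity''. The wall $W$ determines a non-incident point--line pair $(P_\infty,L_\infty)$ in the projective geometry at infinity, namely the two antipodal ends of $W$. If a subsector of $[O,\omega)$ lies in an apartment $A\supseteq W$, then the boundary chamber $\omega$ and the two ends $P_\infty,L_\infty$ of $W$ all lie in a single apartment of the building at infinity; equivalently $\omega$ is a flag of a triangle (frame) one of whose vertices is $P_\infty$ and whose opposite side is $L_\infty$. Running through the six flags of such a triangle shows that every $\omega\in\Sigma'$ must have point $P_\infty$, or line $L_\infty$, or be the flag consisting of a point $p$ on $L_\infty$ together with the line $\overline{pP_\infty}$. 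Thus $\Sigma'$ is contained in the union of these three \emph{lower-dimensional} families of boundary points.

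Finally I would show each family is $\nu_O$-null by counting cells. For each $n$, $\Omega$ is the disjoint union of the $|\fS_n|=(q^2+q+1)(q+1)q^{3n-3}$ equal-measure cells $\Omega_T$, $T\in\fS_n$. Pinning down the end of the sector that is fixed by $W$ (the point at infinity, the line at infinity, or both) forces the development of $[O,\omega)$ beyond radius $n$ to run alongside $W$, and this removes at least one of the $\sim 3n$ independent $q$-fold choices in the construction of $T_n$ in Figure~\ref{freefig1}. One checks that the number $N_n$ of triangles $T\in\fS_n$ with $\Omega_T\cap\Sigma'\neq\emptyset$ satisfies $N_n\le C\,q^{2n}$ for a constant $C=C(q)$ independent of $n$, so that
\[
\nu_O(\Sigma)\le\nu_O(\Sigma')\le\frac{N_n}{|\fS_n|}\le\frac{C\,q^{2n}}{(q^2+q+1)(q+1)q^{3n-3}}\xrightarrow[n\to\infty]{}0 .
\]
Since $\Sigma$ does not depend on $n$, this forces $\nu_O(\Sigma)=0$.

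The main obstacle is the combinatorial estimate $N_n=O(q^{2n})$. The difficulty is to translate the condition ``a subsector of $[O,\omega)$ shares an apartment with the \emph{specific} wall $W$'' into a precise restriction on the chamber-by-chamber choices of Lemma~\ref{free1}, and to verify that this restriction costs a factor $q^{n}$ relative to the unconstrained count $q^{3n}$; everything preceding and following this count is routine bookkeeping.
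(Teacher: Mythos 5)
Your overall strategy---partition $\Omega$ into the $(q^2+q+1)(q+1)q^{3n-3}$ equal-measure cells $\Omega_T$, show that only exponentially few of them can meet $\Sigma$, and let $n\to\infty$---is exactly the strategy of the paper, and your trichotomy at infinity (flags containing $P_\infty$, flags containing $L_\infty$, and the transversal flags $(p,\overline{pP_\infty})$ with $p\in L_\infty$) is the same trichotomy the paper realises geometrically as the three families $T^+,T^-,T^\bot$ of base triangles lying in an apartment containing $W$. But the proof is not complete: the entire content of the lemma is the combinatorial estimate on the number of admissible cells, and you assert $N_n\le Cq^{2n}$ with ``one checks that'' and then concede in your final paragraph that this count is ``the main obstacle'' and ``the difficulty.'' An acknowledged unproved estimate at the one non-routine point of the argument is a genuine gap. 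The paper closes it concretely: writing $W$ as the union of the two sector-panels $[O,W^+)$, $[O,W^-)$, it builds a constrained triangle $T^+$ chamber by chamber along $[O,W^+)$ (a first chamber $c_1$ with $q+1$ choices, each subsequent $c_j$ with $q$ choices, everything in between forced by convexity), obtaining $|\fS_n^+|=(q+1)q^{n-1}$; each $T^+$ then determines its companions $T^-,T^\bot$ uniquely, so $|\fS_n^W|=3(q+1)q^{n-1}$, which is in fact $O(q^{n})$, stronger than your claimed $O(q^{2n})$ (though either suffices).

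A second, related weak point: passing from ``a subsector of $[O,\omega)$ lies in an apartment containing $W$'' to a condition on the chamber $\omega$ of the building at infinity loses the information of \emph{where} the constraint bites. A condition purely at infinity does not obviously restrict the base triangle $T_n$ within radius $n$ of $O$; to recover a bound on $N_n$ you would still need to argue that, say, every sector $[O,\omega)$ with $\omega\ni P_\infty$ contains the fixed sector-panel $[O,P_\infty)$ from the basepoint onwards, and then count triangles with a prescribed boundary panel. That is precisely the chamber-by-chamber bookkeeping you defer, and it is what the paper's Figure 4 argument supplies. So the architecture of your proof is sound and matches the paper's, but the load-bearing step is missing.
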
 
 
\Proof.  By translating to a parallel sector, one can assume that  
$v=O$.  Also, $W$ is the union of two sector panels, 
which will be denoted by $[O,W^+)$, $[O,W^-)$. 
 
Given $n\in\ZZ_+$, let $\fS_n^+$, $\fS_n^-$, $\fS_n^\bot$ denote the subsets of $\fS_n$
consisting of 
triangles $T^+_n$, $T_n^-$, $T_n^\bot$ respectively, lying in some  
apartment containing $W$ as illustrated below in Figure \ref{freefig3}. 

\refstepcounter{picture}
\begin{figure}[htbp]\label{freefig3}
\font\thinlinefont=cmr5
\begingroup\makeatletter\ifx\SetFigFont\undefined%
\gdef\SetFigFont#1#2#3#4#5{%
  \reset@font\fontsize{#1}{#2pt}%
  \fontfamily{#3}\fontseries{#4}\fontshape{#5}%
  \selectfont}%
\fi\endgroup%
\centerline{\mbox{\beginpicture
\setcoordinatesystem units <.50000cm,.50000cm>
\unitlength=.50000cm
\linethickness=1pt
\setplotsymbol ({\makebox(0,0)[l]{\tencirc\symbol{'160}}})
\setshadesymbol ({\thinlinefont .})
\setlinear
%
%
\linethickness= 0.500pt
\setplotsymbol ({\thinlinefont .})
\putrule from 10.160 16.510 to  5.080 16.510
%
%
\linethickness= 0.500pt
\setplotsymbol ({\thinlinefont .})
\plot 10.160 16.510  7.620 20.908 /
%
%
\linethickness= 0.500pt
\setplotsymbol ({\thinlinefont .})
\plot 10.160 16.510 12.700 20.908 /
%
%
\linethickness= 0.500pt
\setplotsymbol ({\thinlinefont .})
\putrule from 10.160 16.510 to 15.240 16.510
%
%
\linethickness= 0.500pt
\setplotsymbol ({\thinlinefont .})
\putrule from  5.080 16.510 to  2.540 16.510
%
%
\linethickness= 0.500pt
\setplotsymbol ({\thinlinefont .})
\putrule from 15.240 16.510 to 17.780 16.510
%
%
\linethickness= 0.500pt
\setplotsymbol ({\thinlinefont .})
\setdashes < 0.1270cm>
\plot 10.160 16.510  6.858 22.225 /
%
%
\linethickness= 0.500pt
\setplotsymbol ({\thinlinefont .})
\plot 10.160 16.510 13.462 22.225 /
%
%
\linethickness= 0.500pt
\setplotsymbol ({\thinlinefont .})
\setsolid
\putrule from 12.764 20.993 to  7.563 20.993
%
%
\linethickness= 0.500pt
\setplotsymbol ({\thinlinefont .})
\plot  7.576 20.980  5.080 16.510 /
%
%
\linethickness= 0.500pt
\setplotsymbol ({\thinlinefont .})
\plot 12.744 20.974 15.240 16.510 /
%
%
\put{\SetFigFont{8}{14.4}{\rmdefault}{\mddefault}{\updefault}$T^-$} [lB] at  7.366 17.780
%
%
\put{\SetFigFont{8}{14.4}{\rmdefault}{\mddefault}{\updefault}$T^\bot$} [lB] at 10.065 19.082
%
%
\put{\SetFigFont{8}{14.4}{\rmdefault}{\mddefault}{\updefault}$T^+$} [lB] at 12.541 17.780
%
%
\put{\SetFigFont{8}{14.4}{\rmdefault}{\mddefault}{\updefault}$O$} [lB] at 9.9 15.9
%
%
\put{\SetFigFont{8}{14.4}{\rmdefault}{\mddefault}{\updefault}$[O,W^+)$} [lB] at 16.5 15.9
%
%
\put{\SetFigFont{8}{14.4}{\rmdefault}{\mddefault}{\updefault}$[O,W^-)$} [lB] at  1.6 15.9
%
%
\put{\SetFigFont{8}{14.4}{\rmdefault}{\mddefault}{\updefault}$W$} [lB] at  2.635 16.7
%
%
\put{\SetFigFont{8}{14.4}{\rmdefault}{\mddefault}{\updefault}$W$} [lB] at 17 16.7
\linethickness=0pt
\putrectangle corners at  2.515 22.250 and 17.805 15.767
\endpicture}}
\caption{}
\end{figure}

Let $\fS_n^W=\fS^+_n\cup \fS^-_n\cup\fS^\bot_n$.  Then 
\begin{equation}\label{freeeqn1} 
\Sigma\subset\bigcup_{T\in\fS^W_n}\Omega_T\,. 
\end{equation} 
The first step is to calculate the number of triangles in $\fS_n^W$. 
 
To do this, the number of possible choices for
$T^+\in\fS^+_n$ must be determined.  Refer to Figure \ref{freefig4} below. 
 
\refstepcounter{picture}
\begin{figure}[htbp]\label{freefig4}
\font\thinlinefont=cmr5
\begingroup\makeatletter\ifx\SetFigFont\undefined%
\gdef\SetFigFont#1#2#3#4#5{%
  \reset@font\fontsize{#1}{#2pt}%
  \fontfamily{#3}\fontseries{#4}\fontshape{#5}%
  \selectfont}%
\fi\endgroup%
\centerline{\mbox{\beginpicture
\setcoordinatesystem units <.300000cm,.300000cm>
\unitlength=.300000cm
\linethickness=1pt
\setplotsymbol ({\makebox(0,0)[l]{\tencirc\symbol{'160}}})
\setshadesymbol ({\thinlinefont .})
\setlinear
%
%
\linethickness= 0.500pt
\setplotsymbol ({\thinlinefont .})
\plot  3.810 15.240  2.709 13.335 /
\putrule from  2.709 13.335 to  4.911 13.335
\plot  4.911 13.335  3.810 15.240 /
%
%
\linethickness= 0.500pt
\setplotsymbol ({\thinlinefont .})
\plot  6.009 15.240  3.810 15.242 /
\plot  3.810 15.242  4.911 13.335 /
\plot  4.911 13.335  6.009 15.240 /
%
%
\linethickness= 0.500pt
\setplotsymbol ({\thinlinefont .})
\plot  7.110 13.337  6.011 15.242 /
\plot  6.011 15.242  4.911 13.335 /
\plot  4.911 13.335  7.110 13.337 /
%
%
\linethickness= 0.500pt
\setplotsymbol ({\thinlinefont .})
\plot  4.911 13.339  6.011 15.242 /
\putrule from  6.011 15.242 to  3.810 15.242
\plot  3.810 15.242  4.911 13.339 /
%
%
\linethickness= 0.500pt
\setplotsymbol ({\thinlinefont .})
\plot  3.812 15.244  6.011 15.242 /
\plot  6.011 15.242  4.911 17.149 /
\plot  4.911 17.149  3.812 15.244 /
%
%
\linethickness= 0.500pt
\setplotsymbol ({\thinlinefont .})
\plot  4.913 17.147  6.011 15.242 /
\plot  6.011 15.242  7.112 17.149 /
\plot  7.112 17.149  4.913 17.147 /
%
%
\linethickness= 0.500pt
\setplotsymbol ({\thinlinefont .})
\plot  7.112 17.145  6.011 15.242 /
\putrule from  6.011 15.242 to  8.213 15.242
\plot  8.213 15.242  7.112 17.145 /
%
%
\linethickness= 0.500pt
\setplotsymbol ({\thinlinefont .})
\plot  8.211 15.240  6.011 15.242 /
\plot  6.011 15.242  7.112 13.335 /
\plot  7.112 13.335  8.211 15.240 /
%
%
\linethickness= 0.500pt
\setplotsymbol ({\thinlinefont .})
\plot  7.112 17.145  4.913 17.147 /
\plot  4.913 17.147  6.013 15.240 /
\plot  6.013 15.240  7.112 17.145 /
%
%
\linethickness= 0.500pt
\setplotsymbol ({\thinlinefont .})
\plot  7.112 17.145  6.013 19.050 /
\plot  6.013 19.050  4.913 17.143 /
\plot  4.913 17.143  7.112 17.145 /
%
%
\linethickness= 0.500pt
\setplotsymbol ({\thinlinefont .})
\plot  7.112 17.145  8.213 19.048 /
\putrule from  8.213 19.048 to  6.011 19.048
\plot  6.011 19.048  7.112 17.145 /
%
%
\linethickness= 0.500pt
\setplotsymbol ({\thinlinefont .})
\plot  7.112 17.145  9.311 17.143 /
\plot  9.311 17.143  8.211 19.050 /
\plot  8.211 19.050  7.112 17.145 /
%
%
\linethickness= 0.500pt
\setplotsymbol ({\thinlinefont .})
\plot  7.112 17.145  8.211 15.240 /
\plot  8.211 15.240  9.311 17.147 /
\plot  9.311 17.147  7.112 17.145 /
%
%
\linethickness= 0.500pt
\setplotsymbol ({\thinlinefont .})
\plot  9.311 17.143  8.211 15.240 /
\putrule from  8.211 15.240 to 10.412 15.240
\plot 10.412 15.240  9.311 17.143 /
%
%
\linethickness= 0.500pt
\setplotsymbol ({\thinlinefont .})
\plot 10.410 15.238  8.211 15.240 /
\plot  8.211 15.240  9.311 13.333 /
\plot  9.311 13.333 10.410 15.238 /
%
%
\linethickness= 0.500pt
\setplotsymbol ({\thinlinefont .})
\plot  9.309 13.335  8.211 15.240 /
\plot  8.211 15.240  7.110 13.333 /
\plot  7.110 13.333  9.309 13.335 /
%
%
\linethickness= 0.500pt
\setplotsymbol ({\thinlinefont .})
\plot 10.410 15.238  9.311 17.143 /
\plot  9.311 17.143  8.211 15.236 /
\plot  8.211 15.236 10.410 15.238 /
%
%
\linethickness= 0.500pt
\setplotsymbol ({\thinlinefont .})
\plot 10.410 15.238 11.510 17.141 /
\putrule from 11.510 17.141 to  9.309 17.141
\plot  9.309 17.141 10.410 15.238 /
%
%
\linethickness= 0.500pt
\setplotsymbol ({\thinlinefont .})
\plot 10.410 15.238 12.609 15.236 /
\plot 12.609 15.236 11.508 17.143 /
\plot 11.508 17.143 10.410 15.238 /
%
%
\linethickness= 0.500pt
\setplotsymbol ({\thinlinefont .})
\plot 10.410 15.238 11.508 13.333 /
\plot 11.508 13.333 12.609 15.240 /
\plot 12.609 15.240 10.410 15.238 /
%
%
\linethickness= 0.500pt
\setplotsymbol ({\thinlinefont .})
\plot 10.410 15.238  9.309 13.335 /
\putrule from  9.309 13.335 to 11.510 13.335
\plot 11.510 13.335 10.410 15.238 /
%
%
\linethickness= 0.500pt
\setplotsymbol ({\thinlinefont .})
\plot 11.508 17.143 10.408 15.240 /
\putrule from 10.408 15.240 to 12.609 15.240
\plot 12.609 15.240 11.508 17.143 /
%
%
\linethickness= 0.500pt
\setplotsymbol ({\thinlinefont .})
\plot 13.708 17.145 11.508 17.147 /
\plot 11.508 17.147 12.609 15.240 /
\plot 12.609 15.240 13.708 17.145 /
%
%
\linethickness= 0.500pt
\setplotsymbol ({\thinlinefont .})
\plot 14.808 15.242 13.710 17.147 /
\plot 13.710 17.147 12.609 15.240 /
\plot 12.609 15.240 14.808 15.242 /
%
%
\linethickness= 0.500pt
\setplotsymbol ({\thinlinefont .})
\plot 13.710 13.337 14.810 15.240 /
\putrule from 14.810 15.240 to 12.609 15.240
\plot 12.609 15.240 13.710 13.337 /
%
%
\linethickness= 0.500pt
\setplotsymbol ({\thinlinefont .})
\plot 11.510 13.335 13.710 13.333 /
\plot 13.710 13.333 12.609 15.240 /
\plot 12.609 15.240 11.510 13.335 /
%
%
\linethickness= 0.500pt
\setplotsymbol ({\thinlinefont .})
\plot 15.909 13.335 14.810 15.240 /
\plot 14.810 15.240 13.710 13.333 /
\plot 13.710 13.333 15.909 13.335 /
%
%
\linethickness= 0.500pt
\setplotsymbol ({\thinlinefont .})
\plot 12.609 15.244 13.710 17.147 /
\putrule from 13.710 17.147 to 11.508 17.147
\plot 11.508 17.147 12.609 15.244 /
%
%
\linethickness= 0.500pt
\setplotsymbol ({\thinlinefont .})
\plot 11.510 17.149 13.710 17.147 /
\plot 13.710 17.147 12.609 19.054 /
\plot 12.609 19.054 11.510 17.149 /
%
%
\linethickness= 0.500pt
\setplotsymbol ({\thinlinefont .})
\plot 11.510 17.149 12.609 15.244 /
\plot 12.609 15.244 13.710 17.151 /
\plot 13.710 17.151 11.510 17.149 /
%
%
\linethickness= 0.500pt
\setplotsymbol ({\thinlinefont .})
\plot 11.510 17.149 10.410 15.246 /
\putrule from 10.410 15.246 to 12.611 15.246
\plot 12.611 15.246 11.510 17.149 /
%
%
\linethickness= 0.500pt
\setplotsymbol ({\thinlinefont .})
\plot 11.510 17.149  9.311 17.151 /
\plot  9.311 17.151 10.412 15.244 /
\plot 10.412 15.244 11.510 17.149 /
%
%
\linethickness= 0.500pt
\setplotsymbol ({\thinlinefont .})
\plot 11.510 17.149 10.412 19.054 /
\plot 10.412 19.054  9.311 17.147 /
\plot  9.311 17.147 11.510 17.149 /
%
%
\linethickness= 0.500pt
\setplotsymbol ({\thinlinefont .})
\plot 11.510 17.149 12.611 19.052 /
\putrule from 12.611 19.052 to 10.410 19.052
\plot 10.410 19.052 11.510 17.149 /
%
%
\linethickness= 0.500pt
\setplotsymbol ({\thinlinefont .})
\plot  9.311 17.147 11.510 17.145 /
\plot 11.510 17.145 10.410 19.052 /
\plot 10.410 19.052  9.311 17.147 /
%
%
\linethickness= 0.500pt
\setplotsymbol ({\thinlinefont .})
\plot  8.211 19.050  9.309 17.145 /
\plot  9.309 17.145 10.410 19.052 /
\plot 10.410 19.052  8.211 19.050 /
%
%
\linethickness= 0.500pt
\setplotsymbol ({\thinlinefont .})
\plot  9.309 17.145 10.410 19.048 /
\putrule from 10.410 19.048 to  8.208 19.048
\plot  8.208 19.048  9.309 17.145 /
%
%
\linethickness= 0.500pt
\setplotsymbol ({\thinlinefont .})
\plot 11.508 17.143 10.410 19.048 /
\plot 10.410 19.048  9.309 17.141 /
\plot  9.309 17.141 11.508 17.143 /
%
%
\linethickness= 0.500pt
\setplotsymbol ({\thinlinefont .})
\plot 12.609 19.046 10.410 19.048 /
\plot 10.410 19.048 11.510 17.141 /
\plot 11.510 17.141 12.609 19.046 /
%
%
\linethickness= 0.500pt
\setplotsymbol ({\thinlinefont .})
\plot  9.313 24.761  8.213 22.858 /
\putrule from  8.213 22.858 to 10.414 22.858
\plot 10.414 22.858  9.313 24.761 /
%
%
\linethickness= 0.500pt
\setplotsymbol ({\thinlinefont .})
\plot  2.716 13.350 10.312 26.490 /
%
%
\linethickness= 0.500pt
\setplotsymbol ({\thinlinefont .})
\plot  9.315 24.746 12.617 19.044 /
%
%
\linethickness= 0.500pt
\setplotsymbol ({\thinlinefont .})
\putrule from 14.605 13.335 to 21.431 13.335
%
%
\put{\SetFigFont{8}{14.4}{\rmdefault}{\mddefault}{\updefault}$[O,W^+)$} [lB] at 18 12.2
%
%
\put{\SetFigFont{8}{14.4}{\rmdefault}{\mddefault}{\updefault}$c_1$} [lB] at  3.5 13.80
%
%
\put{\SetFigFont{8}{14.4}{\rmdefault}{\mddefault}{\updefault}1} [lB] at  4.7 14.25
%
%
\put{\SetFigFont{8}{14.4}{\rmdefault}{\mddefault}{\updefault}1} [lB] at  5.8 13.8
%
%
\put{\SetFigFont{8}{14.4}{\rmdefault}{\mddefault}{\updefault}1} [lB] at  6.9 14.25
%
%
\put{\SetFigFont{8}{14.4}{\rmdefault}{\mddefault}{\updefault}1} [lB] at  7.99 13.8
%
%
\put{\SetFigFont{8}{14.4}{\rmdefault}{\mddefault}{\updefault}1} [lB] at  9.09 14.25
%
%
\put{\SetFigFont{8}{14.4}{\rmdefault}{\mddefault}{\updefault}1} [lB] at 10.18 13.8
%
%
\put{\SetFigFont{8}{14.4}{\rmdefault}{\mddefault}{\updefault}1} [lB] at 11.28 14.25
%
%
\put{\SetFigFont{8}{14.4}{\rmdefault}{\mddefault}{\updefault}1} [lB] at 12.37 13.8
%
%
\put{\SetFigFont{8}{14.4}{\rmdefault}{\mddefault}{\updefault}1} [lB] at 13.47 14.25
%
%
\put{\SetFigFont{8}{14.4}{\rmdefault}{\mddefault}{\updefault}1} [lB] at 14.56 13.8
%
%
\put{\SetFigFont{8}{14.4}{\rmdefault}{\mddefault}{\updefault}$c_2$} [lB] at  4.52 15.7
%
%
\put{\SetFigFont{8}{14.4}{\rmdefault}{\mddefault}{\updefault}2} [lB] at  5.8 16.1
%
%
\put{\SetFigFont{8}{14.4}{\rmdefault}{\mddefault}{\updefault}2} [lB] at  6.9 15.7
%
%
\put{\SetFigFont{8}{14.4}{\rmdefault}{\mddefault}{\updefault}2} [lB] at  7.99 16.1
%
%
\put{\SetFigFont{8}{14.4}{\rmdefault}{\mddefault}{\updefault}2} [lB] at  9.09 15.7
%
%
\put{\SetFigFont{8}{14.4}{\rmdefault}{\mddefault}{\updefault}2} [lB] at 10.18 16.1
%
%
\put{\SetFigFont{8}{14.4}{\rmdefault}{\mddefault}{\updefault}2} [lB] at 11.28 15.7
%
%
\put{\SetFigFont{8}{14.4}{\rmdefault}{\mddefault}{\updefault}2} [lB] at 12.37 16.1
%
%
\put{\SetFigFont{8}{14.4}{\rmdefault}{\mddefault}{\updefault}2} [lB] at 13.47 15.7
%
%
\put{\SetFigFont{8}{14.4}{\rmdefault}{\mddefault}{\updefault}$c_n$} [lB] at  8.95 23.3
%
%
\put{\SetFigFont{8}{14.4}{\rmdefault}{\mddefault}{\updefault}$O$} [lB] at  2.223 12.2
\linethickness=0pt
\putrectangle corners at  2.223 26.515 and 21.457 12.624
\endpicture}}
\hfil{}
\caption{}
\end{figure}

There are $(q+1)$ possible choices for the chamber $c_1$.  This 
choice then determines all the other chambers labeled 1 which 
lie in the convex hull of $c_1$ and $[0,W^+)$.  There are then 
$q$ choices for the chamber $c_2$.  This choice now determines all  
chambers labeled 2 which are in the convex hull of $c_2$ and 
all the other chambers previously determined. 
 
Continue in this way until the chamber $c_n$ is reached, thereby 
determining the whole triangle $T^+\in\fS_n^+$.  There are thus 
$(q+1)q^{n-1}$ choices for $T^+$. 
 
Now each triangle $T^+\in\fS_n^+$ determines a unique pair of triangles 
$T^-\in\fS_n^-$, $T^\bot\in\fS_n^\bot$ subject to the condition that 
$T^-$, $T^\bot$ lie in the convex hull of $\fS^+_n\cup W$.  Conversely  
such $T^-$ or $T^\bot$ determine $T^+$ uniquely.  Hence the sets $\fS^+_n$, 
$\fS_n^-$, $\fS_n^\bot$ have the same number of elements.  It follows 
that  
$$|\fS_n^W|=3(q+1)q^{n-1}\,.$$ 
 
Since the sets $\Omega_T$, $T\in\fS_n$ have equal measure and  
partition $\Omega$, it follows from Lemma \ref{free1} and  
equation (\ref{freeeqn1}) that 
\begin{eqnarray*} 
\nu_O(\Sigma) 
&\leq&\nu_O\left(\bigcup_{T\in\fS_n^W}\Omega_T\right)\\ 
&=&\frac{3(q+1)q^{n-1}}{(q^2+q+1)(q+1)q^{3n-3}}\\ 
&\to&0\qquad\hbox{as $n\to\infty$}\,. 
\end{eqnarray*} 
Thus $\nu_O(\Sigma)=0$. 
\qed

\end{document}